\def\R{{\mathbb R}}
\def\Z{{\mathbb Z}}
\def\N{{\mathbb N}}
\def\ds{\displaystyle}
\newtheorem{theorem}{Theorem}
\newtheorem{lemma}{Lemma}
\newtheorem{proposition}{Proposition}
\newtheorem{remark}{Remark}
\newtheorem{definition}{Definition}[section]
\newcommand{\const}{\operatorname{const}\nolimits}
\newcommand{\MAX}{\operatorname{MAX}\nolimits}
\newcommand{\Exp}{\operatorname{Exp}\nolimits}
\newcommand{\sn}{\operatorname{sn}\nolimits}
\newcommand{\ssc}{\operatorname{sc}\nolimits}
\newcommand{\cn}{\operatorname{cn}\nolimits}
\newcommand{\nc}{\operatorname{nc}\nolimits}
\newcommand{\sgn}{\operatorname{sgn}\nolimits}
\newcommand{\dc}{\operatorname{dc}\nolimits}
\newcommand{\dn}{\operatorname{dn}\nolimits}
\newcommand{\E}{\operatorname{E}\nolimits}
\newcommand{\K}{\operatorname{K}\nolimits}
\def\tcut{t_{\operatorname{cut}}}
\def\tsupr{t_{\operatorname{supr}}}
\def\tmax{t_{\MAX}}
\begin{document}

\title{\large Extremals in the Engel group with a sub-Lorentzian metric
\thanks{Supported by NSFC (No.11071119, No.11401531); NSFC-RFBR (No. 11311120055).}}
\author{\normalsize A. A. Ardentov$^1$, Tiren Huang$^2$, Yu. L. Sachkov$^1$,
Xiaoping Yang$^3$\\
\scriptsize 1 Program Systems Institute of the Russian Academy of Sciences, Pereslavl'-Zalesski\v{\i}, Russia\\
\scriptsize 2 Department of Mathematics, Zhejiang Sci-Tech
University, Hangzhou 310018, China\\
\scriptsize 3 Department of Applied Mathematics, Nanjing University
of Science $\&$ Technology, Nanjing 210094,China\\
\scriptsize E-mail: aaa@pereslavl.ru, htiren@ustc.edu.cn,
sachkov@sys.botik.ru, yangxp@njust.edu.cn.}
\date{}
\maketitle
 \fontsize{12}{22}\selectfont\small
\paragraph{Abstract:}
Let $E$ be the Engel  group  and $D$ be a rank 2 bracket generating left
invariant distribution with a Lorentzian metric, which is a
nondegenerate  metric of index 1. In this paper,  we first prove that timelike normal extremals are locally maximizing. Second, we obtain a parametrization of timelike, spacelike, lightlike normal extremal trajectories by Jacobi functions. Third, a discrete symmetry group and its fixed points which are Maxwell points of of timelike and spacelike normal extremals, are described. An estimate for the cut
time (the time of  loss of optimality) on extremal trajectories is derived on this basis.
\\[10pt]
\emph{Key Words}: Extremals, Engel Group, sub-Lorentzian metric.
\\[10pt]
\emph{Mathematics Subject Classification}(2010): 58E10, 53C50.

\section{\normalsize Introduction}
A sub-Riemannian structure on a manifold $M$ is given by a smoothly
varying distribution $D$ on $M$ and a smoothly varying positive
definite metric $g$ on the distribution. The triple $(M,D,g)$ is
called a \emph{sub-Riemannian manifold}, which has been applied in
control theory, quantum physics, C-R geometry and  other areas.
Some efforts have been made to generalize sub-Riemannian manifold.
One of them leads to the following question: what kind of
geometrical features the mentioned triple will have if we change the
positive definite metric to an indefinite nondegenerate metric?
It is natural to start with the Lorentzian metric of index 1. In
this case the triple: manifold, distribution and Lorentzian metric
on the distribution is called a \emph{sub-Lorentzian manifold} by
 analogy to a Lorentzian manifold. For details concerning the
\emph{sub-Lorentzian geometry}, the reader is referred to
\cite{M.Golubitsky3}.  To our knowledge, there are only few works
devoted to this subject (see \cite{Chang,M.Golubitsky3,M.Golubitsky4,
M.Golubitsky5,M.Golubitsky6,Korolko.A}). In \cite{Chang}, Chang,
Markina, and Vasiliev  systematically studied   geodesics in
an anti-de Sitter space with a sub-Lorentzian metric and a
sub-Riemannian metric respectively. In \cite{M.Golubitsky5},
Grochowski computed reachable sets starting from a point in the
Heisenberg sub-Lorentzian manifold on $\mathbb{R}^3$. It was shown
in \cite{Korolko.A} that the Heisenberg group $\mathbb{H}$ with a
Lorentzian metric on $\mathbb{R}^3$ possesses the uniqueness of
Hamiltonian geodesics of time-like or space-like type.

The Engel group was first named by Cartan \cite{Cartan} in 1901. It
is a prolongation of a three dimensional contact manifold, and is a
Goursat manifold. In \cite{Sachkov,Sachkov1,Sachkov2}  A. Ardentov and Y. Sachkov
computed minimizers on the sub-Riemannian Engel group. In
the present article, we study the Engel group furnished with a
sub-Lorentzian metric. This is an interesting example of
sub-Lorentzian manifolds, because the Engel group is the simplest
sub-Lorentzian manifold with nontrivial abnormal extremal
trajectories, and the vector distribution of the Engel group is not
$2-$generating, its growth vector is $(2,3,4)$.
 We first prove that    timelike normal extremals are locally maxiziming. Second, we  study the  normal extremals. By using the Pontryagin Maximum Principle, the subsystem
for costate variables of normal Hamiltonian system is reduced
to equations  similar to pendulum. Expressions of  timelike, spacelike and lightlike normal extremal trajectories are obtained in terms fo Jacobi functions. Third, a discrete symmetry group and its fixed points, which are Maxwell points, are described. An estimate for the cut
time (the time of  loss of optimality) on extremal trajectories is derived
on this basis.

The structure of this paper is as follows.
Section 2 contains some preliminaries as well as definitions of
sub-Lorentzian manifolds and the Engel group. In Section 3, we  prove that timelike normal extremals are locally maxiziming. In Section 4 we get a description of lightlike normal extremal trajectories. In Section 5, we find a parametrization of timelike normal extremal trajectories and describe the exponential map providing the parametrization for all the timelike normal extremal trajectories, then we describe symmetries of the exponential map and investigate the corresponding Maxwell points, which are fixed points of these symmetries. In Section 6,  a parametrization of spacelike normal extremal trajectories are obtained, and the exponential map providing parametrization for all the spacelike normal extremal trajectories is described, then we describe symmetries of the exponential map and investigate the corresponding Maxwell points, which are fixed points of these symmetries. On the basis of this study, we prove upper bounds on the cut time.

\section{Preliminaries}\label{sec:bd}
A sub-Lorentzian manifold is a triple $(M,D,g)$, where $M$ is a
smooth $n-$dimensional manifold, $D$ is a smooth distribution on $M$
and $g$ is a smoothly varying Lorentzian metric on $D$. For each point
$p\in M$, a vector $v\in D_p$ is said to be horizontal. An
absolutely continuous curve $\gamma(t)$ is said to be horizontal if
its derivative $\gamma'(t)$ exists almost everywhere and lies in
$D_{\gamma(t)}$.

A vector $v\in D_p$ is said to be time-like if $g(v,v)<0$;
space-like if $g(v,v)>0$ or $v=0$; null (lightlike) if $g(v,v)=0$ and
$v\neq 0$; and non-space-like if $g(v,v)\leq 0$. A curve is said to
be time-like if its tangent vector is time-like a.e.; similarly,
space-like, null, and non-space-like curves can be defined.

By a time orientation of $(M,D,g)$, we mean a continuous time-like
vector field on $M$. From now on, we assume that $(M,D,g)$ is
time-oriented. If $X$ is a time orientation on $(M,D,g)$, then a
non-space-like $v\in D_p$ is said to be future directed if
$g(v,X(p))<0$, and past directed if $g(v,X(p))>0$. Throughout this
paper, ``f.d." stands for ``future directed", ``t." for
``time-like", and ``nspc." for ``non-space-like".

Let $v,w\in D$ be two nspc. vectors, we have a following
reverse Schwarz-inequality \cite{O'Neill}:
\begin{equation}
|g(v,w)|\geq ||v||\cdot||w||,
\end{equation}
where $||v||=\sqrt{|g(v,v)|}$, the equality holds if and only if $v$ and $w$
are linearly dependent. If $v$ is a nonspacelike f.d. vector and $w$
is a nonspacelike p.d. vector, it is  easy  to see that $g(v,w)\geq
0$.

We introduce the space $H_{\gamma(t)}$ of horizontal nspc.\ curves:
$$H_{\gamma(t)}=\{\gamma:[0,1]\rightarrow M|g(\gamma'(t),\gamma'(t))\leq0,
\gamma'(t)\in D_{\gamma(t)} \hbox{ for almost all }t \in
[0,1]\}.$$ The sub-Lorentzian length of a horizontal nspc.\ curve
$\gamma(t)$ is defined as follows:
$$l(\gamma)=\int_0^1 \|\gamma'(t)\|dt,$$
where $\|\gamma'(t)\|=\sqrt{|g(\gamma'(t),\gamma'(t))|}$ is computed
using the Lorentzian metric on the horizontal spaces
$D_{\gamma(t)}$. We use the length to define the sub-Lorentzian
distance $d_U(q_1,q_2)$ with respect to a set $U\subset M$ between
two points $q_1,q_2,\in U$:

\[ d_U(q_1,q_2)=\bigg\{\begin{array}{ll} \sup\{l(\gamma), \gamma\in
H_U(q_1,q_2)\} & \hbox{if} \  \ H_U(q_1,q_2)\neq \emptyset \\ 0
&\hbox{if}\  \ H_U(q_1,q_2)=\emptyset,
\end{array}
\]
where $H_U(q_1,q_2)$ is the set of all nspc.f.d\ curves contained in
$U$ and joining $q_1$ and $q_2$.

A distribution $D\subset TM$ is called bracket generating if any
local frame $\{X_i\}_{1\leq i\leq r}$ for $D$, together with all of
its iterated Lie brackets $[X_i,X_j],[X_i,[X_j,X_k]],\cdots$ span
the tangent bundle $TM$. Bracket generating distributions are
 also called completely nonholonomic or distributions
satisfying H$\ddot{o}$rmander's condition.

\begin{theorem}(Rashevsky-Chow) Fix a point $q\in M$. If a distribution $D\subset TM$ is bracket
generating then the set of points that can be connected to $q$ by a
horizontal curve is the component of $M$ containing $q$.
\end{theorem}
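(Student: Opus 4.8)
The plan is to show that the reachable set $A_q$ (the set of points that can be joined to $q$ by a horizontal curve) is simultaneously open and relatively closed in the connected component of $M$ containing $q$. Since that component is connected and $A_q$ is nonempty (it contains $q$), this forces $A_q$ to coincide with the component. Note that the sub-Lorentzian metric plays no role here: the statement concerns only the distribution $D$ and horizontal curves, so the argument is metric-free.

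First I would observe that horizontal reachability is an equivalence relation. Because $D$ is a linear subbundle, every horizontal curve can be traversed backwards (if $\gamma'(t)\in D_{\gamma(t)}$ then the time-reversed curve is again horizontal), so reachability is symmetric; concatenation of horizontal curves gives transitivity, and the constant curve gives reflexivity. Hence $M$ is partitioned into reachable sets (``orbits''), and it suffices to prove that each orbit is open: an open partition of $M$ automatically consists of closed sets as well, since the complement of each orbit is the union of the remaining open orbits.

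The heart of the argument is this openness, where the bracket-generating hypothesis enters through the flow-commutator construction. Fix a local frame $X_1,\dots,X_r$ for $D$ near a point $p$ of the orbit, and write $\phi^{X}_t$ for the time-$t$ flow of a field $X$; motion along each such flow traces a horizontal curve. The classical identity
\[
\phi^{-Y}_{t}\circ\phi^{-X}_{t}\circ\phi^{Y}_{t}\circ\phi^{X}_{t}(p)=p+t^{2}[X,Y](p)+o(t^{2})
\]
shows that commutators of flows realize motion in the direction of Lie brackets, and by nesting such commutators one produces, for every iterated bracket $Y$ of the $X_i$ of length $w$, a composite horizontal flow $\Phi_Y(s)$ with $\Phi_Y(s)(p)=p+sY(p)+o(s)$. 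Since $D$ is bracket generating, I can select iterated brackets $Y_1,\dots,Y_n$ whose values $Y_1(p),\dots,Y_n(p)$ form a basis of $T_pM$ (in the Engel case the growth vector $(2,3,4)$ forces brackets up to length three to be used). Composing, I set
\[
\Phi(s_1,\dots,s_n)=\Phi_{Y_n}(s_n)\circ\cdots\circ\Phi_{Y_1}(s_1)(p),
\]
a map whose image lies in the orbit and whose differential at the origin sends the standard basis to $Y_1(p),\dots,Y_n(p)$, hence is invertible; by the inverse function theorem its image contains a neighbourhood of $p$, so the orbit is open.

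The main obstacle is the regularity of $\Phi$: realizing a weight-$w$ bracket requires a reparametrization of the form $s=t^{w}$, so the naive composition of flows is only H\"older rather than $C^1$, and the inverse function theorem does not apply verbatim. I would resolve this by the standard device of keeping the genuine flow-times as the independent variables and exhibiting $\Phi$ as an honestly smooth map of those times, then verifying that its rank at the base point equals $n$ by exploiting the graded (weighted) structure of the selected brackets; this makes the differential computation above rigorous. (Alternatively, one could invoke Sussmann's orbit theorem, which presents each orbit as an immersed submanifold whose tangent space contains all iterated brackets evaluated at the point, so that the bracket-generating condition immediately yields an open, full-dimensional orbit.) Once openness is established, the equivalence-relation argument of the first two paragraphs completes the proof.
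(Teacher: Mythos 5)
The paper does not prove this theorem: it is quoted as the classical Rashevsky--Chow theorem and used as a black box later on, so there is no in-paper argument to compare against. Your overall strategy is the standard one and its skeleton is sound: reachability by horizontal curves is an equivalence relation, the orbits partition $M$, an open partition automatically consists of relatively closed sets, and so everything reduces to openness of the orbit, which is exactly where bracket generation must enter.

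The one concrete flaw is in your proposed repair of the regularity problem. If you pass to the genuinely smooth map of the raw flow times, $\Psi(t_1,\dots,t_N)=\phi^{X_{i_N}}_{t_N}\circ\cdots\circ\phi^{X_{i_1}}_{t_1}(p)$, then its differential at the origin is $\partial\Psi/\partial t_j\big|_{0}=X_{i_j}(p)$, so the image of that differential lies in $D_p$ and the rank at the base point is at most $r=\operatorname{rank}D$, never $n$ unless $D=TM$ (for the Engel group it is $2$, not $4$). The brackets appear only in the higher-order terms of the Taylor expansion at the origin; no weighting or grading argument can make them contribute to the first differential there, so ``verifying that its rank at the base point equals $n$'' is not a step that can be carried out. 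The correct completion is either the inductive argument that the rank of such compositions becomes maximal at points arbitrarily close to the origin, followed by a return to $p$ using the reversibility of horizontal curves, or Sussmann's orbit theorem, which you mention parenthetically; the latter is a complete proof, so the write-up is salvageable, but the primary route as stated would fail at this step.
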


By the  Rashevsky-Chow Theorem, we know that if $D$ is bracket generating and
$M$ is connected, then any two points of $M$ can be joined by a
horizontal curve.

In the Lorentzian geometry   the lengh minimizer problem is trivial because arbitrary two points can be connected by a piecewise
lightlike curve of zero length. However, there do exist timelike curves with
maximal length which are timelike geodesics \cite{O'Neill}. Thus it is natural to study sub-Lorentzian length maximizers.

A nspc.\ curve is said to be a maximizer if it realizes the distance
between its endpoints. We also use the name $U$-extremal for a curve
in $U$ whose each suitably short sub-arc is a $U$-maximizer. A nspc.\ curve will be called geodesic if  it is a local maximizer.

The Hamilton function $H: T^*M\rightarrow M$ be defined as
\begin{equation}\label{Hamilton function}
H(x,\lambda)=-\frac12<\lambda, X_1(x)>^2+\frac12\sum_{i=2}^r<\lambda, X_i(x)>^2,\
\ (x,\lambda)\in T^*M.
\end{equation}

Let $U\subset M$ be an open neighborhood, $\varphi: U\subset
M\rightarrow R$ is a smooth function.
\begin{definition}
The horizontal gradient $\nabla_H\varphi$ of $\varphi$ is a smooth
horizontal vector field on $U$ such that for each $p\in U$ and $v\in
H$ we have $\partial_v\varphi(p)=g(\nabla_H\varphi(p),v).$
\end{definition}

Locally we can write
\begin{equation}\label{horizontal gradient}
\nabla_H\varphi=-(\partial_{X_1}\varphi)X_1+\sum_{i=2}^r(\partial_{X_i}\varphi)X_i
\end{equation}

\begin{definition}\label{normal extremals}
A normal extremal trajectory  in the sub-Lorentzian manifold $(M,D,g)$ is a curve $\gamma(t): [a,b]\rightarrow M$
that admits a lift $\Gamma : [a,b] \rightarrow T^*M $ which is a solution of the Hamiltonian system with the sub-Lorentzian Hamiltonian $H(x, \lambda)$. In this case, we say that  $\Gamma(t)$  is a normal lift of $\gamma(t)$.
\end{definition}

Now we introduce the Engel group $E$ with coordinates $q=(x_1,x_2,y,z)\in \mathbb{R}^4$. The group
law is denoted by $\odot$ and is defined as follows:
\begin{align}
&(x_1,x_2,y,z)\odot(x_1',x_2',y',z')\nonumber\\
&=\left(x_1+x_1',x_2+x_2',y+y'+\frac{x_1x_2'-x_1'x_2}{2},z+z'+\frac{x_2x_2'}{2}(x_2+x_2')+x_1y'+\frac{x_1x_2'}{2}(x_1+x_1')\right).\nonumber
\end{align}

A vector field $X$ on $E$ is said to be left-invariant if it satisfies
$dL_qX(e)=X(q)$, where $L_q$ denotes the left translation
$p\rightarrow L_q(p)=q\odot p$ and $e$ is the identity of $E$. This
definition implies that any left-invariant vector field on $E$ is a
linear combination with constant coefficients of the following vector fields:
\begin{align}\label{frame}
&X_1=\frac{\partial}{\partial x_1}-\frac {x_2}{2}
\frac{\partial}{\partial y};\ \  X_2=\frac{\partial}{\partial
x_2}+\frac {x_1} {2}\frac{\partial}{\partial
y}+\frac{x_1^2+x_2^2}{2}\frac{\partial}{\partial z};\nonumber\\
&X_3=\frac{\partial}{\partial y}+x_1\frac{\partial}{\partial z};\ \
\ \  X_4=\frac{\partial}{\partial z}.
\end{align}
The distribution $D=span\{X_1,X_2\}$ on $E$ satisfies the bracket
generating condition, since $ X_3=[X_1,X_2], \ X_4=[X_1,X_3].$ The
Engel group is a nilpotent Lie group, since $
[X_1,X_4]=[X_2,X_3]=[X_2,X_4]=0.$ We define a smooth Lorentzian
metric on $D$ by
\begin{align}\label{metric}
g(X_1,X_1)=-1,\ \ \ \  g(X_2,X_2)=1,\ \ \ \  g(X_1,X_2)=0.
\end{align}

\section{ Local maximizing property of normal extremals}
In this secton, we prove that timelike normal extremals are locally maximizing.

\begin{theorem}
Let $\gamma(t)$ be a t.f.d. normal extremal defined on an interval
$[a, b]$. Then for each $t\in[a,b]$, there exists a neighborhood $U$
of $\gamma(t)$ such that the restriction of $\gamma$ to $U$ is a
unique maximizer connecting its endpoints.
\end{theorem}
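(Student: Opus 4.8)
The plan is to prove the theorem by a \emph{calibration} (field of extremals) argument, the sub-Lorentzian analogue of the classical proof that timelike geodesics locally maximize Lorentzian length. Fix $t\in[a,b]$ and let $\Gamma(s)=(\gamma(s),\lambda(s))$ be the normal lift of $\gamma$ given by Definition \ref{normal extremals}; reparametrizing to unit speed we may assume $H(\gamma(s),\lambda(s))\equiv-\tfrac12$ along $\Gamma$, which corresponds to $g(\gamma'(s),\gamma'(s))\equiv-1$. The first and decisive step is to construct, on a sufficiently small neighborhood $U$ of $\gamma(t)$, a smooth solution $\varphi\colon U\to\R$ of the Hamilton--Jacobi (eikonal) equation $H(q,d\varphi(q))=-\tfrac12$, which for the Engel group reads $-(\partial_{X_1}\varphi)^2+(\partial_{X_2}\varphi)^2=-1$, and whose characteristic through $\gamma(t)$ is exactly $\gamma$. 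Concretely, I would choose a hypersurface $S$ through $\gamma(t)$ transverse to $\gamma'(t)$, prescribe Cauchy data on $S$ agreeing with $\lambda(t)$ on $TS$, and propagate by the Hamiltonian flow of $H$; since $\gamma$ is timelike we have $\gamma'(t)\neq0$, so the non-characteristic condition holds, and on a short enough arc the resulting Lagrangian manifold projects diffeomorphically onto $U$, making $\varphi$ well defined and smooth. A direct computation using \eqref{horizontal gradient} shows $\nabla_H\varphi|_{\gamma}=\gamma'$, so $\gamma$ is an integral curve of the field $\nabla_H\varphi$.

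By construction the field $\nabla_H\varphi=-(\partial_{X_1}\varphi)X_1+(\partial_{X_2}\varphi)X_2$ satisfies $g(\nabla_H\varphi,\nabla_H\varphi)=-1$ everywhere on $U$, hence is unit timelike; shrinking $U$ if necessary it is future directed, because it agrees with $\gamma'$ along $\gamma$ and future-directedness is an open condition. Let $\gamma|_{[s_1,s_2]}\subset U$ be the sub-arc through $\gamma(t)$, with endpoints $q_1=\gamma(s_1)$, $q_2=\gamma(s_2)$, and let $\tilde\gamma\in H_U(q_1,q_2)$ be any horizontal nspc.\ f.d.\ competitor. Combining the defining property of the horizontal gradient with the reverse Schwarz inequality applied to the two nspc.\ f.d.\ vectors $\nabla_H\varphi$ and $\tilde\gamma'$ (both future directed, so their inner product is negative) gives
\begin{equation*}
\frac{d}{ds}\varphi(\tilde\gamma(s))=g\bigl(\nabla_H\varphi(\tilde\gamma(s)),\tilde\gamma'(s)\bigr)\le-\|\nabla_H\varphi\|\cdot\|\tilde\gamma'(s)\|=-\|\tilde\gamma'(s)\|.
\end{equation*}
Integrating yields $l(\tilde\gamma)\le\varphi(q_1)-\varphi(q_2)$, a bound independent of the competitor. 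Along $\gamma$ itself $\gamma'$ is proportional to $\nabla_H\varphi$, so equality holds pointwise and $l(\gamma)=\varphi(q_1)-\varphi(q_2)$. Hence $l(\tilde\gamma)\le l(\gamma)$ for every competitor, so $\gamma$ realizes $d_U(q_1,q_2)$ and is a maximizer.

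For uniqueness, suppose $l(\tilde\gamma)=l(\gamma)$. Then equality must hold a.e.\ in the reverse Schwarz inequality, which by its equality case forces $\tilde\gamma'(s)$ to be linearly dependent on $\nabla_H\varphi(\tilde\gamma(s))$ for almost every $s$; thus $\tilde\gamma$ is, up to reparametrization, an integral curve of $\nabla_H\varphi$. Since integral curves of a smooth field are unique and the endpoints coincide, $\tilde\gamma$ coincides with $\gamma$ as a curve, giving the asserted uniqueness.

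The main obstacle is the first step: producing the calibration $\varphi$ and controlling its domain. One must verify the transversality (non-characteristic) condition for the Cauchy problem, which relies on $\gamma$ being timelike so that $\gamma'\neq0$ is transverse to a suitable $S$, and, more importantly, that the field of normal extremals emanating from $S$ covers a full neighborhood of $\gamma(t)$ diffeomorphically. The latter is precisely the absence of conjugate (focal) points along short sub-arcs, which is why the conclusion is only local; globally the exponential map may fail to be injective, and detecting this failure is exactly the purpose of the Maxwell-point and cut-time analysis carried out in the later sections.
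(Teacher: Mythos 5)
Your proposal is correct and follows essentially the same route as the paper: a transverse hypersurface through $\gamma(t)$ carrying Cauchy data matching $\lambda(t)$, propagation by the Hamiltonian flow to build a local solution $\varphi$ of $H(q,d\varphi)=-\tfrac12$ whose horizontal gradient is a unit timelike field containing $\gamma$ as an integral curve, and then the length comparison plus uniqueness via the equality case of the reverse Schwarz inequality. The only differences are sign conventions (the paper obtains $d\varphi=-\theta$ and $\nabla_H\varphi=-\dot\gamma_p$) and the level of detail in verifying $d\varphi=-\theta$, which the paper carries out explicitly via the constancy of $\langle\lambda_p(s),h(s)\rangle$ along the flow.
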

\begin{proof}
Let $\Gamma(t)=(\gamma(t),\lambda(t))\in T^*M$ be a normal lift of $\gamma(t)$,
then we can assume that $H(\gamma,\lambda)=-\frac12, \forall t\in(a,b)$. To show that
$\gamma$ is locally maximizing, we need only to prove that for every
$c\in (a,b)$, there exist $J=(t_1, t_2)\subset (a,b)$  a small interval of
$c$, such that the restriction of $\gamma(t)$ to $J$ is maximizer. Let
$p_c=\gamma(c)$ and $\lambda_c=\lambda(c)$. Pick a smooth
hypersurface $S\subset M$ of dimension $n-1$ in $M$ passing through
$p_c$ such that $\lambda_c$ vanishes on $T_{p_c}S$. Let $\bar
\lambda$ be a smooth one-form on an open  neighborhood $\Omega$ of
$p_c$ such that $\bar\lambda(p_c)=\lambda$ and $\bar \lambda (p)$
annihilates $T_pS$ and $H(p,\bar\lambda)=-\frac12$ for all $p\in
S\cap\Omega$.

Let $\Gamma_p=(\gamma_p,\lambda_p)$ be the solution of
$\dot\Gamma(t)=\overrightarrow{H}(\Gamma(t))$,
$\Gamma(c)=(p,\bar\lambda), \  p\in S\cap \Omega$. It is clear that
$\Gamma=\Gamma_{p_c}$. Since $\dot\gamma\notin T_{p_c}S$, by the
Implicit Function Theorem, we can get, if $\varepsilon > 0$ is
small enough and $W$ is a sufficiently small open neighborhood of
$p_c$ in $S$, that there exist a  map
\begin{align}
\nu: &(c-\varepsilon,c+\varepsilon)\times W\rightarrow M,\nonumber\\
     & (t,p)\rightarrow  \gamma_p(t),\nonumber
\end{align}
that maps $(c-\varepsilon, c+ \varepsilon)\times\ W$ diffeomorphically
onto an open neighborhood $U$ of $p_c$ in $M$. Define a smooth
function $\varphi: U\rightarrow R$ and a 1-form $\theta$ on $U$ by
letting $\varphi(x)=t$ and $\theta(x)=\lambda_p(t) $ if
$x=\gamma_p(t)$.

We will prove that $d\varphi = -\theta$. Let $X$ be a vector field
on $U$ such that
\begin{align}
X(\gamma(t))=d\pi_*(\Gamma_p(t))\overrightarrow{H}(\Gamma_p(t)),\nonumber
\end{align}
then we know that $\dot \gamma_p(t)=X(\gamma_p(t))$. Let
$\overrightarrow{H}_X$ be the Hamiltonian lift of $X$, we can show
that
\begin{align}
\overrightarrow{H}_X(\Gamma_p(t))=\overrightarrow{H}(\Gamma_p(t)),\quad
\forall p\in W, \quad t\in(c-\varepsilon,c+\varepsilon).\nonumber
\end{align}
 Indeed, let $a_i=(\theta(x),X_i(x))$ for $i = 1,..., k$, if
$x\in U$, then
\begin{equation}
X=-a_1X_1+\sum_{i=2}^ka_iX_i.\nonumber
\end{equation}
Since $H$ is constant along integral curves of $\overrightarrow{H}$, then
\begin{equation}
H(\Gamma_p(t))=-\frac12,\ \ \  \forall
 (t,p)\in(c-\varepsilon,c+\varepsilon)\times W,\nonumber
 \end{equation}
so we have that
\begin{equation}
-a_1^2(x)+\sum_{i=2}^ka_i^2(x) =-1,\ \
-a_1(x)\overrightarrow{a_1}(x)+\sum_{i=2}^ka_i(x)\overrightarrow{a_i}(x)=0,
\ \ \forall x\in U.
\end{equation}

Since
\begin{align}
\overrightarrow{H}(x,\lambda)=-\langle\lambda,
X_1\rangle\overrightarrow{H_{X_1}}(x,\lambda)+\sum_{i=2}^k\langle\lambda,
X_i\rangle\overrightarrow{H_{X_i}}(x,\lambda),
\end{align}
in particular
$$\overrightarrow{H}(\gamma_p(t),\lambda_p(t))=-a_1(\gamma_p(t))\overrightarrow{H}_{X_1}(\gamma_p(t),
\lambda_p(t))+\sum_{i=2}^ka_i(\gamma_p(t))\overrightarrow{H}_{X_i}(\gamma_p(t),\lambda_p(t)).$$

On the other hand,
\begin{align}
\overrightarrow{H}_X(x,\lambda)=-a_1(x)\overrightarrow{H}_{X_1}(x,\lambda)+\sum_{i=2}^ka_i(x)\overrightarrow{H}_{X_i}(x,\lambda)-
\langle\lambda,X_1\rangle\overrightarrow{a_1}(x)+\langle\lambda,X_i\rangle\overrightarrow{a_i}(x).\nonumber
\end{align}
Therefore
\begin{align}
\overrightarrow{H_X}(\gamma_p,\lambda_p)& = -a_1(\gamma_p)\overrightarrow{H}_{X_1}(\gamma_p,\lambda_p)+
\sum_{i=2}^ka_i(\gamma_p)\overrightarrow{H}_{X_i}(\gamma_p,\lambda_p)\nonumber
\\&\qquad-
\langle\lambda_p,X_1(\gamma_p)\rangle\overrightarrow{a_1}(\gamma_p)+\sum_{i=2}^k\langle\lambda_p,X_i(\gamma_p)\rangle\overrightarrow{a_i}(\gamma_p)\nonumber\\
&=a_1(\gamma_p)\overrightarrow{H}_{X_1}(\gamma_p,\lambda_p)+
\sum_{i=2}^ka_i(\gamma_p)\overrightarrow{H}_{X_i}(\gamma_p,\lambda_p)\nonumber\\
&\qquad-
a_1(\gamma_p)\overrightarrow{a_1}(\gamma_p)+\sum_{i=2}^ka_i(\gamma_p)\overrightarrow{a_i}(\gamma_p)\nonumber\\
&= a_1(\gamma_p)\overrightarrow{H}_{X_1}(\gamma_p,\lambda_p)+
\sum_{i=2}^ka_i(\gamma_p)\overrightarrow{H}_{X_i}(\gamma_p,\lambda_p)\nonumber\\
&= \overrightarrow{H}(\gamma_p,\lambda_p).
\end{align}
So, for $p\in W$, $\Gamma_p$ is also the solution of
$$\dot\Gamma(t)=\overrightarrow{H}_X(\Gamma(t)),\ \  \Gamma(c)=(p,\bar\lambda(p)).$$

Let $\phi_t$ be the flow of $X$ on $U$ for
$t\in(c-\varepsilon,c+\varepsilon)$ and let $\phi_c(p)=p$ for $p\in U$.
For $p\in W$, we have $\gamma_p(t)=\phi_t(p)$.

For each $t\in(c-\varepsilon,c+\varepsilon)$, let $W_t=\{x\in U:
\varphi(x)=t\}$. Then $W_t$ is a smooth hypersurface, and
$W_t=\phi_t(W)$ and $T_xW_t=d\phi_t(p)T_pW$ for $x=\gamma_p(t)$.

We now show that $ d\varphi=-\theta$ on $U$. Let $x\in U$ and $p\in
W, t\in(c-\varepsilon,c+\varepsilon)$, be such that $x=\gamma_p(t)$,
then $x\in W_t$. Let $w\in T_xM$ and $v\in T_pM$  be such than
$w=d\phi_t(p)v$. Define $h(s)=d\phi_s(p)v$, then $h(s)\in
T_{\gamma_p(s)}W_s$ and $h(t)=w$. It is easy to see that $h(s)$
satisfies the following equation:
$$\dot h(s)=\frac{\partial X}{\partial x}(\gamma_p(s))h.$$
Since $\lambda_p$ satisfies the Hamiltonian equations, so
$$\dot \lambda_p(s)=-\frac{\partial H_X}{\partial x}(\gamma_p(s))=-\lambda_p\frac{\partial X}{\partial x}(\gamma_p(s)),$$
hence the fucntion $s\mapsto \langle\lambda_p(s),h(s)\rangle$ is constant on
$(c-\varepsilon,c+\varepsilon)$.

If $w\in T_xW_t$, then $v\in T_pW$. Since $\varphi(x)$ is constant
on $W_t$, then $\langle d\varphi(x),w\rangle=0$. Since $\langle\lambda_p(c),v\rangle=0$, we have
that
\begin{equation}\label{1}
\langle\theta(x),w\rangle=\langle\lambda_p(t),h(t)\rangle=\langle\lambda_p(c),h(c)\rangle=\langle\lambda_p(c),v\rangle=0=-\langle d\varphi(x),w\rangle.\end{equation}
If $w=\dot\gamma_p(t)$, then $v=\dot\gamma_p(c)$. Since
$\langle d\varphi(x),\dot\gamma_p(t)\rangle=1$, we have that

\begin{equation}\label{2}
\langle\theta(x),w\rangle=\langle\lambda_p(t),h(t)\rangle=\langle\lambda_p(c),h(c)\rangle=-1=-\langle d\varphi(x),w\rangle.
\end{equation}
Since $T_xM=T_xW_t\oplus\{\dot\gamma_p(t)\}$, we conclude that
$\theta(x)=-d\varphi(x)$.

By (\ref{horizontal gradient}), (\ref{1}) and (\ref{2}), we can get
that
\begin{equation}
\nabla_H\varphi(t)=-\dot\gamma_p(t).
\end{equation}
Since $\gamma(t)$ is a t.f.d curve, $\nabla_H\varphi(t)$ is a p.d.
vector field on $U$.

Since $H(x, \theta(x))=-\frac12$ and $||\theta(x)||=-1$, we have
shown that $\varphi$ satisfies the Hamiltonian equation $H(x,
d\varphi(x))=-\frac12$ and $||d\varphi(x)||=-1$.

Next, choose $t_1$ and $t_2$ in the interval $(c-\varepsilon,c+\varepsilon)$.
Now, let $\eta: [0,\alpha]$ be a t.f.d. curve in $U$ with
$\eta(0)=\gamma(t_1)$ and $\eta(\alpha)=\gamma(t_2)$, then we  have
\begin{align}
L(\gamma|_{[t_1,t_2]})&=t_2-t_1=\varphi(\gamma(t_2))-\varphi(\gamma(t_1))\nonumber\\
&=\varphi(\eta(0))-\varphi(\eta(\alpha))=\int_0^\alpha(d\varphi(\eta(s)))ds=\int_0^\alpha g(\dot\eta,\nabla_h\varphi)\nonumber\\
&\geq \int_0^\alpha||\dot\eta(s)||ds=L(\eta|_{[0,\alpha]}).
\end{align}
By the reverse Schwarz inequality, $L(\gamma)=L(\eta)$ holds if and only if $\eta$
can be reparameterized as a trajectory of $-\nabla_H\varphi$. This
completes the proof.
\end{proof}

\begin{remark}
Let $\gamma(t)$ be a t.p.d. normal extremal. We also can prove that $\gamma(t)$ is a the unique
 local maximizer by the above method.
\end{remark}

\section{Sub-Lorentzian extremal trajectories}
In this section, we calculate sub-Lorentzian extremal trajectories  by applying the Pontryagin  maximum  principle.
We can assume that the
initial point is the origin by invariance under left translations of the Engel group, i.e., $x_1(0)=x_2(0)=y(0)=z(0)=0.$
 Let us introduce the
vector of costate variables $\xi=(\xi_0,\xi_1,\xi_2,\xi_3,\xi_4)$
and define the Hamiltonian function
\begin{align}
H(\xi,q(t),u)=\xi_0\frac{-u^2_1+u^2_2}{2}+\xi_1u_1+\xi_2u_2+\xi_3\frac{x_1u_2-x_2u_1}{2}+\xi_4\frac{x_1^2+x_2^2}{2}u_2.
\end{align}
From the Pontryagin maximum principle for this Hamiltonian function we
obtain a Hamiltonian system for the costate variables:
\begin{align}
\dot \xi_1=-H_{x_1}=-\frac{\xi_3u_2}{2}-\xi_4x_1u_2,\quad
\dot\xi=-H_{x_2}=\frac{\xi_3u_1}{2}-\xi_4x_2u_2,\quad
\dot\xi_3=\dot\xi_4=0,
\end{align}
the maximality condition
\begin{align}\label{max pri}
\max_{u\in\mathbb{R}^2}H(\xi(t),{q}(t),{u}(t))=H(\xi(t),\hat{q}(t),\hat{u}(t)),\
\ \xi_0\leq 0,
\end{align}
where $\hat{u}(t),\hat{q}(t)$ is the optimal process, and the
condition $\xi(t)\neq 0$ of  nontriviality of the costate
variables.

For sub-Lorentzian extremal trajectories, there are abnormal extremal trajectories (i.e. $\xi_0=0$) and normal extremal trajectories (i.e. $\xi_0=-1$).
Abnormal extremal trajectories were described in \cite{Sachkov,Cai}

Now we look at the normal case $\xi_0=-1$.  It follows from the
 maximality condition (\ref{max pri}) that
$H_{u_1}=H_{u_2}=0$. Hence
\begin{align}
u_1=-(\xi_1-\frac{x_2\xi_3}{2}),\quad
u_2=\xi_2+\frac{\xi_3x_1}{2}+\frac{\xi_4(x_1^2+x_2^2)}{2}.
\end{align}

Let $h_i=\langle \xi,X_i\rangle,i=1, \cdots,4,$ be the Hamiltonians corresponding to
the basis vector fields $X_1,X_2,X_3,X_4$ in the tangent space $T_qE$ and
linear on the fibres of the cotangent space $T_q^*E$:
\begin{align}
h_1=\xi_1-\frac{x_2}{2}\xi_3,\quad
h_2=\xi_2+\frac{x_1}{2}\xi_3+\frac{x_1^2+x_2^2}{2}\xi_4,\quad
h_3={\xi_3}+x_1\xi_4,\quad
h_4=\xi_4.
\end{align}
So $u_1=-h_1$ and $u_2=h_2$.

According to the Pontryagin maximum principle, we get the following
Hamilton equations in the new coordinates:

\begin{align}
&\dot x_1 =\frac{\partial H}{\partial \xi_1}=-(\xi_1-\frac{x_2}{2}\xi_3)=-h_1,\label{ham1}\\
&\dot x_2=\frac{\partial H}{\partial \xi_2}=\xi_2+\frac{x_1}{2}\xi_3+\frac{x_1^2+x_2^2}{2}\xi_4=h_2, \\
&\dot y=\frac{\partial H}{\partial
\xi_3}=h_1\frac{x_2}{2}+h_2\frac{x_1}{2}=\frac 1
2(x_1h_2+x_2h_1),\\
&\dot z=\frac{\partial H}{\partial \xi_4}=\frac{x_1^2+x_2^2}{2}h_2,\\
&\dot h_1=\dot\xi_1-\frac{\dot x_2}{2}\xi_3=-h_2h_3,\\
&\dot h_2=\dot \xi_2+\frac{\dot x_2}{2}\xi_3+\xi_4(x_1\dot x_1+x_2\dot x_2)=-h_1h_3,\\
&\dot h_3=\dot x_1\xi_4=-h_1h_4,\\
&\dot h_4=0.\label{ham2}
\end{align}

Associated with the expression of $H$, we conclude that a sub-Lorentzian extremal is timelike
if $H < 0$; spacelike if $H > 0$; lightlike if $H = 0$.

For the lightlike case, by the definition, we have $H=\frac{1}{2}(-h_{1}^{2}+h_{2}^{2})=0$,
thus $h_{2}=\pm h_{1}.$ If $h_{2}=h_{1}$, then lightlike trajectories
satisfy the ODE:
$$\dot{\gamma}=-h_{1}(X_{1}-X_{2}), $$
that is, they are reparameterizations of the one-parametric subgroup of
the field $X_{1}-X_{2}$. We assume $\dot{\gamma}=X_{1}-X_{2}$, so
\begin{align}
\dot{x}_{1}=1,\quad \dot{x}_{2}=-1,\quad \dot{y}=-\frac{1}{2}(x_{1}+x_{2}),\quad
\dot{z}=-\frac{1}{2}(x_{1}^{2}+x_{2}^{2}),\nonumber
\end{align}
thus
\begin{align}
x_{1}=t,\quad x_{2}=-t,\quad y=0,\quad z=-\frac{1}{3}t^{3}.\nonumber
\end{align}
If $h_{2}=-h_{1}$, similarly, we obtain
\begin{align}
x_{1}=t,\quad x_{2}=t,\quad y=0,\quad z=\frac{1}{3}t^{3}.\nonumber
\end{align}
In conclusion, we get the following theorem:
\begin{theorem}
Lightlike horizontal extremal trajectories starting from origin
 are reparameterizations of the curves:
\begin{align}
x_{1}=t,\quad x_{2}=\pm t,\quad y=0,\quad z=\pm\frac{1}{3}t^{3}.\nonumber
\end{align}
\end{theorem}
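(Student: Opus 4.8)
The plan is to read off the geometry of the null constraint directly from the Hamiltonian system (\ref{ham1})--(\ref{ham2}) and then integrate explicitly. First I would impose the lightlike condition $H=\tfrac12(-h_1^2+h_2^2)=0$, which forces $h_2=\pm h_1$. Before integrating I would verify that each sign relation is preserved by the flow: since $\dot h_1=-h_2h_3$ and $\dot h_2=-h_1h_3$, substituting $h_2=\pm h_1$ gives $\dot h_1=\mp h_1h_3=\pm\dot h_2$, so that $h_2\mp h_1\equiv 0$ along the trajectory. Hence the two branches are invariant under the costate dynamics and may be treated independently.

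Next I would compute the horizontal velocity on each branch. Using $\dot x_1=-h_1$ and $\dot x_2=h_2$, the branch $h_2=h_1$ yields $\dot\gamma=-h_1X_1+h_1X_2=-h_1(X_1-X_2)$, while the branch $h_2=-h_1$ yields $\dot\gamma=-h_1(X_1+X_2)$. In both cases the velocity is a scalar multiple of a fixed left-invariant field, so each trajectory is an integral curve of $X_1\mp X_2$ run at the variable speed $-h_1(t)$. Because $h_1$ satisfies the homogeneous linear equation $\dot h_1=-h_1h_3$ on the relevant branch, its sign is constant, so $-h_1$ never vanishes and the speed never reverses. The decisive observation is that these curves are null, hence of zero sub-Lorentzian length, so there is no distinguished arc-length parameter and I am free to reparameterize; I would rescale time so that $\dot\gamma=X_1-X_2$ (respectively $\dot\gamma=X_1+X_2$), with the sign fixed by requiring $\dot x_1=1$.

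It then remains to integrate the two one-parameter subgroups from the origin. From (\ref{frame}), the field $X_1-X_2$ gives $\dot x_1=1$, $\dot x_2=-1$, $\dot y=-\tfrac12(x_1+x_2)$, $\dot z=-\tfrac12(x_1^2+x_2^2)$; substituting $x_1=t$, $x_2=-t$ makes $x_1+x_2=0$ and $x_1^2+x_2^2=2t^2$, so $y\equiv0$ and $z=-\tfrac13t^3$. The field $X_1+X_2$ is handled the same way and produces $x_1=x_2=t$, $y\equiv0$, $z=\tfrac13t^3$. These are exactly the two families in the statement.

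The main obstacle is justifying the reparameterization in the second step: one must check that $t\mapsto\int_0^t(-h_1(s))\,ds$ is a genuine monotone change of parameter, so that the varying speed $-h_1(t)$ can be normalized away without altering the trace of the curve. This is precisely where the null (zero-length) character of the trajectories is essential---for timelike or spacelike extremals the parameter is pinned down by arc length and one cannot rescale freely---but here it follows at once from the constancy of the sign of $-h_1$ established above.
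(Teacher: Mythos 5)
Your proof is correct and takes essentially the same route as the paper: impose $H=\tfrac12(-h_1^2+h_2^2)=0$ to get $h_2=\pm h_1$, observe that $\dot\gamma$ is then a scalar multiple of the left-invariant field $X_1\mp X_2$, and integrate the resulting one-parameter subgroup after normalizing the speed. The only difference is that you supply the easy verifications the paper leaves implicit --- the invariance of each branch $h_2=\pm h_1$ under the costate equations and the constancy of the sign of $h_1$, which justifies the reparameterization.
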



Timelike and spacelike normal extremal trajectories are considered in the following two sections to discuss them.

\section{Timelike normal extremal trajectories }
In the timelike case $H= \frac{1}{2}\big(-h_1^2+h_2^2\big)<0$, we consider extremals on the level surface $\{H=-1/2\}$ and introduce coordinates $(\theta, c, \alpha)$ on this surface as follows:
\begin{align*}
&h_1 = \pm \cosh \theta,\ \ \  \ h_2 = \sinh \theta, \qquad h_3=c,\ \ \  \ h_4=\alpha.
\end{align*}

Notice that we have the following symmetry of the Hamiltonian system~(\ref{ham1})--(\ref{ham2}):
\begin{align}
\varepsilon^0 \colon (h_1, h_2, h_3, h_4, x_1, x_2, y, z) \mapsto (-h_1, h_2, -h_3, h_4, -x_1, x_2, -y, z). \label{tleps0}
\end{align}
So we consider case $h_1 = \cosh \theta >0$ without loss of generality in the sequel.

In the variables $(\theta, c, \alpha, x_1, x_2, y, z)$ on the level surface $\{H=-1/2\}$ the Hamiltonian system~(\ref{ham1})--(\ref{ham2}) assumes the following form:
\begin{align}
&\dot{x}_1 = - \cosh \theta, \label{dotx1t}\\
&\dot{x}_2 = \sinh \theta, \label{dotx2t}\\
&\dot{y} = \frac{x_2  \cosh \theta + x_1 \sinh \theta}{2}, \label{dotyt}\\
&\dot{z} = \frac{x_1^2+x_2^2}{2}\sinh \theta, \label{dotzt}\\
&\dot{\theta} = -  c, \label{dottht}\\
&\dot{c} = - \alpha \cosh \theta, \label{dotct}\\
&\dot{\alpha} = 0. \label{dotat}
\end{align}

Note that the subsystem for the costate variables reduces to the equations
\begin{align}
&\ddot{\theta} = \alpha \cosh \theta, \qquad \dot{\alpha} = 0, \label{pend}
\end{align}
whose phase portrait for $\alpha = 1$ and $\alpha = -1$ is given in Fig.~\ref{pendulum+-}.

\begin{figure}[htbp]
\centering
\includegraphics[width=0.47\linewidth]{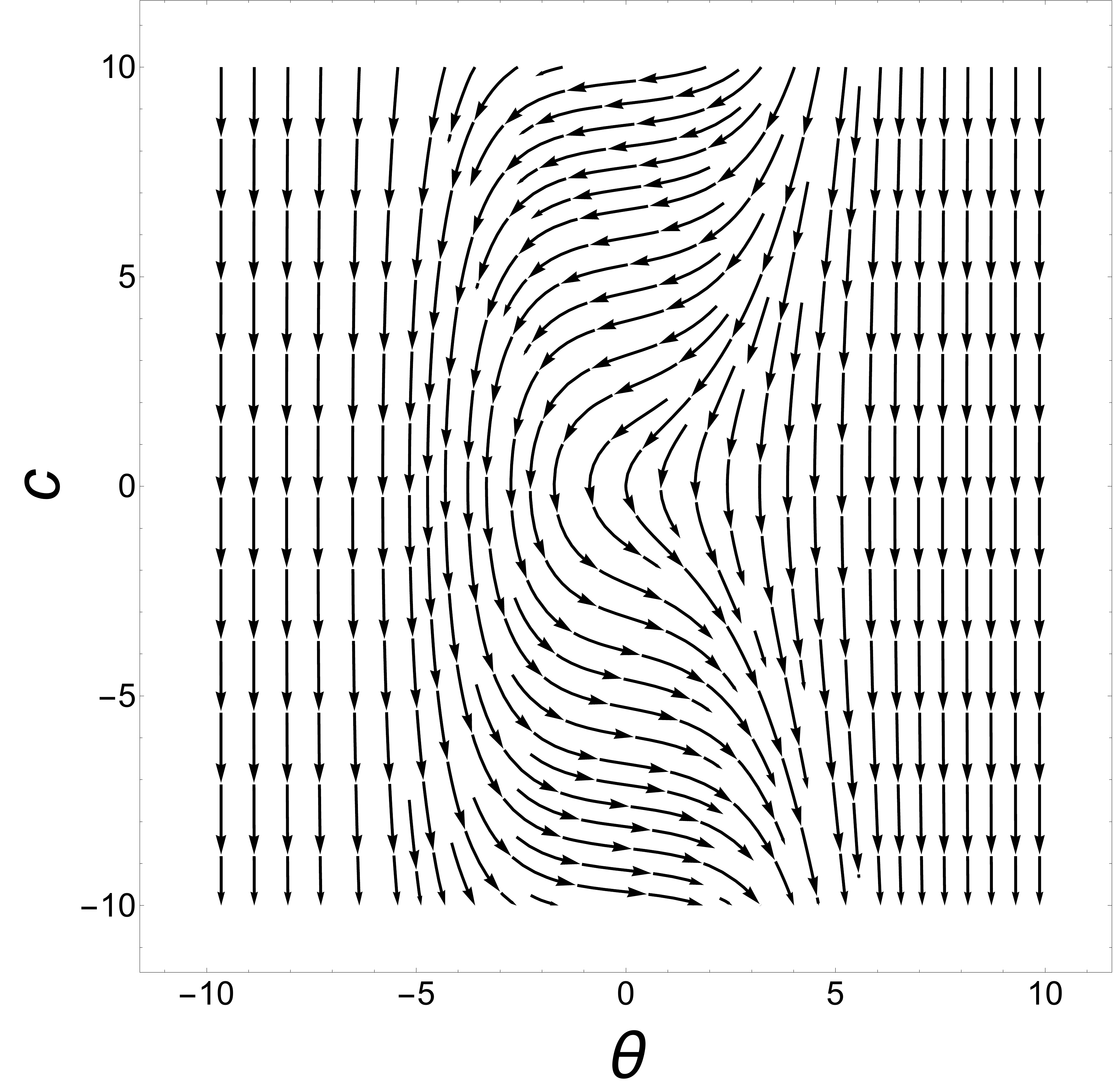}
\qquad
\includegraphics[width=0.47\linewidth]{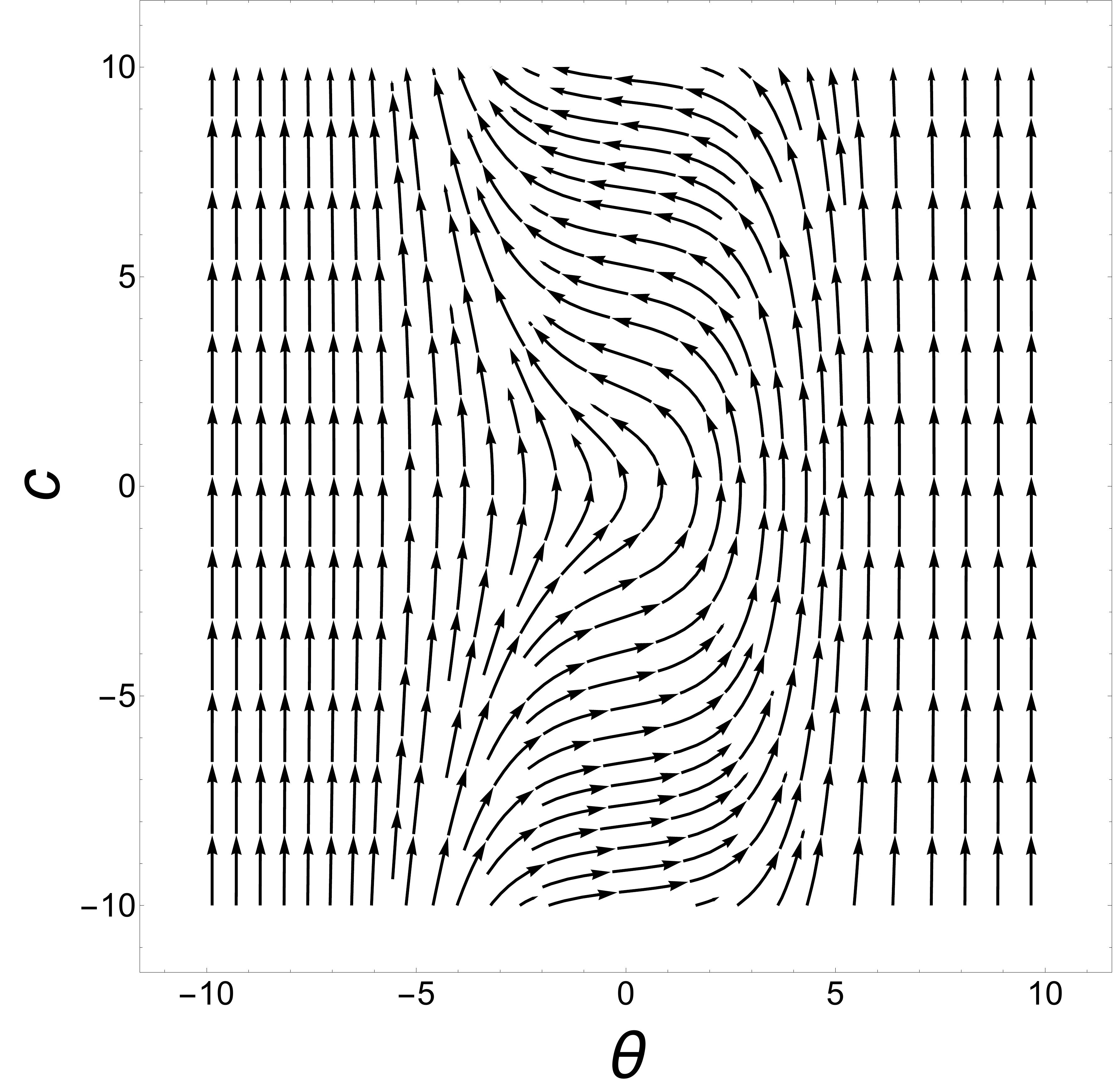}
\caption{
Phase portrait of vertical subsystem~(\ref{pend}) for $C^+$ and $C^-$ (timelike case)
}\label{pendulum+-}
\end{figure}
Let us introduce the energy integral:
\begin{align}
&E = \frac{h_3^2}{2} - h_2 h_4 = \frac{c^2}{2} - \alpha \sinh \theta, \qquad \dot{E} = h_3 \dot{h}_3 - h_4 \dot{h}_2 = 0.
\end{align}
The set of all normal extremal paths is parametrized by points of the set
\begin{align*}
C&=T_{q_0}^* M \cap \{H=-1/2, h_1 >0\}=\big\{(h_1, h_2, h_3, h_4) \in \R^4 ~ | ~ h_1^2-h_2^2=1, h_1 > 0\big\}\\
&=\big\{(\theta, c, \alpha) \in \R^3\big\}.
\end{align*}

The set $C$ has the following decomposition into invariant sets of equations~(\ref{dottht})--(\ref{dotat}):
\begin{align*}
C &= C_0^0 \cup C^0 \cup C^+ \cup C^-, \quad \lambda = (\theta, c, \alpha), \\
C_0^0&= \{\lambda \in C ~ | ~ c = 0, \alpha = 0\}, \\
C^0 &= \{\lambda \in C ~ | ~ c \neq 0, \alpha = 0\}, \\
C^+ &= \{\lambda \in C ~ | ~ \alpha > 0\}, \\
C^- &= \{\lambda \in C ~ | ~ \alpha < 0\}.
\end{align*}

Consider the general case $\alpha \neq 0$. Trajectories of vertical subsystem~(\ref{dottht})--(\ref{dotat}) for $C^+$ and $C^-$ are symmetric (see~Fig.~\ref{pendulum+-}). This symmetry (see~Section~\ref{symm}) has the following description:
\begin{align}
\varepsilon^1 \colon (\alpha, c, \theta, x_1, x_2, y, z) \mapsto (-\alpha, -c, -\theta, x_1, -x_2, -y, -z). \label{tleps1}
\end{align}

Thus, it is enough to integrate the Hamiltonian system in the case $C^+$ due to the symmetry $\varepsilon^1$. We introduce new coordinates  $(\varphi, E, \alpha)$ in the subset $C^+$ which rectify the vertical subsystem~(\ref{dottht})--(\ref{dotat}):
\begin{align*}
&\ae=\sqrt{\frac{\sqrt{E^2 + \alpha^2}}{2}}, \qquad &&k^2 = \frac{1}{2} + \frac{E}{4 \ae^2} \in (0,1), \\
&c = -2 \ae \ssc (\ae \varphi) \dn (\ae \varphi), \qquad &&\varphi \in \Big(-\frac{\K}{\ae}, \frac{\K}{\ae}\Big),\\
&\sinh \theta = \frac{2\ae^2 \Big(1-k^2\big(1+\cn^4 (\ae \varphi)\big)\Big)}{\alpha \cn^2(\ae \varphi)},  \qquad &&\cosh \theta = \frac{2 \ae^2 \Big(1 - k^2 \big(1 - \cn^4 (\ae \varphi)\big) \Big)}{\alpha \cn^2 (\ae \varphi)},
\end{align*}
where $\sn \psi, \cn \psi , \dn \psi$ are elliptic Jacobi functions with modulus $k$, and $\ds \ssc \psi= \frac{\sn \psi}{\cn \psi}$; $\ds \K (k) =\int_0^{\frac{\pi}{2}} \frac{ d t }{\sqrt{1-k^2 \sin^2 t}}$ is the complete elliptic integral of the first kind.

Immediate differentiation shows that in these coordinates the subsystem for the
costate variables~(\ref{dottht})--(\ref{dotat}) takes the following form:
\begin{align}
\dot{\varphi}=1, \qquad \dot{E} = 0, \qquad  \dot{\alpha}=0,
\end{align}
so that it has the solutions
\begin{align}
\varphi(t)=\varphi_t=\varphi_0+t, \qquad E = \const, \qquad \alpha=\const.
\end{align}

\subsection{Exponential mapping for timelike normal extremals }
Denote arguments of Jacobi functions $\psi_0 = \ae \varphi_0, \ \psi_t = \ae \varphi_t \in(-\K, \K)$ to describe the exponential mapping in general case $\alpha \neq 0$:
\begin{align}
x_1 (t) &= \frac{2 \ae}{|\alpha|} (\ssc \psi_0 \dn \psi_0 - \ssc \psi_t \dn \psi_t), \label{expx1}\\
x_2(t)&=\frac{4 \ae \Big(\ae (1-k^2) t - \big(\E(\psi_t)-\E(\psi_0)\big)\Big)-|\alpha|x_1(t)}{\alpha}. \\
y (t) &= -\frac{2 \ae^2}{\alpha |\alpha|}
   \Big(k^2 (\cn^2 \psi_t - \cn^2 \psi_0 ) + (1-k^2) (\nc^2 \psi_t-\nc^2 \psi_0)\Big)\nonumber\\
	&\qquad +\frac{\ae}{|\alpha|} \Big(\dn \psi_0 \ssc \psi_0 + \dn \psi_t \ssc \psi_t\Big) x_2(t), \label{expy} \\
z (t) &= \frac{\big(x_2(t)\big)^3}{6} + \frac{4 \ae^3}{3 \alpha^3} \bigg(2 \ae (k^2-1) t+(1-k^2) \Big(\frac{\dn \psi_t \sn \psi_t}{\cn^3 \psi_t}-\frac{\dn \psi_0 \sn \psi_0}{\cn^3 \psi_0}\Big)\nonumber\\
&\qquad+k^2 (\cn \psi_t \dn \psi_t \sn \psi_t-\cn \psi_0 \dn \psi_0 \sn \psi_0)-2 \big(\E(\psi_t)-\E(\psi_0)\big) (2 k^2-1)\bigg)\nonumber\\
&\qquad-\frac{2 \ae \dn \psi_0 \sn \psi_0 \Big(\frac{\ae \dn \psi_0 \ssc \psi_0}{|\alpha|} x_2(t) - \frac{1}{2} x_1(t) x_2(t)-y(t)\Big)}{\cn \psi_0 |\alpha|}\nonumber\\
&\qquad-\frac{2 \ae^2 (2 k^2-1) x_1(t)}{3 \alpha |\alpha|}, \label{expz}
\end{align}
where $\ds \E (\psi) = \int_0^\psi \dn^2 t\ d t$.

\begin{figure}[htbp]
\centering
\includegraphics[height=7cm]{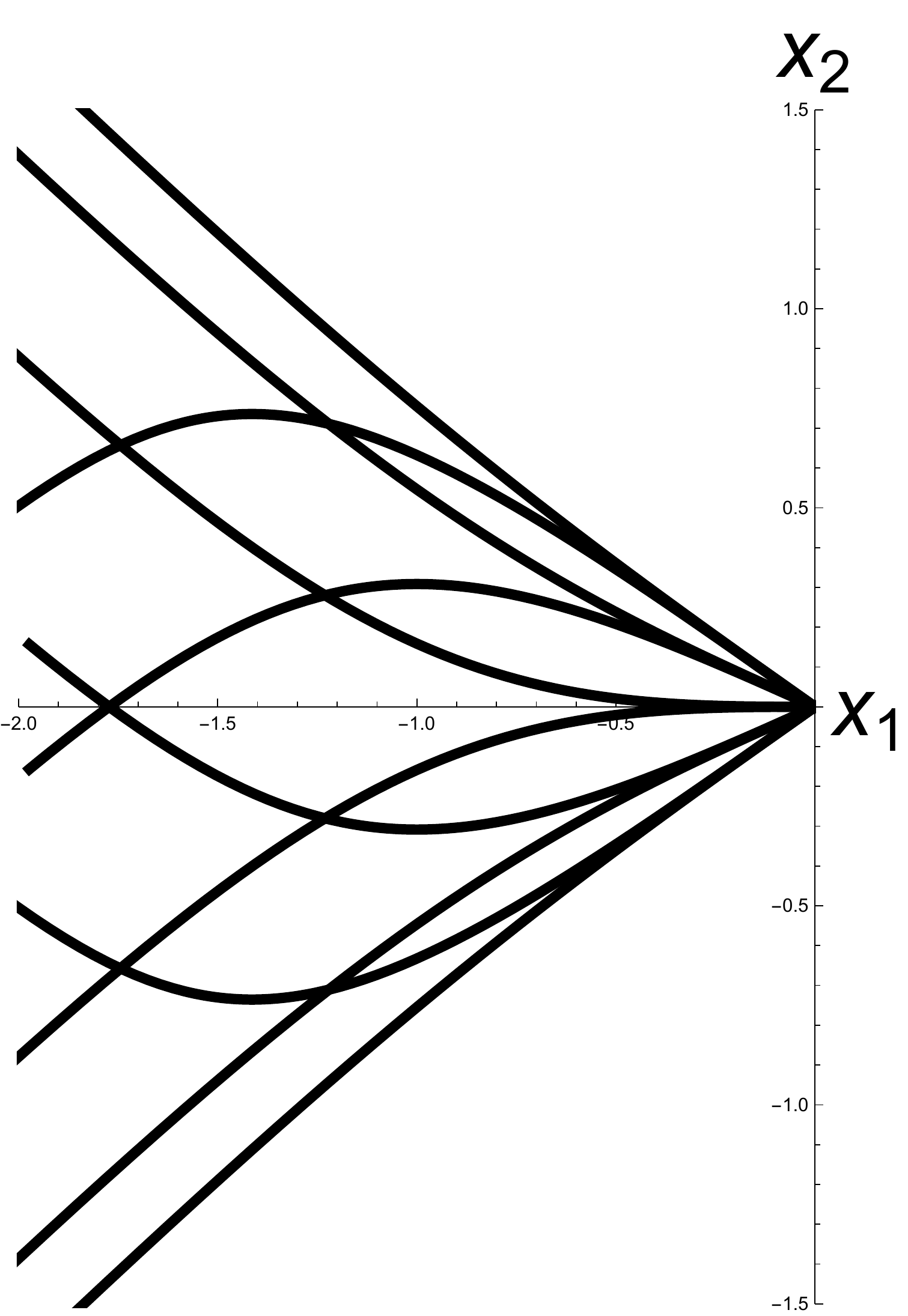} \quad \includegraphics[height=7cm]{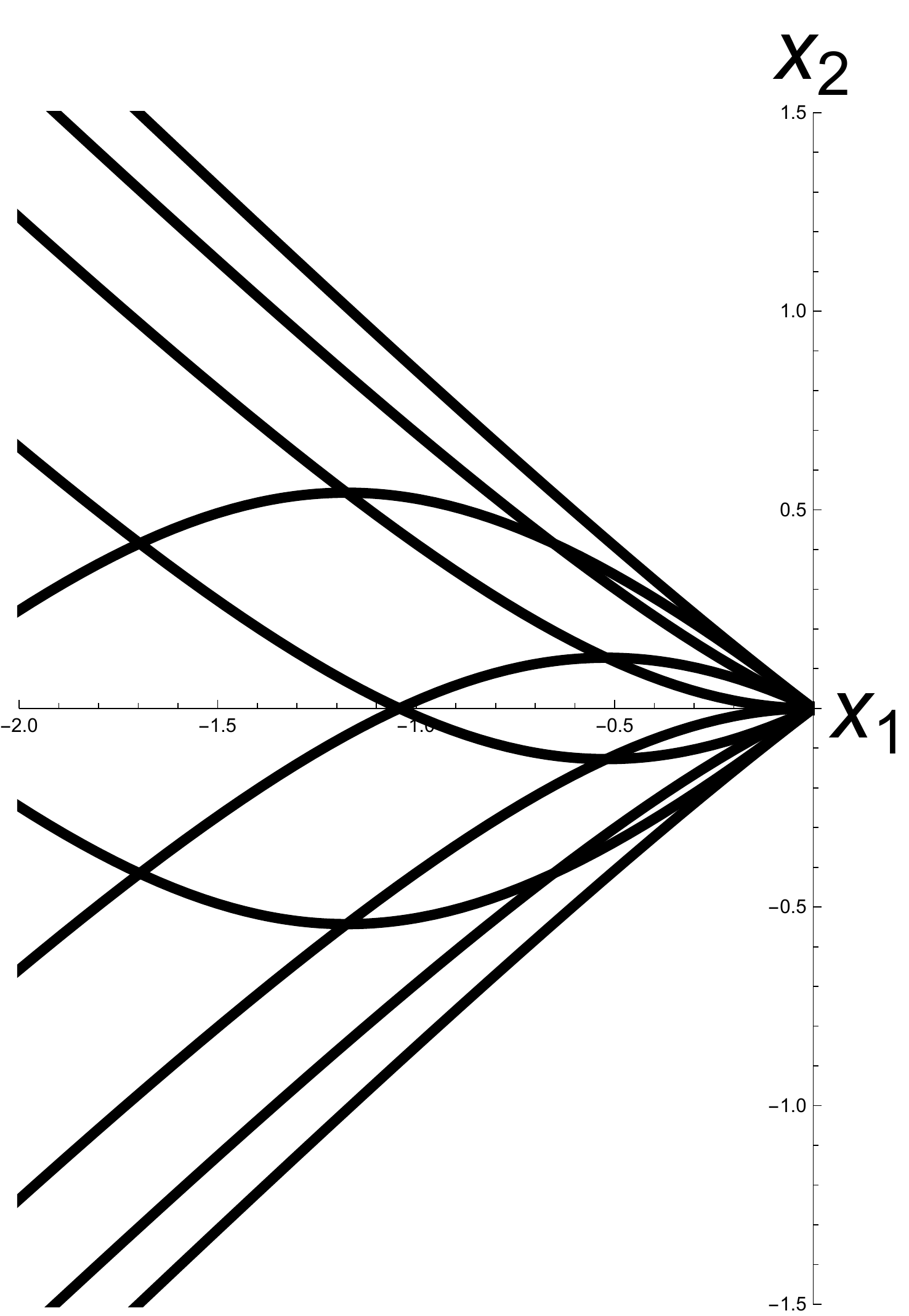} \quad \includegraphics[height=7cm]{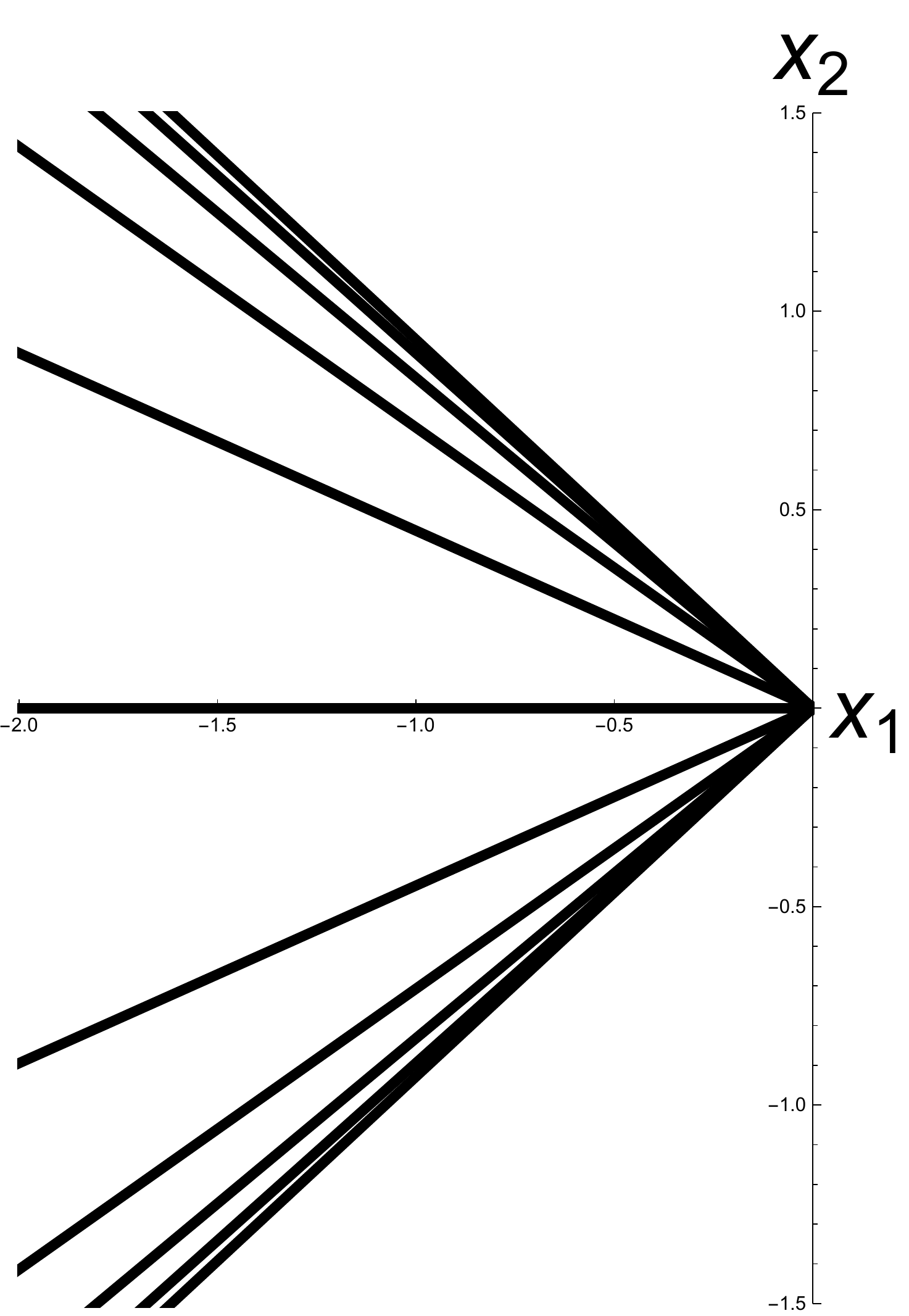}
\caption{Projections of extremals to the plane $(x_1, x_2)$ for $C^\pm$, $C^0$ and $C_0^0$ (timelike case)}\label{extr}
\end{figure}

Projections of extremals to the plane $(x_1, x_2)$ with $|\alpha|=1$ and $\psi_0 = 0$ are shown in Fig.~\ref{extr}, left. Notice that since $\psi_t \in(-\K, \K)$ we have an upper bound for the time parameter
\begin{align}
t < \tsupr (\lambda) = \frac{\K(k) - \psi_0}{\ae}. \label{supr}
\end{align}
When $t \to \tsupr (\lambda)$, solutions~(\ref{expx1})--(\ref{expz}) explode, i.e., tend to infinity.

For the subset $C^0$ we have
\begin{align*}
\alpha &= 0, \quad E = \frac{c^2}{2} ~ \Longrightarrow \quad  c \equiv \const, \ \theta = \theta_0 - c \, t, \\
x_1 (t) &= \frac{\sinh (\theta_0 - c \, t)-\sinh \theta_0}{c}, \\
x_2 (t) &= \frac{\cosh \theta_0  - \cosh (\theta_0 - c \, t)}{c}, \\
y (t) &= \frac{\sinh (c \, t) - c \ t}{2 c^2}, \\
z (t) &=  \frac{4 \sinh^3 \big(\frac{c \, t}{2}\big) \sinh \big(3 \theta_0 - \frac{3 c \, t}{2}\big)-3 \big(\sinh(c \, t) - c \, t\big) \sinh \theta_0}{6 c^3}.
\end{align*}
Projections of extremals to the plane $(x_1, x_2)$ with $|c|=1$ (hyperbolas) are shown in Fig.~\ref{extr}, center.

If $\lambda \in C_0^0$, then
\begin{align*}
\alpha &= c = E = 0, \quad \theta \equiv \const, \quad \sinh \theta = s_0,\quad \cosh\theta= c_0,\\
x_1 (t) &= - c_0 \ t, \\
x_2 (t) &= s_0 \ t, \\
y(t) &\equiv 0, \\
z (t) &= \frac{(2 s_0^2+1)s_0}{6} \ t^3.
\end{align*}
Projections of extremals to the plane $(x_1, x_2)$ are straight lines (see Fig.~\ref{extr}, right). For $\lambda \in C^0 \cup C_0^0$ we have no upper bound for the time parameter: $\tsupr(\lambda) = +\infty$.

So we obtained a parameterization of the exponential mapping
\begin{align}
&\Exp: N \to M = \R^4, \qquad \Exp (\lambda, t)=q(t),\\
&N = \{(\lambda, t) \in C \times \R_+ ~ | ~ t \in \big(0,\tsupr(\lambda)\big)\},
\end{align}
in the timelike case. It maps a pair $(\lambda, t)$ to the point of the corresponding extremal trajectory $q(t)$.

Further we investigate discrete symmetries of the exponential mapping and on this basis find estimates for the cut time on extremals.

\subsection{Discrete symmetries of exponential mapping}\label{symm}
Subsystem~(\ref{dottht})--(\ref{dotat}) for costate variables of the normal Hamiltonian system has symmetries which preserve the direction field of this system. We already described one of them~(\ref{tleps1}).

Let us define the action of symmetries $\varepsilon^i$ on the set of trajectories of the vertical subsystem with preservation of time direction. Denote a smooth curve $$\gamma = \Big\{\big(\theta (t), c (t), \alpha \big) ~ | ~ t \in [0,T]\Big\} \subset C.$$  Define the  action of symmetries on these curves (see. Fig.~\ref{reflimt}):
\begin{align*}
& \varepsilon^1 : \gamma \mapsto \gamma_1 = \Big\{\big(\theta^1(t), c^1(t), \alpha^1\big) ~ | ~ t \in [0, T]\Big\} = \Big\{\big(-\theta (t), -c(t), -\alpha\big)\Big\}, \\
& \varepsilon^2 : \gamma \mapsto \gamma_2 = \Big\{\big(\theta^2(t), c^2(t), \alpha^2\big) ~ | ~ t \in [0, T]\Big\} = \Big\{\big(\theta(T-t), -c(T-t), \alpha\big)\Big\}, \\
& \varepsilon^3 : \gamma \mapsto \gamma_3 = \Big\{\big(\theta^3(t), c^3(t), \alpha^3\big) ~ | ~ t \in [0, T]\Big\} = \Big\{\big(-\theta(T-t),c (T-t), -\alpha\big)\Big\}.
\end{align*}
It is obvious that if $\gamma$ is a solution to the vertical subsystem~(\ref{dottht})--(\ref{dotat}), then $\gamma_i, i = 1,2,3$, is a solution as well.

Now consider the group of symmetries $G = \{ \mathrm{Id}, \varepsilon^1, \varepsilon^2, \varepsilon^3 = \varepsilon^1 \circ \varepsilon^2\} \cong \Z_2 \times \Z_2$. The symmetry $\varepsilon^1$ preserves the direction of time, while $\varepsilon^2$ and $\varepsilon^3$ change it.

\begin{figure}[htbp]
\centering
\includegraphics[width=0.47\linewidth]{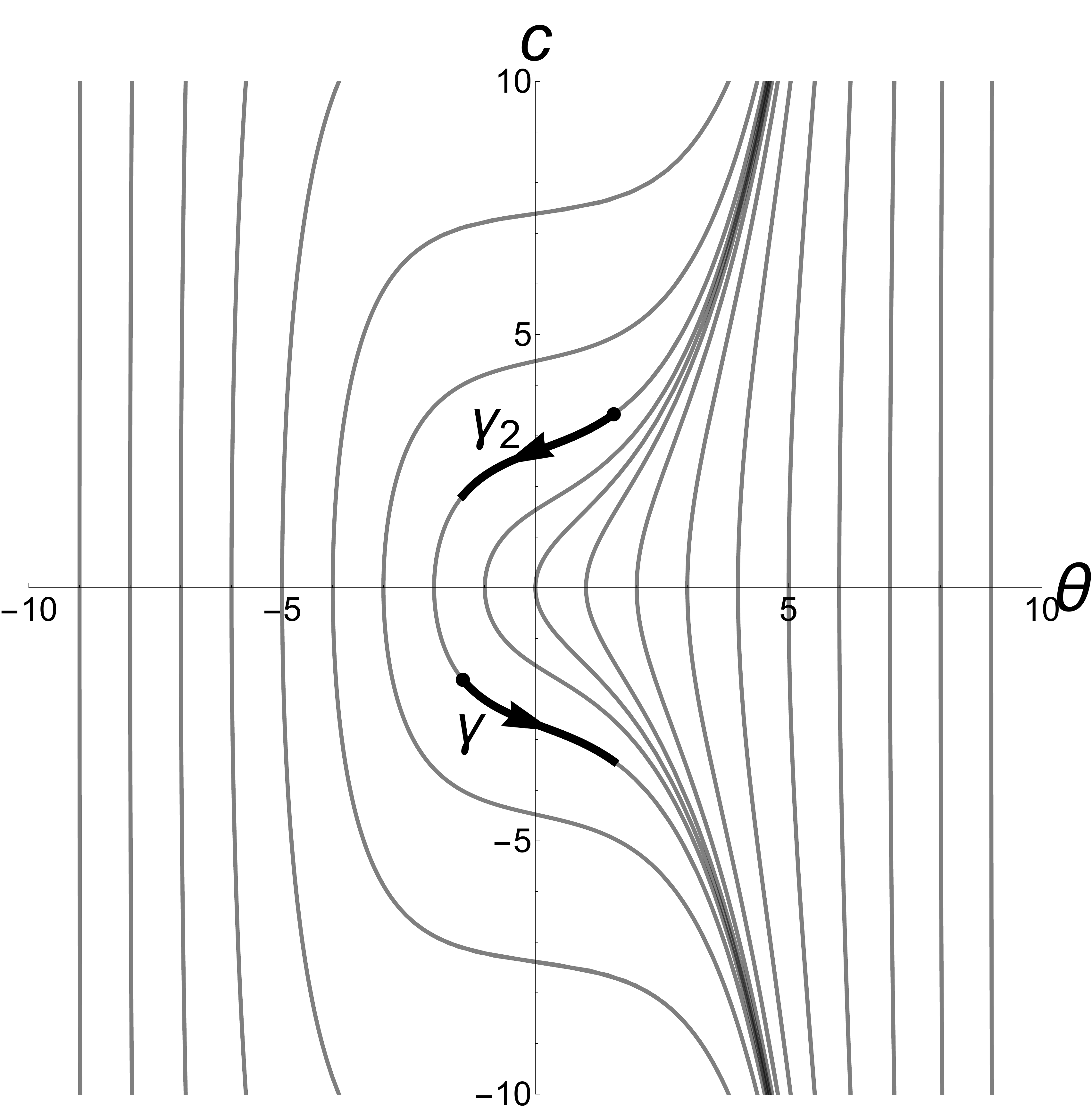}
\qquad
\includegraphics[width=0.47\linewidth]{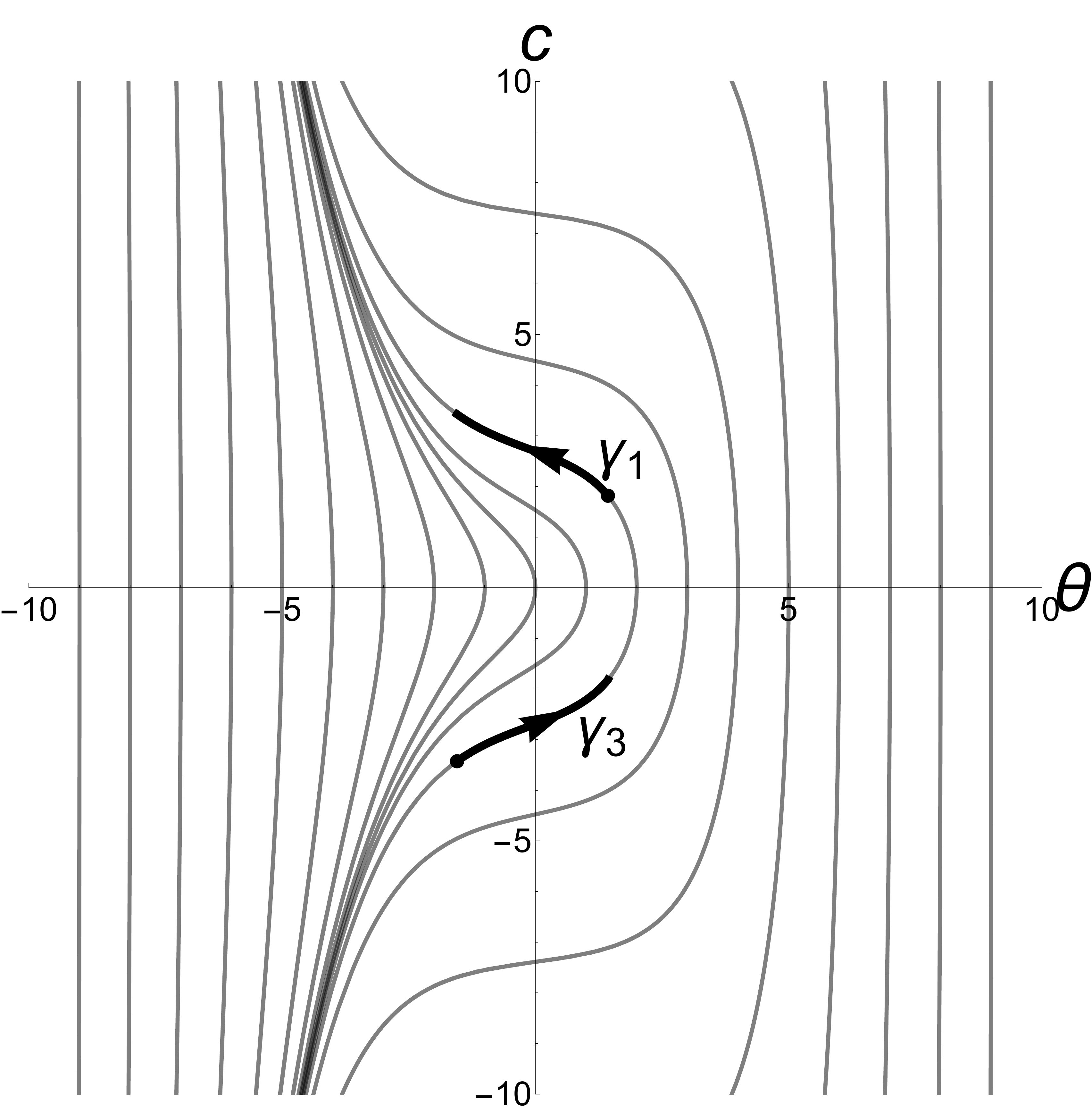}
\caption{Action of symmetries on trajectories of vertical subsystem (timelike case)}\label{reflimt}
\end{figure}

Continue the action of symmetries from the vertical subsystem to solutions of the Hamiltonian system of PMP
\begin{align}
&\dot{\theta} (t) = - c (t), \quad \dot{c}(t) = - \alpha \cosh \theta (t), \quad \dot{\alpha} = 0, \label{ham3}\\
&\dot{q} (t) = -\cosh \theta (t) X_1 \big(q(t)\big) + \sinh \theta (t) X_2 \big(q (t)\big), \label{ham4}
\end{align}
in the following way:
\begin{align}
\varepsilon^i: \Big\{\big(\theta (t), c (t), \alpha, q(t)\big) ~ | ~ t \in [0, T]\Big\} \mapsto \Big\{ \big(\theta^i(t), c^i(t), \alpha^i, q^i(t)\big) | t \in [0,T]\Big\}, \label{symham}
\end{align}
where $q(t) = \big(x_1 (t), x_2 (t), y(t), z(t)\big), \ t \in [0,T],$ is a geodesic and
\begin{align*}
q^i(t) = \big( x_1^i(t), x_2^i (t), y^i (t), z^i(t)\big), \qquad t \in [0,T], \ i = 1, 2, 3,
\end{align*}
are its images under the action of the symmetries $\varepsilon^i$.

\begin{lemma}\label{lem1t} The symmetries $\varepsilon^i$ map trajectories on the plane $(x_1, x_2)$ in the following way:
\begin{align*}
&x_1^1(t) = x_1(t), \qquad &&x_2^1(t) = - x_2(t), \\
&x_1^2(t) =  x_1(T) - x_1(T-t), \qquad &&x_2^2(t) = x_2(T)  - x_2 (T-t), \\
&x_1^3(t) = x_1(T)  - x_1(T-t), \qquad &&x_2^3(t) = x_2(T-t)- x_2(T).
\end{align*}
\end{lemma}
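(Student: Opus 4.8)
The plan is to exploit the structural fact that, by the very definition of the lifted action~(\ref{symham}), each image curve $q^i(t) = \big(x_1^i(t), x_2^i(t), y^i(t), z^i(t)\big)$ is the solution of the horizontal Hamiltonian system~(\ref{ham4}) — equivalently of the two scalar equations $\dot{x}_1 = -\cosh\theta$ and $\dot{x}_2 = \sinh\theta$ from~(\ref{dotx1t})--(\ref{dotx2t}) — but driven by the reflected angle $\theta^i(t)$, and issuing from the origin, so that $q^i(0) = q(0) = 0$ by the normalization $x_1(0)=x_2(0)=0$. Consequently the proof of the $(x_1, x_2)$-formulas reduces to inserting the substitution law for $\theta^i$ into these two first-order equations and integrating once.

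First I would treat $\varepsilon^1$, where $\theta^1(t) = -\theta(t)$. Since $\cosh$ is even and $\sinh$ is odd, one gets $\dot{x}_1^1 = -\cosh\theta^1 = -\cosh\theta = \dot{x}_1$ and $\dot{x}_2^1 = \sinh\theta^1 = -\sinh\theta = -\dot{x}_2$. Integrating from $0$ with the common initial value $0$ yields at once $x_1^1(t) = x_1(t)$ and $x_2^1(t) = -x_2(t)$.

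Next I would handle the two time-reversing symmetries $\varepsilon^2$ and $\varepsilon^3$; this is the only place where any bookkeeping is required. For $\varepsilon^2$ we have $\theta^2(t) = \theta(T-t)$, hence $\dot{x}_1^2(t) = -\cosh\theta(T-t)$ and $\dot{x}_2^2(t) = \sinh\theta(T-t)$. Integrating from $0$ and performing the change of variable $u = T-s$ turns each integral into the integral of $\dot{x}_1$ or $\dot{x}_2$ over $[T-t, T]$, so the fundamental theorem of calculus gives $x_1^2(t) = x_1(T) - x_1(T-t)$ and $x_2^2(t) = x_2(T) - x_2(T-t)$, both of which vanish at $t=0$ as needed. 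For $\varepsilon^3$, where $\theta^3(t) = -\theta(T-t)$, parity converts $\dot{x}_1^3$ into $-\cosh\theta(T-t)$, identical to the $\varepsilon^2$ case and hence giving the same $x_1^3$, while $\dot{x}_2^3 = -\sinh\theta(T-t)$ carries an extra sign, so the same substitution produces $x_2^3(t) = x_2(T-t) - x_2(T)$.

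I expect no genuine obstacle here: the whole argument is a one-line integration of each horizontal equation, the only care being the substitution $u = T-s$ in the time-reversed cases together with the correct use of the parities of $\cosh$ and $\sinh$. As a consistency check I would verify the group relation $\varepsilon^3 = \varepsilon^1 \circ \varepsilon^2$ in $G \cong \Z_2 \times \Z_2$ on the level of the $(x_1,x_2)$-formulas: applying $\varepsilon^1$, which fixes $x_1$ and negates $x_2$, to the $\varepsilon^2$-formulas reproduces exactly the $\varepsilon^3$-formulas, confirming the computation.
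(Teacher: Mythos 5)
Your proposal is correct and follows essentially the same route as the paper: the paper's proof is precisely a direct integration of $\dot{x}_1=-\cosh\theta$, $\dot{x}_2=\sinh\theta$ with the reflected angle $\theta^i$, using the substitution $r=T-s$ in the time-reversed cases (the paper writes this out only for $\varepsilon^2$ and leaves the other cases implicit). Your additional consistency check via $\varepsilon^3=\varepsilon^1\circ\varepsilon^2$ is a nice touch but not part of the paper's argument.
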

\begin{proof}
Via direct integration, for instance, for $\varepsilon^2$ we verify that
\begin{align*}
&x_1^2(t) = \int_0^t \big(-\cosh \theta(T-s)\big) d s = \int_{T-t}^{T} \big(-\cosh \theta(r)\big) d r = x_1(T) - x_1(T-t), \\
&x_2^2(t) = \int_0^t \sinh \theta (T-s) d s = \int_{T-t}^{T} \sinh \theta(r) d r = x_2(T)  - x_2 (T-t).
\end{align*}
\end{proof}

\begin{lemma}\label{lem2t} The symmetries $\varepsilon^i$ map endpoints of geodesics $q = (x_1, x_2, y, z)$ to endpoints of $q^i = (x_1^i, x_2^i, y^i , z^i\big)$ in the following way:
\begin{align*}
& x_1^1 (T)= x_1(T),  &&x_2^1 (T) = -x_2(T), &&y^1(T) = -y(T), &&z^1(T) = -z(T), \\
& x_1^2 (T)= x_1(T), &&x_2^2(T) = x_2(T), &&y^2(T) = -y(T), &&z^2(T) = z(T)-x_1(T) y(T), \\
& x_1^3 (T)= x_1(T), &&x_2^3(T) = -x_2(T), &&y^3(T) = y(T), &&z^3(T) = -z(T) + x_1(T) y(T).
\end{align*}
\end{lemma}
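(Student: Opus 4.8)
The plan is to read off each coordinate of the image endpoint $q^i(T)$ by integrating, from the common origin, the horizontal part of the Hamiltonian system written for the transformed trajectory $q^i$. The first two coordinates are immediate: substituting $t=T$ into Lemma~\ref{lem1t} and using the initial conditions $x_1(0)=x_2(0)=0$ gives $x_1^i(T)=x_1(T)$ together with $x_2^1(T)=x_2^3(T)=-x_2(T)$ and $x_2^2(T)=x_2(T)$. The substance lies in the pair $(y^i,z^i)$. For these I would integrate equations~(\ref{dotyt})--(\ref{dotzt}) written for $q^i$, that is $\dot y^i=\tfrac12\big(x_2^i\cosh\theta^i+x_1^i\sinh\theta^i\big)$ and $\dot z^i=\tfrac12\big((x_1^i)^2+(x_2^i)^2\big)\sinh\theta^i$, inserting the expressions for $x_1^i,x_2^i$ from Lemma~\ref{lem1t} and the transformed angles $\theta^1=-\theta(t)$, $\theta^2=\theta(T-t)$, $\theta^3=-\theta(T-t)$.

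For $\varepsilon^1$ the computation is trivial. Since $x_1^1=x_1$, $x_2^1=-x_2$, $\cosh\theta^1=\cosh\theta$ and $\sinh\theta^1=-\sinh\theta$, both integrands merely change sign, so $\dot y^1=-\dot y$ and $\dot z^1=-\dot z$; integrating from the origin yields $y^1(T)=-y(T)$ and $z^1(T)=-z(T)$.

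The symmetries $\varepsilon^2,\varepsilon^3$ carry the time reversal $t\mapsto T-t$, and here the substitution $r=T-t$ in the endpoint integrals is the workhorse. Three identities handle $y$: from (\ref{dotx1t})--(\ref{dotx2t}) one has $\int_0^T\cosh\theta\,dr=-x_1(T)$ and $\int_0^T\sinh\theta\,dr=x_2(T)$, while $\int_0^T\tfrac12(x_2\cosh\theta+x_1\sinh\theta)\,dr=y(T)$ by (\ref{dotyt}). With these the two ``constant-endpoint'' contributions $\tfrac12 x_2(T)\int\cosh\theta$ and $\tfrac12 x_1(T)\int\sinh\theta$ cancel, leaving $y^2(T)=-y(T)$; the identical bookkeeping gives $y^3(T)=y(T)$.

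The one genuinely delicate step is $z^2(T)$. After the substitution $r=T-t$ and expansion of $\big(x_1(T)-x_1(r)\big)^2+\big(x_2(T)-x_2(r)\big)^2$, the quadratic term reproduces $\int_0^T\tfrac12(x_1^2+x_2^2)\sinh\theta\,dr=z(T)$, while the mixed terms must be reduced by hand: $\int_0^T x_1\sinh\theta\,dr=\int_0^T x_1\dot x_2\,dr$ is integrated by parts and combined with $\int_0^T x_2\cosh\theta\,dr$ through the $y$-equation to give $\int_0^T x_1\sinh\theta\,dr=y(T)+\tfrac12 x_1(T)x_2(T)$, whereas $\int_0^T x_2\sinh\theta\,dr=\tfrac12 x_2(T)^2$ since $x_2\sinh\theta=x_2\dot x_2$. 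Assembling everything, the purely cubic combinations in $x_1(T),x_2(T)$ cancel and exactly the cross term $-x_1(T)y(T)$ survives, so $z^2(T)=z(T)-x_1(T)y(T)$. I expect this cancellation to be the main obstacle, since the cross term is easy to drop or misattribute. Finally, because $\varepsilon^3=\varepsilon^1\circ\varepsilon^2$, I would obtain $q^3(T)$ by applying the already-established endpoint rule of $\varepsilon^1$, namely $(x_1,x_2,y,z)(T)\mapsto(x_1,-x_2,-y,-z)(T)$, to the endpoint $q^2(T)$; this returns $x_2^3(T)=-x_2(T)$, $y^3(T)=y(T)$ and $z^3(T)=-z(T)+x_1(T)y(T)$, in agreement with the table.
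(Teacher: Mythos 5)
Your proposal is correct and follows essentially the same route as the paper: read off $x_1^i(T),x_2^i(T)$ from Lemma~\ref{lem1t}, then integrate the equations for $y$ and $z$ along the transformed trajectory using the substitution $r=T-t$ together with the identities $\int_0^T x_1\dot x_2\,dr=y(T)+\tfrac12x_1(T)x_2(T)$ and $\int_0^T x_2\dot x_2\,dr=\tfrac12x_2(T)^2$, exactly as in the paper's worked case $\varepsilon^2$. Your only deviation is obtaining $\varepsilon^3$ by composing the already-established endpoint rules for $\varepsilon^1$ and $\varepsilon^2$ rather than by a third direct integration, which is a legitimate minor shortcut since $q^2$ is again a geodesic from the origin.
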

\begin{proof}
Lemma~\ref{lem1t} gives us the expressions for $x_1^i (T)$ and $x_2^i (T)$. The expressions for other variables are obtained by integration. For example, for $\varepsilon^2$ we have
\begin{align*}
y^2 (T) &= \int_0^T \frac{x_2^2(t) \cosh \theta^2 (t) + x_1^2 (t) \sinh \theta^2 (t)}{2} d t= \frac{x_2(T)}{2}\int_0^T \cos \theta(T-t) d t \\
&\qquad\quad+ \frac{x_1(T)}{2}\int_0^T \sinh \theta(T-t) d t - \int_0^T \frac{x_2(s) \cosh \theta (s)  + x_1(s) \sinh \theta (s)}{2} d s  \\
	&\qquad= \frac{x_2(T) x_1(T)}{2} - \frac{x_1(T) x_2(T)}{2} - y(T)= -y(T), \\
z^2(T) &= \int_0^T \frac{\big(x_1^2(t)\big)^2 + \big(x_2^2(t)\big)^2}{2}\dot{x}_2^2(t) d t = \frac{\big(x_2^2(T)\big)^3}{6}+ \int_0^T \frac{\big(x_1^2(t)\big)^2}{2} \dot{x}_2^2 (t) d t  \\
&\qquad = \frac{\big(x_2(T)\big)^3}{2} + \int_0^T \frac{\big(x_1(T)\big)^2 - 2 x_1 (T) x_1(T-t) + \big(x_1 (T-t)\big)^2}{2} \dot{x}_2 (T-t) d t \\
&\qquad = \frac{\big(x_2 (T)\big)^3}{6} + \frac{\big(x_1 (T)\big)^2 x_2 (T)}{2} - x_1 (T) \int_0^T x_1 (s) \dot{x}_2 (s) d s + z(T) - \frac{\big(x_2 (T)\big)^3}{6} \\
&\qquad = \frac{\big(x_1 (T)\big)^2 x_2 (T)}{2} - x_1 (T) \Big(y(T) + \frac{x_1(T)x_2(T)}{2}\Big) + z(T) = z(T) -  x_1 (T) y(T).
\end{align*}
\end{proof}
We define the action of $\varepsilon^i$ in the preimage $N$ of the exponential mapping by restricting the action to initial point of trajectory of vertical subsystem:
\begin{align}
&\varepsilon^i: C \to C, \qquad \varepsilon^i (\theta, c, \alpha) = (\theta^i, c^i, \alpha^i), \label{defpre1}\\
&(\theta^1, c^1, \alpha^1) = (-\theta, -c , -\alpha), \\
&(\theta^2, c^2, \alpha^2) = (\theta, -c, \alpha), \\
&(\theta^3, c^3, \alpha^3) = (-\theta, c, -\alpha), \label{defpre2}
\end{align}
in the following way:
\begin{align*}
&\varepsilon^1(\lambda, t) = \big(\varepsilon^1 (\lambda), t\big),  \\
&\varepsilon^i(\lambda, t) = \big(\varepsilon^i \circ e^{t \vec{H}_v} (\lambda), t\big),  \qquad i = 2, 3,
\end{align*}
where $\vec{H}_v = -c \frac{\partial}{\partial \theta} - \alpha \cosh \theta \frac{\partial}{\partial c} \in \mathrm{Vec}(C)$ is the vertical part of Hamiltonian vector field.

We define the action of $\varepsilon^i$ in the image of exponential mapping $M$ by restricting the action to endpoints of extremals (see Lemma~\ref{lem2t}):
\begin{align}
&\varepsilon^i : M \to M, \qquad \varepsilon^i (q) = \varepsilon^i (x_1, x_2, y, z) = q^i = (x_1^i, x_2^i, y^i, z^i), \label{defim1}\\
&(x_1^1, x_2^1, y^1, z^1) = (x_1, \ -x_2, \ -y, \ -z), \\
&(x_1^2, x_2^2, y^2, z^2) = (x_1, \  x_2, \ -y, \ z - x_1 y), \\
&(x_1^3, x_2^3, y^3, z^3) = (x_1, \ -x_2, \ y, \ x_1 y - z).\label{defim2}
\end{align}

Since the action of $\varepsilon^i$ in the domain $N$ and the image $M$ of the exponential map are induced by the actions of symmetries~(\ref{symham}) on trajectories of the Hamiltonian system~(\ref{ham3})--(\ref{ham4}), we have the following result.

\begin{proposition}\label{prop1t}
The mappings $\varepsilon^i, i= 1, 2, 3,$ are symmetries of the exponential mapping, i.e.,
\begin{align*}
&(\varepsilon^i \circ \Exp) (\theta,c,\alpha,t) = (\Exp \circ \, \varepsilon^i) (\theta, c, \alpha, t), \qquad (\theta, c, \alpha, t) \in N.
\end{align*}
\end{proposition}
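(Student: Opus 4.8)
The plan is to read Proposition~\ref{prop1t} as a bookkeeping identity that packages Lemma~\ref{lem1t} and Lemma~\ref{lem2t} together with the definitions of the action of $\varepsilon^i$ on the preimage $N$ (formulas~(\ref{defpre1})--(\ref{defpre2})) and on the image $M$ (formulas~(\ref{defim1})--(\ref{defim2})). Fix $i \in \{1,2,3\}$ and a point $(\lambda, t) = (\theta, c, \alpha, t) \in N$, and run the whole symmetry construction with the final time set equal to $T = t$. Write $q(\cdot) = \Exp(\lambda, \cdot)$ for the extremal with vertical initial covector $\lambda$, let $\gamma_i = (\theta^i, c^i, \alpha^i)$ be the reflected vertical trajectory, and let $q^i$ be the horizontal curve obtained by integrating~(\ref{ham4}) with the data $\gamma_i$ from the origin; then $(\theta^i, c^i, \alpha^i, q^i)$ is again a solution of~(\ref{ham3})--(\ref{ham4}) issuing from the identity.

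First I would show that $\Exp(\varepsilon^i(\lambda, t)) = q^i(t)$. Since $q^i$ is by construction the geodesic driven by $\gamma_i$ and issuing from the identity, we have $q^i = \Exp(\mu^i, \cdot)$, where $\mu^i = (\theta^i(0), c^i(0), \alpha^i)$ is the initial covector of $\gamma_i$; so it is enough to check that $\mu^i$ is exactly the vertical part of $\varepsilon^i(\lambda, t)$. For $i = 1$ the symmetry preserves the time direction, $\gamma_1(s) = (-\theta(s), -c(s), -\alpha)$, so $\mu^1 = \varepsilon^1(\lambda)$, matching $\varepsilon^1(\lambda, t) = (\varepsilon^1(\lambda), t)$. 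For $i = 2, 3$ the symmetry reverses time, so the initial covector of $\gamma_i$ is read off at the \emph{opposite} end $e^{t \vec{H}_v}(\lambda) = (\theta(t), c(t), \alpha)$; applying the reflection~(\ref{defpre1})--(\ref{defpre2}) there gives $\mu^i = \varepsilon^i\big(e^{t \vec{H}_v}(\lambda)\big)$, which is precisely the vertical part of the prescribed preimage action $\varepsilon^i(\lambda, t) = \big(\varepsilon^i \circ e^{t \vec{H}_v}(\lambda), t\big)$.

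Second I would invoke Lemma~\ref{lem2t} with $T = t$ to compute the endpoint: it gives $q^i(t) = \varepsilon^i(q(t))$, where on the right $\varepsilon^i$ is the point transformation of $M$ from~(\ref{defim1})--(\ref{defim2}) (indeed, the formulas~(\ref{defim1})--(\ref{defim2}) are defined so as to match the endpoint expressions of Lemma~\ref{lem2t}). Hence $q^i(t) = \varepsilon^i(\Exp(\lambda, t))$, and combining this with the first step yields $\Exp(\varepsilon^i(\lambda, t)) = q^i(t) = \varepsilon^i(\Exp(\lambda, t))$, which is the asserted commutativity. A final routine verification is that $\varepsilon^i(\lambda, t)$ really lies in $N$, i.e.\ that $t < \tsupr(\mu^i)$; this holds because $q^i$ is built from the non-exploding solution $q$ on $[0,t]$ and the symmetries merely permute the strata $C^+, C^-, C^0, C_0^0$ without altering the blow-up behaviour.

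The substantive analysis — the integrations producing the endpoint formulas — is already contained in Lemmas~\ref{lem1t} and~\ref{lem2t}, so what remains in the Proposition is essentially the indexing. The one step that genuinely needs care, and which I expect to be the main obstacle, is the time-reversal bookkeeping for $i = 2, 3$: because $\varepsilon^2$ and $\varepsilon^3$ flip $s \mapsto t - s$, the naive image of a forward geodesic would run backwards, and it is exactly the composition with the flow $e^{t \vec{H}_v}$ in the preimage action that repairs this and turns a time-reversing symmetry of the vertical subsystem into a genuine symmetry of the forward exponential map. Checking that the initial covector $\mu^i$ of $\gamma_i$ coincides with $\varepsilon^i \circ e^{t \vec{H}_v}(\lambda)$, rather than with $\varepsilon^i(\lambda)$, is the crux of the argument.
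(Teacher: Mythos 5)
Your proposal is correct and follows essentially the same route as the paper: the paper states the proposition as an immediate consequence of the fact that the actions of $\varepsilon^i$ on $N$ and on $M$ are, by construction, the restrictions of the trajectory-level symmetries~(\ref{symham}) to initial covectors and endpoints respectively, which is exactly the bookkeeping you carry out (including the key point that for the time-reversing symmetries $i=2,3$ the initial covector of the reflected vertical trajectory is $\varepsilon^i\circ e^{t\vec{H}_v}(\lambda)$, matching the prescribed preimage action). Your additional check that $\varepsilon^i(\lambda,t)\in N$ is a detail the paper omits, but it is handled correctly.
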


\subsection{Maxwell points of timelike normal extremals}
A point $q(t)$ on a geodesic is called a Maxwell point if there exists another geodesic $\tilde{q}(s)\not \equiv q(s)$ for which $\tilde{q}(t) = q(t), t>0$. It is known that a geodesic cannot be optimal after a Maxwell point. In this section we compute the Maxwell points corresponding to some symmetries $\varepsilon^i$. On this basis we derive estimates for the cut time along geodesics
$$\tcut (\lambda) = \sup \{ t>0 ~ | ~ \Exp(\lambda, s) \text{ is optimal for } s \in[0,t]\}.$$

We define Maxwell sets in the preimage of $\Exp$ corresponding to the symmetries $\varepsilon^i$:
\begin{align}
\MAX^i &= \big\{(\lambda,t) \in N ~ | ~ \lambda^i \neq \lambda, \ \Exp(\lambda^i, t) = \Exp(\lambda, t) \big\}, \label{defmax}\\
\lambda &= (\theta, c, \alpha), \qquad \lambda^i = \varepsilon^i(\lambda).\nonumber
\end{align}

It follows from Proposition~\ref{prop1t} that the equality $\Exp (\lambda^i, t) = \Exp (\lambda, t)$ is equivalent to $\varepsilon^i\big(q(t)\big) = q(t)$. Therefore we get  a description of fixed points of the symmetries $\varepsilon^i$ in the image of the exponential mapping.

\begin{lemma}\label{lemim}~

\begin{enumerate}
\item $\varepsilon^1(q) = q \quad \Longleftrightarrow \quad  x_2 =0,\ y =0,\ z= 0$.
\item $\varepsilon^2(q) = q \quad \Longleftrightarrow \quad  y = 0$.
\item $\ds \varepsilon^3(q) = q \quad \Longleftrightarrow \quad  x_2=0,\ z  = \frac{x_1 y}{2}$.
\end{enumerate}
\end{lemma}
\begin{proof}
Follows from definition~(\ref{defim1})--(\ref{defim2}) of action of the symmetries in $E$.
\end{proof}

Now we compute fixed points of symmetries in the preimage of exponential mapping, which is necessary for description of Maxwell sets. Introduce the following coordinates in the sets $N^\pm = \{(\lambda, t) \in C^\pm \times \R_+ \mid t \in \big(0, \tsupr (\lambda)\big)\}$:
\begin{align}
&\tau = \frac{\psi_t + \psi_0}{2}, \qquad p = \frac{\psi_t-\psi_0}{2} = \frac{\ae t}{2}. \label{ptaut}
\end{align}
The parameter $\tau$ corresponds to the middle point of a trajectory.

\begin{lemma}\label{lempre}~

\begin{enumerate}
\item\label{item1} $\lambda^1 = \lambda \quad \Longleftrightarrow \quad \lambda \in C_0^0,\ \theta = 0.$
\item $\lambda^2 = \lambda \quad \Longleftrightarrow \quad \lambda \in C^\pm, \tau = 0$ \quad or \quad $\lambda \in C_0^0$.
\item $\ds \lambda^3 = \lambda \quad \Longleftrightarrow \quad \lambda \in C^0 \cup C_0^0,\ \theta + \frac{c t}{2}  = 0$
\end{enumerate}
\end{lemma}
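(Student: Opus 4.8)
The plan is to compute, for each $i=1,2,3$, the transformed initial covector $\lambda^i$ explicitly from the definitions~(\ref{defpre1})--(\ref{defpre2}) and then solve the equation $\lambda^i=\lambda$ stratum by stratum over the decomposition $C=C_0^0\cup C^0\cup C^+\cup C^-$. The symmetry $\varepsilon^1$ preserves time, so $\lambda^1=\varepsilon^1(\lambda)=(-\theta,-c,-\alpha)$, and the equation $\lambda^1=\lambda$ reads $-\theta=\theta$, $-c=c$, $-\alpha=\alpha$, forcing $\theta=c=\alpha=0$; this is exactly $\lambda\in C_0^0$ with $\theta=0$, giving item~\ref{item1} at once. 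For $i=2,3$ the transformations involve the time-$t$ flow of the vertical subsystem, $\lambda^i=\varepsilon^i\circ e^{t\vec{H}_v}(\lambda)$, so first I would record the endpoint $(\theta_t,c_t)=e^{t\vec{H}_v}(\theta_0,c_0)$ of each vertical trajectory before equating coordinates.

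On the degenerate strata this endpoint is immediate: on $C_0^0$ the flow is constant, and on $C^0$ one uses the explicit solution $\theta(t)=\theta_0-ct$ with $c\equiv\const$. For $\varepsilon^2$ I substitute into $\lambda^2=(\theta_t,-c_t,\alpha)$: on $C_0^0$ the equation $\lambda^2=\lambda$ holds identically, while on $C^0$ it would force $ct=0$ and $c=0$, impossible for $c\neq0$, $t>0$, so $C^0$ contributes nothing. For $\varepsilon^3$ I substitute into $\lambda^3=(-\theta_t,c_t,-\alpha)$, whose third coordinate already forces $\alpha=0$ and hence restricts attention to $C^0\cup C_0^0$; there $c_t=c$ is automatic, and the remaining equation $-\theta_t=\theta_0$, i.e.\ $\theta_0-ct=-\theta_0$, is the midpoint condition $\theta(t/2)=0$, which is the linear relation of item~3 (and collapses to $\theta=0$ on $C_0^0$). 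Since $\alpha\neq0$ on $C^\pm$, the symmetry $\varepsilon^3$ has no fixed points there.

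The substantive part is $\varepsilon^2$ on $C^\pm$, which I handle through the Jacobi parametrization and the coordinates $\tau=\frac{\psi_t+\psi_0}{2}$, $p=\frac{\psi_t-\psi_0}{2}=\frac{\ae t}{2}$. Here $\lambda^2=\lambda$ amounts to $\theta_t=\theta_0$ together with $c_t=-c_0$. Setting $u=\cn^2\psi$, the formula for $\sinh\theta$ rewrites as $\frac{2\ae^2}{\alpha}\bigl(\frac{1-k^2}{u}-k^2 u\bigr)$, a strictly monotone function of $u$ on $(0,1]$, so $\theta_t=\theta_0$ is equivalent to $\cn^2\psi_t=\cn^2\psi_0$; since $\cn$ is even, positive and strictly decreasing on $[0,\K]$ while $\psi_t,\psi_0\in(-\K,\K)$, this forces $\psi_t=\pm\psi_0$. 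The root $\psi_t=\psi_0$ gives $p=0$, hence $t=0$, which is excluded, whereas $\psi_t=-\psi_0$ is precisely $\tau=0$. Finally, because $\ssc$ is odd and $\dn$ is even, $\tau=0$ automatically yields $c_t=-2\ae\,\ssc\psi_t\dn\psi_t=-c_0$, so the two conditions collapse to the single relation $\tau=0$; for $C^-$ I would transfer this conclusion through the symmetry $\varepsilon^1$, which interchanges $C^+$ and $C^-$.

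The hard part will be exactly this last step: one must track the parity of the several Jacobi functions within the fundamental domain $(-\K,\K)$ and, crucially, use the strict monotonicity of $\sinh\theta$ in $\cn^2\psi$ to turn the transcendental equation $\theta_t=\theta_0$ into the clean alternative $\psi_t=\pm\psi_0$, then discard the spurious root $t=0$. Once the reflection structure $\psi_t=-\psi_0$ is isolated, compatibility of the $c$-coordinate is free, so the only genuine care lies in the monotonicity and parity bookkeeping; the remaining strata are routine integrations.
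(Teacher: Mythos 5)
Your argument is correct and follows the same route as the paper, which disposes of the lemma in one line by citing the definitions~(\ref{defpre1})--(\ref{defpre2}); your write-up simply supplies the computation that line leaves implicit, and the one genuinely non-routine step --- reducing $\theta_t=\theta_0$ on $C^\pm$ to $\psi_t=\pm\psi_0$ via the strict monotonicity of $u\mapsto\frac{1-k^2}{u}-k^2u$ on $(0,1]$ and then discarding the root $t=0$ --- is handled correctly, as is the parity argument showing that $\tau=0$ already forces $c_t=-c_0$. One point you should not have passed over silently: for item~3 your computation yields $-\theta_t=\theta_0$ with $\theta_t=\theta_0-ct$, hence $\theta-\frac{ct}{2}=0$ (equivalently $\theta(t/2)=0$) for the \emph{initial} value $\theta=\theta_0$, whereas the printed statement reads $\theta+\frac{ct}{2}=0$; these agree only if the lemma's $\theta$ is read as the endpoint value $\theta(t)$. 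A cross-check with the image side supports your sign: for $\lambda\in C^0$ the fixed-point condition $x_2(T)=0$ forces $\cosh\theta_0=\cosh(\theta_0-cT)$, i.e.\ $\theta_0=\frac{cT}{2}$, so the statement as printed carries a sign slip (or a notational ambiguity) that you reproduce by asserting your relation ``is the linear relation of item~3'' without comment. Apart from flagging that discrepancy explicitly, nothing is missing.
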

\begin{proof}
Follows from the definition of action of reflections in the domain of exponential mapping~(\ref{defpre1})--(\ref{defpre2}).
\end{proof}

\begin{theorem}\label{thrm}
Let $(\lambda,T) \in N$ and $q(T) = \Exp (\lambda, T)$. Then
\begin{enumerate}
\item $(\lambda, T) \in \MAX^1 \quad \Longleftrightarrow \quad x_2(T) = y(T) = z(T) = 0$ \ and \ if $\lambda \in C_0^0$, then $\theta \neq 0$.
\item $(\lambda, T) \in \MAX^2 \quad \Longleftrightarrow \quad \lambda \in C^\pm, y(T) = 0, \tau \neq 0$ \ or \ $\lambda \in C^0, y(T) = 0$.
\item $(\lambda, T) \in \MAX^3 \quad \Longleftrightarrow \quad x_2 = 0, \ds z = \frac{x_1 y}{2}$ and if $\lambda \in C^0\cup C_0^0$, then $\ds \theta \neq - \frac{c t}{2}$.
\end{enumerate}
\end{theorem}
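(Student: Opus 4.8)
The plan is to decompose the condition $(\lambda,T)\in\MAX^i$ into its two defining requirements, translate each one separately using the facts already established, and then conjoin them. By definition~(\ref{defmax}) a pair $(\lambda,T)\in N$ lies in $\MAX^i$ exactly when \emph{both} $\lambda^i\neq\lambda$ and $\Exp(\lambda^i,T)=\Exp(\lambda,T)$ hold, with $\lambda^i$ as in~(\ref{defmax}). The whole argument reduces to rewriting these two requirements in coordinates and intersecting them.

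First I would dispose of the endpoint-coincidence requirement. By Proposition~\ref{prop1t} the maps $\varepsilon^i$ intertwine with $\Exp$, so $\Exp(\lambda^i,T)=\varepsilon^i\big(\Exp(\lambda,T)\big)=\varepsilon^i\big(q(T)\big)$. Hence $\Exp(\lambda^i,T)=\Exp(\lambda,T)$ is equivalent to $\varepsilon^i\big(q(T)\big)=q(T)$, i.e., to $q(T)$ being a fixed point of $\varepsilon^i$ acting on the image $M$. Lemma~\ref{lemim} then converts this into explicit coordinate equations on $q(T)$: for $i=1$ it gives $x_2(T)=y(T)=z(T)=0$; for $i=2$ it gives $y(T)=0$; and for $i=3$ it gives $x_2(T)=0$ together with $z(T)=\tfrac{x_1(T)y(T)}{2}$. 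These are exactly the equalities appearing in the three items of the statement.

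Next I would treat the non-fixed requirement $\lambda^i\neq\lambda$, which is simply the negation of the preimage fixed-point condition supplied by Lemma~\ref{lempre}. For $i=1$, that lemma gives $\lambda^1=\lambda\Leftrightarrow(\lambda\in C_0^0 \text{ and } \theta=0)$, so $\lambda^1\neq\lambda$ reads ``$\lambda\notin C_0^0$ or $\theta\neq0$,'' i.e., the clause ``if $\lambda\in C_0^0$ then $\theta\neq0$'' of item~1. The identical manipulation applied to $\lambda^3=\lambda\Leftrightarrow(\lambda\in C^0\cup C_0^0 \text{ and } \theta+\tfrac{ct}{2}=0)$ yields the clause ``if $\lambda\in C^0\cup C_0^0$ then $\theta\neq-\tfrac{ct}{2}$'' of item~3. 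Conjoining these negations with the endpoint equalities of the previous paragraph produces items~1 and~3 verbatim.

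The only step requiring genuine care is the case $i=2$, where I would use the partition $C=C_0^0\cup C^0\cup C^+\cup C^-$. Here Lemma~\ref{lempre} describes $\lambda^2=\lambda$ as ``$\lambda\in C^\pm$ with $\tau=0$, or $\lambda\in C_0^0$.'' Negating this excludes $C_0^0$ altogether and, on the remaining strata, demands $\tau\neq0$ on $C^\pm$ while imposing no extra constraint on $C^0$ (there the equality $\tau=0$ cannot occur, so its negation is vacuous). Intersecting with the endpoint condition $y(T)=0$ already obtained then splits into the two alternatives $\lambda\in C^\pm,\,y(T)=0,\,\tau\neq0$ and $\lambda\in C^0,\,y(T)=0$, which is precisely item~2. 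This finishes all three equivalences; the proof is entirely an assembly of Proposition~\ref{prop1t}, Lemma~\ref{lemim} and Lemma~\ref{lempre}, the delicate point being only the stratum-by-stratum bookkeeping in item~2.
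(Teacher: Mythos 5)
Your proposal is correct and follows exactly the route the paper takes: the paper's proof consists precisely of citing definition~(\ref{defmax}), the intertwining property of Proposition~\ref{prop1t} (stated just before Lemma~\ref{lemim}), and Lemmas~\ref{lemim} and~\ref{lempre}, then conjoining the resulting conditions stratum by stratum. Your extra care with the negation in item~2 is the only nontrivial bookkeeping, and you carry it out correctly.
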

\begin{proof}
Follows from definition~(\ref{defmax}) of Maxwell sets, Lemmas~\ref{lemim} and~\ref{lempre}.
\end{proof}

The equations $\varepsilon^i (q) = q$ \ define sub-manifolds in $\R^4$ with dimension from 3 to 1, exponential mapping translates the corresponding Maxwell sets into these sub-manifolds:
$$\Exp (\MAX^i) \subset \{ q \in M ~ | ~ \varepsilon^i (q) = q\}, \qquad i = 1, 2 ,3.$$

Further we describe in details the set $\MAX^2$ since it defines a sub-manifold in $M$ with the maximum dimension 3, while $\MAX^1$ and $\MAX^3$ define sub-manifolds with dimension 1 and 2. Using new coordinates (\ref{ptaut}) in the case $\alpha \neq 0$ we get
\begin{align*}
y &= -\frac{\sn \tau \cn \tau \dn \tau f_y(p)}{\alpha |\alpha| \ae^2 k^2 (\cn^2 \tau - \dn^2 \tau \sn^2 p) (1 - k^2 \sn^2 p \sn^2 \tau)},\\
f_y (p) &= -16 \ae^4 k^4 \cn^3 p \dn p \sn p + (16 \ae^4  k^2 \E (p) - \alpha^2 p) (\dn^2 p - k^2 \cn^2 p \sn^2 p).
\end{align*}

This gives us a description of the Maxwell set:
\begin{align}
\MAX^2 \cap N^\pm &= \{\nu \in N^\pm ~ | ~ y(\nu) = 0, \tau \neq 0, \tau \in (-\K, \K), p \in (0, \K) \}  \nonumber\\
 &=\{\nu \in N^\pm ~ | ~ f_y (p) = 0,  p \in (0, \K)\}, \label{max2t}
\end{align}
where $\nu = (\tau, p, E, \alpha)$.

The following obvious lemma will be useful for localization of roots of functions.

\begin{lemma} \label{lem4}
Let smooth functions $f(u), g(u)$ satisfy on $(0, u_0) \subset \R$ the conditions
\begin{align}
&f(u) \not \equiv 0, \quad g(u) > 0, \quad \bigg(\frac{f(u)}{g(u)}\bigg)' \geq 0, \label{cond1}\\
&\lim_{u\to 0} \frac{f(u)}{g(u)} = 0. \label{cond2}
\end{align}
Then $f(u) > 0$ for $u \in (0, u_0)$.
\end{lemma}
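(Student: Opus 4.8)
The plan is to transfer the whole statement to the single auxiliary function $\phi(u) = f(u)/g(u)$, which is well defined and smooth on all of $(0,u_0)$ precisely because $g>0$ there. In this language the hypotheses become transparent: condition~(\ref{cond1}) says that $\phi$ is nondecreasing (it records $\phi'\ge 0$) and that $f=\phi\,g$, while condition~(\ref{cond2}) says $\lim_{u\to 0^+}\phi(u)=0$. So the entire content reduces to monotonicity of $\phi$ together with its vanishing left limit.

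First I would deduce that $\phi\ge 0$ on $(0,u_0)$. Indeed, if $\phi(u_1)<0$ for some $u_1\in(0,u_0)$, then monotonicity forces $\phi(u)\le\phi(u_1)<0$ for every $u\in(0,u_1]$, which contradicts $\lim_{u\to 0^+}\phi(u)=0$. Hence $\phi\ge 0$, and since $g>0$ this already gives $f=\phi\,g\ge 0$ on the whole interval. Second, to upgrade to the strict inequality $\phi>0$ (equivalently $f>0$), I would show that $\phi$ cannot vanish at any interior point $u_1$: were $\phi(u_1)=0$, then $0\le\phi(u)\le\phi(u_1)=0$ on $(0,u_1]$ would force $\phi\equiv 0$, hence $f\equiv 0$, on that subinterval, which is excluded once the ratio is known to increase strictly (the role of $f\not\equiv 0$). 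Granting this, for any $0<v<u<u_0$ one has $\phi(u)>\phi(v)\ge 0$, where $\phi(v)\ge 0$ follows by letting the smaller argument tend to $0^+$; thus $\phi(u)>0$ and $f(u)>0$ throughout $(0,u_0)$.

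I expect the only delicate point to be this last passage from $f\ge 0$ to $f>0$. The weak form $\phi'\ge 0$ by itself yields only $f\ge 0$, since a ratio that stays flat near $0$ and rises only later satisfies every stated hypothesis; the strict conclusion therefore genuinely rests on the nondegeneracy encoded in $f\not\equiv 0$, invoked through the fact that the concrete functions $f=f_y$ and $g$ appearing in~(\ref{max2t}) make $\phi$ strictly monotone wherever it is not identically constant, ruling out an initial flat stretch. Once strict monotonicity of $\phi$ is secured, positivity is immediate from the vanishing left limit, and in the intended application this is exactly what is needed to localize the zeros of $f_y$.
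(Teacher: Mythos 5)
The paper offers no proof of this lemma at all --- it is introduced as ``the following obvious lemma'' and left unproved --- so your argument has to stand on its own. The positive part of your proposal is correct and is surely the intended reasoning: with $\phi=f/g$ smooth on $(0,u_0)$ (because $g>0$), the monotonicity $\phi'\ge 0$ together with $\lim_{u\to 0^+}\phi(u)=0$ gives $\phi\ge 0$, hence $f=\phi\,g\ge 0$.

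Where you hesitate --- the upgrade from $f\ge 0$ to $f>0$ --- you have in fact identified a genuine defect of the \emph{statement}, not merely left a gap in your own argument. From the hypotheses as written the strict conclusion does not follow: take $g\equiv 1$ and $f$ a $C^\infty$ function vanishing identically on $(0,u_0/2]$ and strictly increasing thereafter; then $f\not\equiv 0$, $g>0$, $(f/g)'\ge 0$ and $f/g\to 0$, yet $f$ is not positive on $(0,u_0)$. So if ``smooth'' means $C^\infty$, the lemma is literally false, and your observation that the strict inequality ``genuinely rests on'' extra information about the concrete functions is the correct diagnosis. There are two clean ways to close it. First, if $f$ and $g$ are real-analytic (as all the Jacobi-function combinations in the applications are), then $\phi(u_1)=0$ at some $u_1$ forces $\phi\equiv 0$, hence $f\equiv 0$, on $(0,u_1]$, and the identity theorem then gives $f\equiv 0$ on all of $(0,u_0)$, contradicting $f\not\equiv 0$. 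Second, one can strengthen~(\ref{cond2}) to what is actually verified in every application, namely an expansion $f/g=c\,u^{m}+o(u^{m})$ with $c>0$, which yields $\phi>0$ on some interval $(0,\delta)$ and then $\phi>0$ everywhere by monotonicity. Your proposed patch via strict monotonicity of $\phi$ also works, but it is a property of the particular comparison pairs checked in Lemma~\ref{lem5t} and its analogues (Lemmas~\ref{lemc2},~\ref{lemc3}), not a consequence of the hypotheses of Lemma~\ref{lem4}, so as written your argument does not prove the lemma in the generality in which it is stated --- though, to be fair, nothing can.
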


If functions $f$ and $g$ satisfy conditions (\ref{cond1}), (\ref{cond2}), then we say that $g$ is a comparison function for $f$ on the interval $(0,u_0)$.

\begin{lemma} \label{lem5t}
The function $f_y(p)>0$ for $p \in \big(0,\K(k)\big), \ k \in (0, 1)$.
\end{lemma}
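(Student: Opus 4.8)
The plan is first to eliminate $\alpha$ in favour of $\ae$ and $k$. From the definitions $\ae = \sqrt{\sqrt{E^2+\alpha^2}/2}$ and $k^2 = \tfrac12 + E/(4\ae^2)$ one gets $4\ae^4 = E^2+\alpha^2$ and $E = 2\ae^2(2k^2-1)$, whence $\alpha^2 = 4\ae^4\big(1-(2k^2-1)^2\big) = 16\ae^4 k^2(1-k^2)$. Substituting this into the definition of $f_y$ and factoring out the positive constant $16\ae^4 k^2$ reduces the claim to showing $\Phi(p) > 0$ on $\big(0,\K(k)\big)$, where $\Phi = -k^2 a + b\,w$ with $a = \cn^3 p\,\dn p\,\sn p$, $\ b = \E(p)-(1-k^2)p$, and $w = \dn^2 p - k^2\cn^2 p\,\sn^2 p$.

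Since $\Phi(0)=0$ (because $a(0)=b(0)=0$), it suffices to prove $\Phi' > 0$ on $(0,\K)$, and the computation is arranged so as to collapse. Using the Jacobi derivative rules I check that $b' = k^2\cn^2 p$ and, crucially, $w' = -4k^2 a$, so that $\Phi' = k^2\big(-a' + \cn^2 p\,w - 4ab\big)$. The identity $-a' + \cn^2 p\,w = 4\cn^2 p\,\dn^2 p\,\sn^2 p$ (which follows after reducing $3\sn^2 p - \cn^2 p + 1 = 4\sn^2 p$ via $\cn^2 = 1-\sn^2$) then lets me factor $\Phi' = 4k^2\cn^2 p\,\dn p\,\sn p\,\big(\dn p\,\sn p - \cn p\,b\big)$. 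On $(0,\K)$ the Jacobi factors $\cn p,\dn p,\sn p$ are all positive, so everything reduces to the sign of the single scalar $G(p) := \dn p\,\sn p - \cn p\,b(p)$.

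To prove $G>0$ I apply Lemma~\ref{lem4} with comparison function $g = \cn p>0$. One has $G(0)=0$ and $G/\cn p = \dn p\,\ssc p - b \to 0$ as $p\to 0$, giving condition~(\ref{cond2}); and a direct differentiation, again telescoping, yields $(G/\cn p)' = \dn^2 p/\cn^2 p - k^2 = (\dn^2 p - k^2\cn^2 p)/\cn^2 p = (1-k^2)/\cn^2 p > 0$ (using $\dn^2 - k^2\cn^2 = 1-k^2$), giving condition~(\ref{cond1}). Hence $G>0$ on $(0,\K)$, so $\Phi'>0$, so $\Phi$ is strictly increasing from $\Phi(0)=0$; therefore $\Phi>0$ and $f_y = 16\ae^4 k^2\,\Phi > 0$.

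The main obstacle is structural rather than conceptual: recognising the two simplifications $w' = -4k^2 a$ and $(G/\cn p)' = (1-k^2)/\cn^2 p$, together with the polynomial identity $3\sn^2 p - \cn^2 p + 1 = 4\sn^2 p$, which are exactly what make $\Phi'$ factor through a single scalar $G$ and make $G$ amenable to the comparison lemma. Choosing the right comparison function ($g=\cn p$, rather than a power of $p$, even though $\Phi\sim \tfrac{4}{3}k^2(1-k^2)p^3$ at the origin) is the only genuinely nonroutine decision; once it is made, the remainder is bookkeeping with the standard elliptic-function derivative formulas.
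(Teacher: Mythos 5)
Your proof is correct, and it takes a genuinely different route from the paper's. The paper applies its comparison Lemma~\ref{lem4} \emph{once, directly to $f_y$}, with the comparison function $g(p)=k^2\big(1-k^2(1-\cn^4 p)\big)$ --- which in your notation is exactly $k^2 w$ --- and the whole lemma reduces to the single computation $\big(f_y/g\big)'>0$ together with the expansion $f_y/g=\tfrac43\alpha^2p^3+o(p^3)$. You instead normalize $f_y=16\ae^4k^2\,\Phi$, differentiate $\Phi$ itself, factor $\Phi'=4k^2\cn^2 p\,\dn p\,\sn p\cdot G(p)$, and only then invoke Lemma~\ref{lem4}, applied to the much simpler scalar $G=\dn p\,\sn p-\cn p\,b$ with the comparison function $\cn p$, for which $(G/\cn p)'=(1-k^2)/\cn^2 p$ is transparently positive. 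The two arguments hinge on the same two identities ($w'=-4k^2a$ and $-a'+\cn^2 p\,w=4\cn^2 p\,\dn^2 p\,\sn^2 p$) --- the paper's choice of $g=k^2w$ encodes them implicitly in one stroke, while your two-stage monotonicity argument makes them explicit. What the paper's version buys is brevity (one derivative, one application of the lemma); what yours buys is that every intermediate quantity is elementary and each positivity claim is checked against a manifestly positive closed form, so no step requires guessing a nonobvious comparison function for $f_y$ itself. All of your computations ($\alpha^2=16\ae^4k^2(1-k^2)$, $b'=k^2\cn^2 p$, the factorization of $\Phi'$, and the telescoping of $(G/\cn p)'$) check out.
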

\begin{proof}
We show that the function $g(p) = k^2  \big(1 - k^2 (1 - \cn^4 p)\big)$ is a comparison function for $f_y(p)$ for $p \in (0,\K)$.

The inequality $f_y(p)\not \equiv 0$ follows from the expansion $f_y(p) = \frac{4}{3}\alpha^2 k^2 p^3 + o(p^3)$.
Notice that $g(p) > 0$ for $p \in (0, \K)$. Finally we get the equalities
\begin{align*}
& \bigg(\frac{f_y(p)}{g(p)}\bigg)' = \frac{4 \alpha^2 \sn^2 p \cn^2 p \dn p}{1 - k^2 (1 - \cn^4 p)}>0, \quad \frac{f_y(p)}{g(p)} = \frac{4}{3} \alpha^2 p^3  + o(p^3).
\end{align*}
So $g(p)$ is a comparison function for $f_y (p)$; thus, it follows from Lemma~\ref{lem4} that $f_y (p) > 0$ for $p \in  (0, \K)$.
\end{proof}

\begin{theorem}
$\MAX^2 \cap N^\pm = \emptyset.$
\end{theorem}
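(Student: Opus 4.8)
The claim $\MAX^2 \cap N^\pm = \emptyset$ is the immediate payoff of the analytic work just completed, so my plan is to combine the structural description of $\MAX^2$ from~(\ref{max2t}) with the sign result of Lemma~\ref{lem5t}. Recall that the parametrization~(\ref{ptaut}) gives $p = \ae t / 2$, and since $(\lambda, t) \in N^\pm$ means $t \in \big(0, \tsupr(\lambda)\big)$ with $\psi_t \in (-\K, \K)$, the parameter $p$ ranges precisely over the open interval $(0, \K)$; in particular $p$ is never zero on $N^\pm$. This is the range hypothesis under which Lemma~\ref{lem5t} applies.

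First I would restate the characterization~(\ref{max2t}): on $N^\pm$ a point $\nu = (\tau, p, E, \alpha)$ lies in $\MAX^2$ if and only if $f_y(p) = 0$ with $p \in (0, \K)$. This reduction is exactly what makes the theorem trivial to close, because it converts the vanishing condition $y(\nu) = 0$ (together with $\tau \neq 0$, $\tau \in (-\K, \K)$) into a single equation on the factor $f_y(p)$. The point is that in the expression for $y$ above~(\ref{max2t}), the denominator $\alpha |\alpha| \ae^2 k^2 (\cn^2 \tau - \dn^2 \tau \sn^2 p)(1 - k^2 \sn^2 p \sn^2 \tau)$ is nonzero on $N^\pm$ and the prefactor $\sn \tau \cn \tau \dn \tau$ vanishing would force $\tau = 0$ (excluded), so $y$ vanishes precisely when $f_y(p) = 0$.

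Then I would invoke Lemma~\ref{lem5t} directly: it asserts $f_y(p) > 0$ for all $p \in \big(0, \K(k)\big)$ and all $k \in (0,1)$. Since every point of $N^\pm$ has $p \in (0, \K)$ and some admissible modulus $k \in (0,1)$, the equation $f_y(p) = 0$ has no solution on $N^\pm$. Hence the defining condition for $\MAX^2 \cap N^\pm$ in~(\ref{max2t}) is never met, which gives $\MAX^2 \cap N^\pm = \emptyset$.

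I do not anticipate any genuine obstacle here, since the substance of the argument has been front-loaded into Lemma~\ref{lem5t} and the algebraic factorization~(\ref{max2t}). The only point requiring a word of care is to confirm that the excluded endpoints $p = 0$ and $p = \K$ really do lie outside the admissible range on $N^\pm$, so that the strict positivity on the open interval suffices; this follows from the bound~(\ref{supr}) defining $\tsupr$, which ensures $\psi_t < \K$ and hence $p < \K$, together with $t > 0$ giving $p > 0$.
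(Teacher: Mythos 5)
Your proposal is correct and follows exactly the paper's own route: the paper's proof is the one-line observation that the claim ``follows immediately from description of Maxwell set~(\ref{max2t}) and Lemma~\ref{lem5t},'' which is precisely the combination you spell out. Your extra remarks on the nonvanishing of the denominator and the exclusion of the endpoints $p=0$, $p=\K$ are consistent with, and merely elaborate on, what the paper leaves implicit.
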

\begin{proof}
Follows immediately from description of Maxwell set~(\ref{max2t}) and Lemma~\ref{lem5t}.
\end{proof}

We proved that a general geodesic with $\lambda \in C^\pm$ has no Maxwell times $\tmax(\lambda) \in \big(0, \tsupr(\lambda)\big)$ corresponding to the symmetries $\varepsilon^i, i = 1, 2, 3$. So we conjecture that geodesics $q(t) = \Exp(\lambda, t)$ with $\lambda \in C^\pm$ are optimal for $t\in \big(0, \tsupr(\lambda)\big)$.

\section{Spacelike normal extremal trajectories}
In the spacelike case $H=\frac{1}{2}\big(-h_1^2+h_2^2\big) > 0,$  we consider extremals on the level surface $H= 1/2$ and introduce new coordinates on this surface:
\begin{align*}
&h_1 = \sinh \theta,\quad \ h_2 = \pm \cosh \theta, \quad h_3=c, \quad \ h_4 = \alpha.
\end{align*}

Notice that we have the following symmetry of the Hamiltonian system:
\begin{align}
\varepsilon^0 : (h_1, h_2, h_3, h_4, x_1, x_2, y, z) \mapsto (h_1, -h_2, -h_3, -h_4, x_1, -x_2, -y, -z),
\end{align}
So we assume  $h_2 = \cosh \theta > 0$ without loss of generality in the sequel.

In the variables $(\theta, c, \alpha, x_1, x_2, y, z)$ on the level surface $\{H = 1/2\}$ the Hamiltonian system of Pontryagin maximum principle takes the following form in the normal case for spacelike curves:
\begin{align}
&\dot{x}_1 = -\sinh \theta, \label{dotx1}\\
&\dot{x}_2 = \cosh \theta, \label{dotx2}\\
&\dot{y} = \frac{x_2 \sinh \theta + x_1 \cosh \theta}{2}, \label{doty}\\
&\dot{z} = \frac{x_1^2+x_2^2}{2}\cosh \theta , \label{dotz}\\
&\dot{\theta} = -c, \label{dotth}\\
&\dot{c} = -\alpha \sinh \theta, \label{dotc}\\
&\dot{\alpha} = 0. \label{dota}
\end{align}

\begin{figure}[htbp]
\centering
\includegraphics[width=0.47\linewidth]{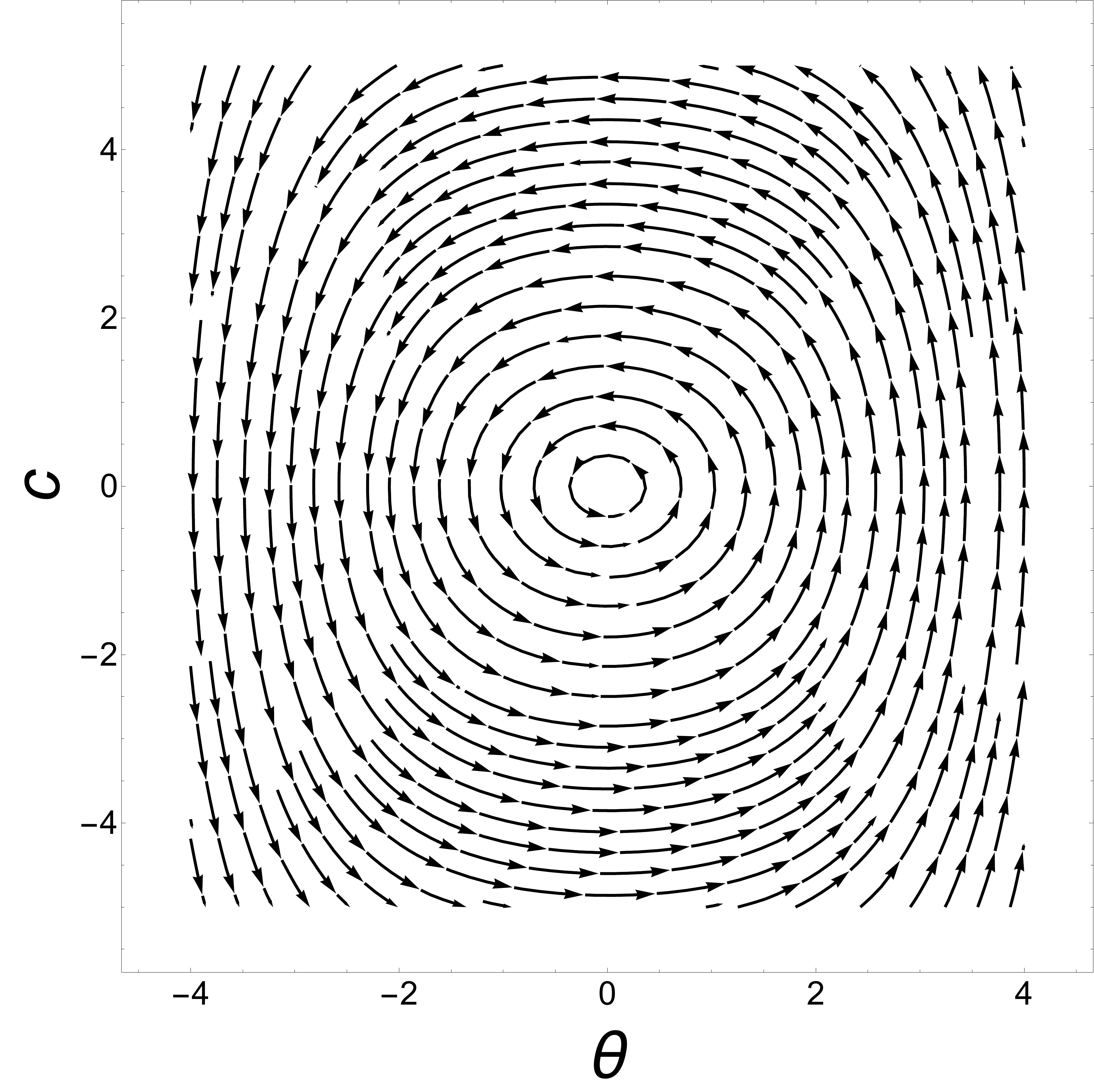}
\qquad
\includegraphics[width=0.47\linewidth]{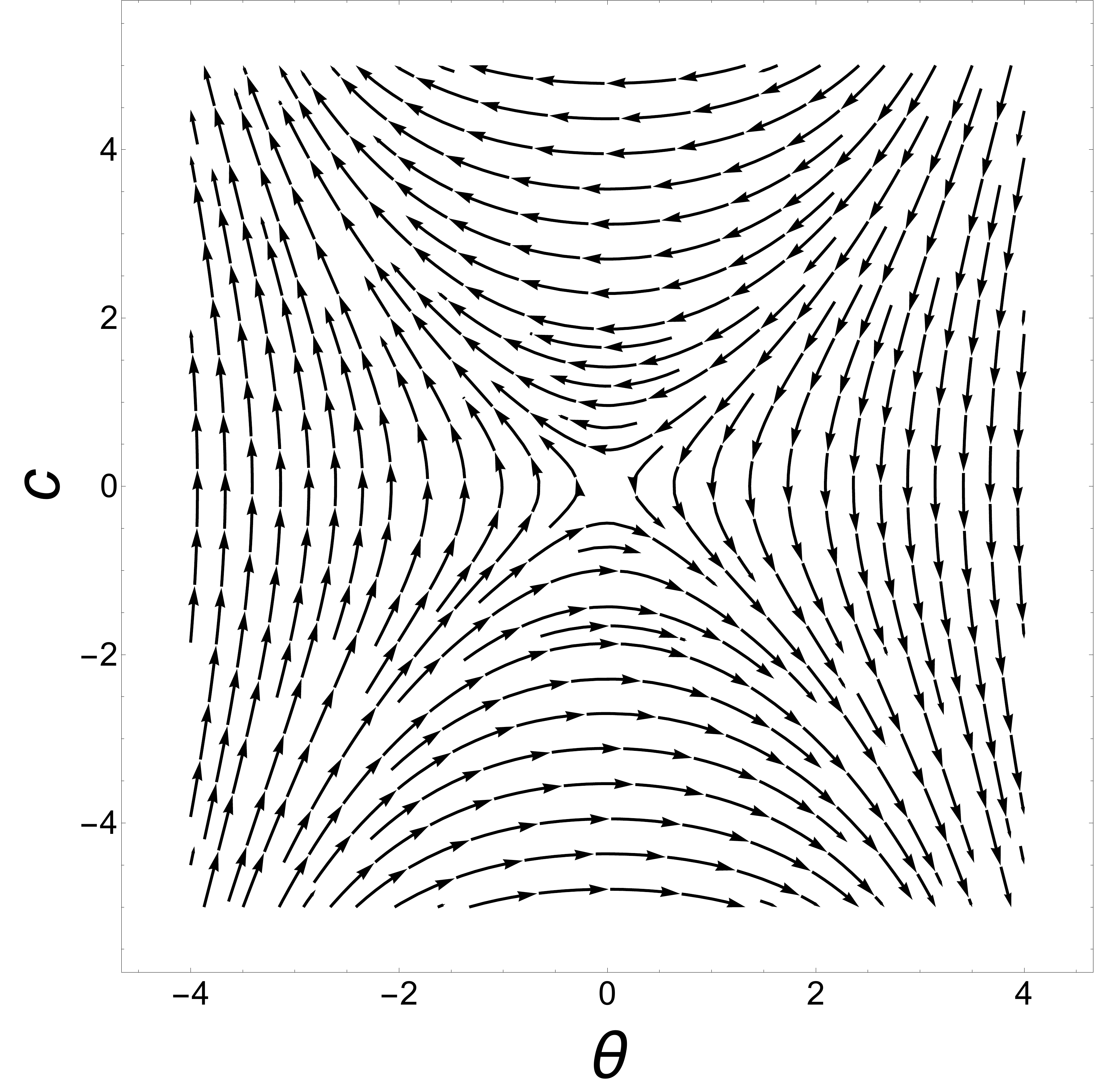}
\caption{Phase portraits of the vertical subsystem for $\alpha = -1, \ 1$ (spacelike case)}\label{portrait+-}

\end{figure}
The vertical subsystem~(\ref{dotth})--(\ref{dota}) has the energy integral:
\begin{align}
&E = \frac{h_3^2}{2} - h_2 h_4 = \frac{c^2}{2} - \alpha \cosh \theta.
\end{align}
The family of all normal extremals is parametrized by points of the set

\begin{align*}
C&=T_{q_0}^* \cap \{H = 1/2, h_2 > 0\} = \big\{(h_1, h_2, h_3, h_4) \in \R^4 ~ | ~ h_2^2-h_1^2=1, h_2>0\big\}\\
&=\big\{(\theta, c, \alpha) \in \R^3\big\}.
\end{align*}

The set $C$ has the following decomposition into invariant sets of equations~(\ref{dotth})--(\ref{dota}):

\begin{align*}
C &= \cup_{i=1}^7 C_i,   \quad  C_i \cap C_j = \emptyset, \ i \neq j,  \qquad \lambda = (\theta, c, \alpha), \\
C_1&= \{\lambda \in C ~ | ~ \alpha < 0, E >-\alpha\}, \\
C_2&=\{\lambda \in C ~ | ~ \alpha > 0, E < -\alpha\}, \\
C_3&=\{\lambda \in C ~ | ~ \alpha > 0, E > - \alpha\}, \\
C_4&=\{\lambda \in C ~ | ~ \alpha > 0, E = -\alpha, \ c\neq 0, \theta \neq 0\}, \\
C_5&=\{\lambda \in C ~ | ~ \alpha \neq  0, E = - \alpha, \ c = 0, \theta = 0\}, \\
C_6&=\{\lambda \in C ~ | ~ \alpha = 0, \ E > 0, c \neq 0\},\\
C_7&=\{\lambda \in C ~ | ~ \alpha = 0, \ E = 0, c = 0\}.
\end{align*}

In order to parameterize extremals, we introduce coordinates $(\varphi, E, \alpha)$ on the set $\bigcup_{i=1}^3 C_i$ in the following way.

\begin{figure}[htbp]
\centering
\includegraphics[width=0.47\linewidth]{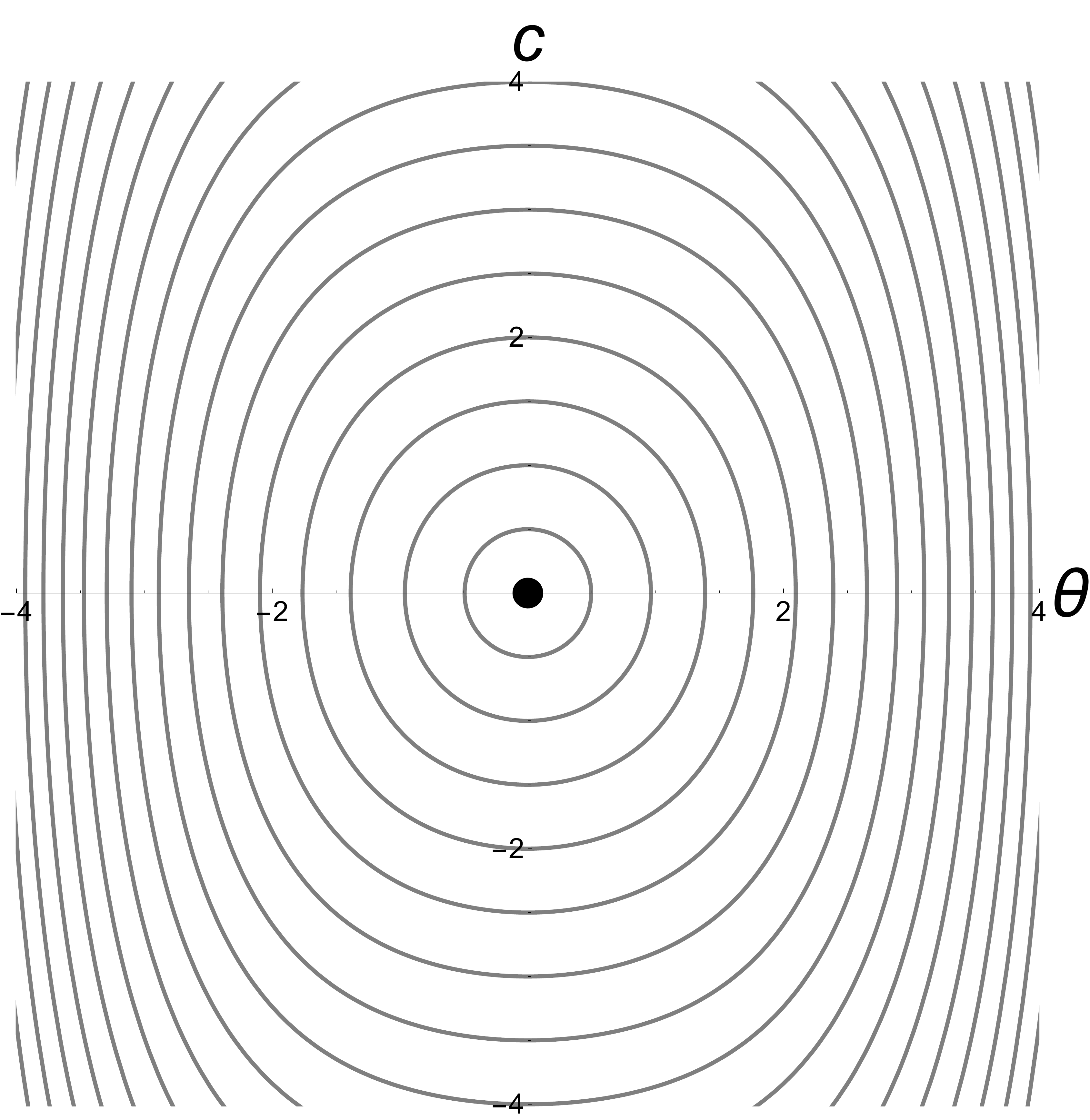}
\qquad
\includegraphics[width=0.47\linewidth]{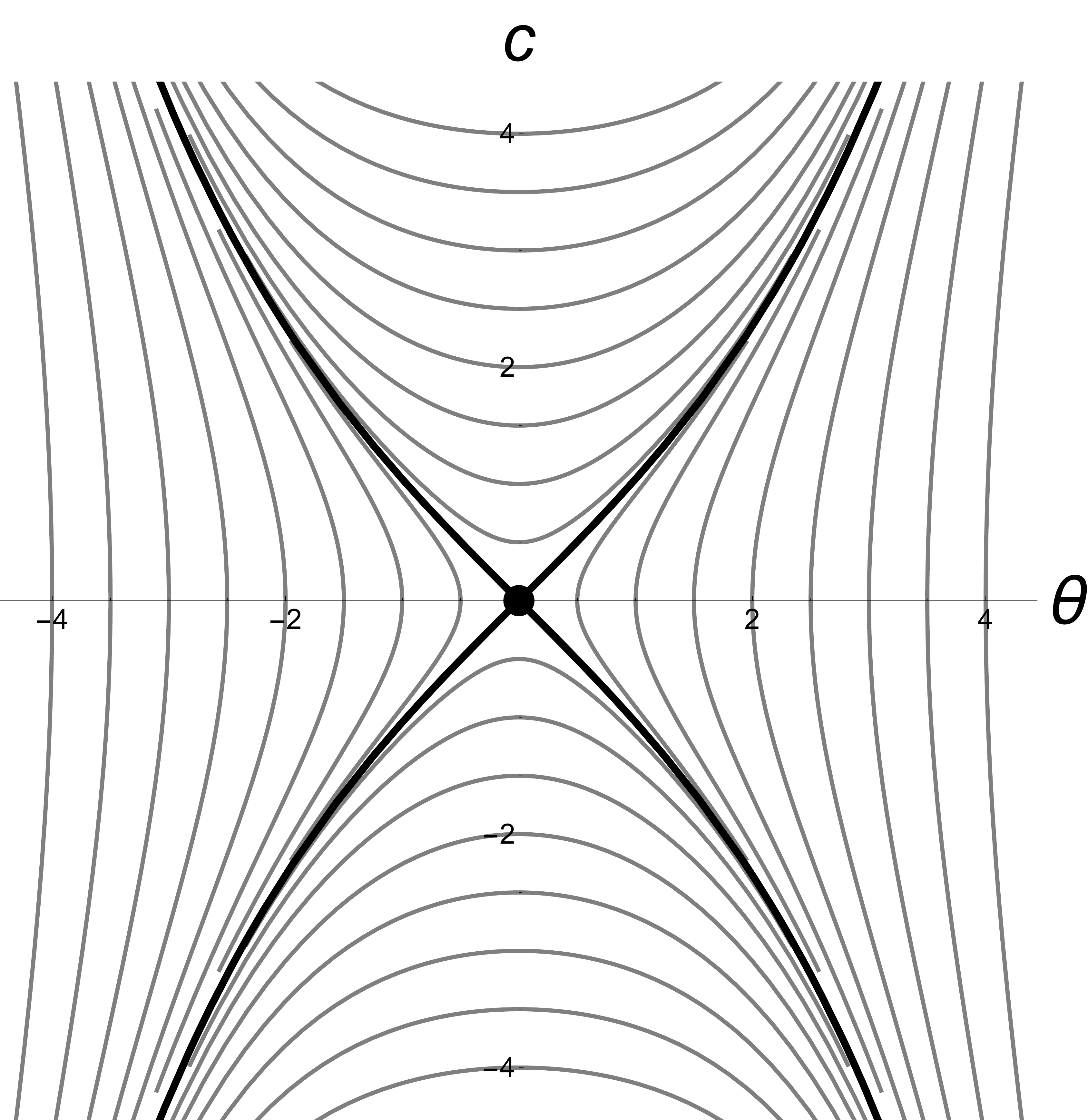}
\caption{Trajectories of the vertical subsystem for $\alpha = -1, \ 1$ (spacelike case)}\label{a+-}
\end{figure}

In the domain $C_1$:
\begin{align*}
k&=\sqrt{\frac{E+\alpha}{E-\alpha}}\in(0,1), \qquad &&\ae=\sqrt{\frac{E-\alpha}{2}}, \\
c&= \sqrt{2 (E+\alpha)} \sn (\ae \varphi), \qquad &&\varphi \in \R, \\
\sinh \theta &= \frac{\sqrt{E-\alpha} \ \sqrt{E+\alpha} \cn (\ae \varphi) \dn (\ae \varphi)}{\alpha}, \qquad &&\cosh \theta = 1-\frac{(E+\alpha) \cn^2 (\ae \varphi)}{\alpha}.
\end{align*}

In the domain $C_2$:
\begin{align*}
k&=\sqrt{\frac{2 \alpha}{\alpha-E}}\in(0,1), \qquad &&\ae = \sqrt{\frac{\alpha-E}{2}},\\
c&=\sgn \theta \frac{\sqrt{2(-\alpha-E)} \sn(\ae \varphi)}{\cn(\ae \varphi)}, \qquad &&\varphi \in \Big(-\frac{\K}{\ae}, \frac{\K}{\ae}\Big),\\
\sinh \theta &= \sgn \theta \frac{\sqrt{-\alpha-E} \ \sqrt{\alpha-E} \dn(\ae \varphi)}{\alpha \cn^2(\ae \varphi)}, \qquad
&&\cosh \theta = 1 - \frac{\alpha+E}{\alpha \cn(\ae \varphi)}.
\end{align*}

In the domain $C_3$:
\begin{align*}
k&=\sqrt{\frac{E-\alpha}{E+\alpha}}\quad k_2 = k^2\in(-\infty, 1), \qquad &&\ae = \sqrt{\frac{E+\alpha}{2}}, \\
c&=\sgn c \frac{2 \ae \dn (\ae \varphi)}{\cn(\ae \varphi)}, \qquad &&\varphi \in \Big(-\frac{\K}{\ae}, \frac{\K}{\ae}\Big),\\
\sinh \theta &= \sgn c \frac{2 \sn (\ae \varphi)}{\cn^2(\ae \varphi)}, \qquad && \cosh \theta = \frac{1+\sn^2(\ae \varphi)}{\cn^2(\ae \varphi)}.
\end{align*}

\begin{remark}
If $k^2<0$, then $\ds k = i \frac{\tilde{k}}{\sqrt{1-\tilde{k}^2}},\tilde{k}=\sqrt{\frac{-k^2}{1-k^2}}\in (0,1).$ Using formulas from $ \cite{ahiez} $ we have
\begin{align}
\tilde{k} &= \sqrt{\frac{\alpha - E}{2 \alpha}}, \quad \sqrt{1- \tilde{k}^2} = \sqrt{\frac{\alpha+E}{2 \alpha}}; \quad &\dn (\ae \varphi, k) &= \frac{1}{\dn(\sqrt{\alpha} \varphi, \tilde{k})},\\
\sn (\ae \varphi, k) &= \frac{\sqrt{1-\tilde{k}^2} \sn (\sqrt{\alpha} \varphi, \tilde{k})}{\dn(\sqrt{\alpha} \varphi, \tilde{k})},
  \qquad &\cn (\ae \varphi, k) &= \frac{\cn (\sqrt{\alpha} \varphi, \tilde{k})}{\dn (\sqrt{\alpha} \varphi, \tilde{k})}.
\end{align}
\end{remark}

In the domain $C_4$:
\begin{align*}
c&= \sgn \theta \frac{4 \alpha e^{\ae \varphi}}{\alpha e^{2 \ae \varphi}-1}, \qquad && \ae = \sqrt{\alpha}, \quad \varphi \in \R, \ \alpha e^{2 \ae \varphi} \neq 1, \\
\sinh \theta &= \sgn \theta \frac{4 \ae e^{\ae \varphi} (\alpha e^{2 \ae \varphi}+1)}{(\alpha e^{2 \ae \varphi}-1)^2}, \qquad &&\cos \theta = \frac{\alpha e^{2 \ae \varphi}(\alpha e^{2 \ae \varphi}+6)+1}{\alpha e^{2 \ae \varphi}-1}.
\end{align*}

Immediate differentiation shows that in the coordinates $(\varphi, E, \alpha)$ the subsystem for the costate variables~(\ref{dotth})--(\ref{dota}) takes the following form:
\begin{align}
\dot{\varphi} = 1, \qquad \dot{E}=0, \qquad \dot{\alpha}=0,
\end{align}
so that it has solutions
\begin{align}
\varphi(t) = \varphi_t = \varphi_0+t, \qquad E=\const, \qquad \alpha = \const.
\end{align}

\subsection{Exponential mapping for spacelike normal extremals}
Denote $\psi_0 = \ae \varphi_0, \psi_t = \ae \varphi_t$. From the definition of the variables $c$ and $\theta$ we obtain the following parametrization of extremal trajectories.

If $\lambda \in C_1$, then
\begin{align}
x_1 (t) &= \frac{\sqrt{2} \sqrt{\alpha + E}}{\alpha} \Big(\sn \psi_t-\sn \psi_0\Big), \\
x_2 (t) &= -\frac{2 \ae}{\alpha} \Big(\E (\psi_t)- \E(\psi_0)\Big) - t, \\
y (t) &= \frac{\sqrt{\alpha + E}}{\sqrt{2} \alpha^2} \bigg(2 \ae \Big(\cn \psi_t \dn \psi_t-\cn \psi_0 \dn \psi_0 + \big(\E (\psi_t) - \E(\psi_0)\big) \big(\sn \psi_t + \sn \psi_0\big)\Big) \nonumber\\
&\qquad+  \alpha \Big(\sn \psi_t + \sn \psi_0\Big) t\bigg), \\
z(t) &=  \frac{\big(x_2(t)\big)^3}{6} + \frac{1}{3 \alpha^3}\bigg(2 \ae \Big(\big(\alpha + E\big) \big(\cn \psi_t \dn \psi_t (\sn \psi_t - \sn \psi_0) \nonumber\\
     &\qquad-2 (\cn \psi_t \dn \psi_t - \cn \psi_0 \dn \psi_0) \sn\psi_0\big) - \big(E + 3 (\alpha + E) \sn^2 \psi_0\big) \big(\E(\psi_t) \nonumber\\
		&\qquad- \E(\psi_0)\big)\Big) + \alpha \Big(\alpha - E - 3 \big(\alpha + E\big) \sn^2 \psi_0\Big) t\bigg).
\end{align}
Projections of extremals to the plane $(x_1, x_2)$ with $|\alpha|=1$ and $\psi_0 = 0$ are shown in Fig.~\ref{c123}, left. This case has no upper bounds for  time: $\tsupr (\lambda) = +\infty$.

\begin{figure}[htbp]
\centering
\includegraphics[width=0.175\linewidth]{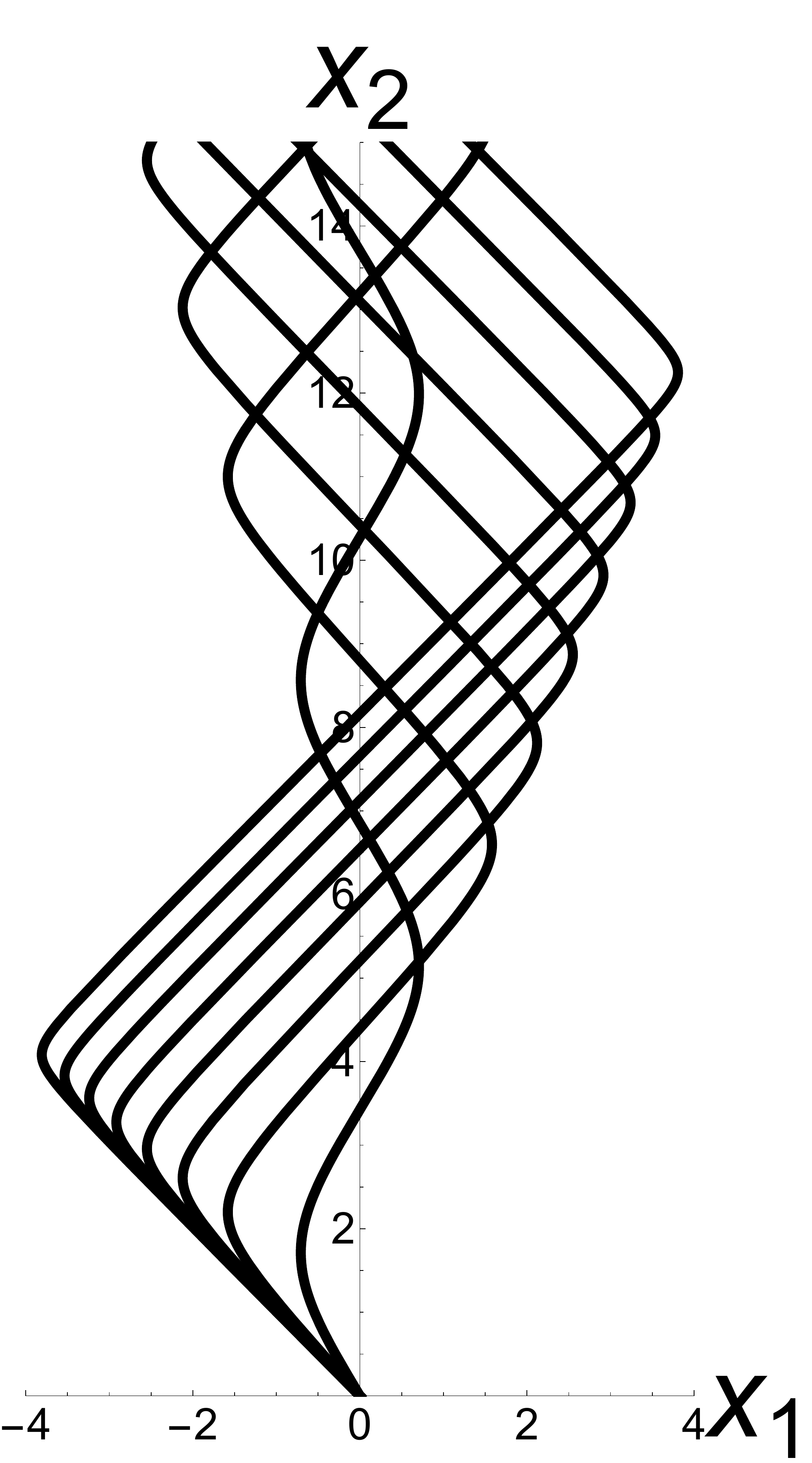}
\,
\includegraphics[width=0.39\linewidth]{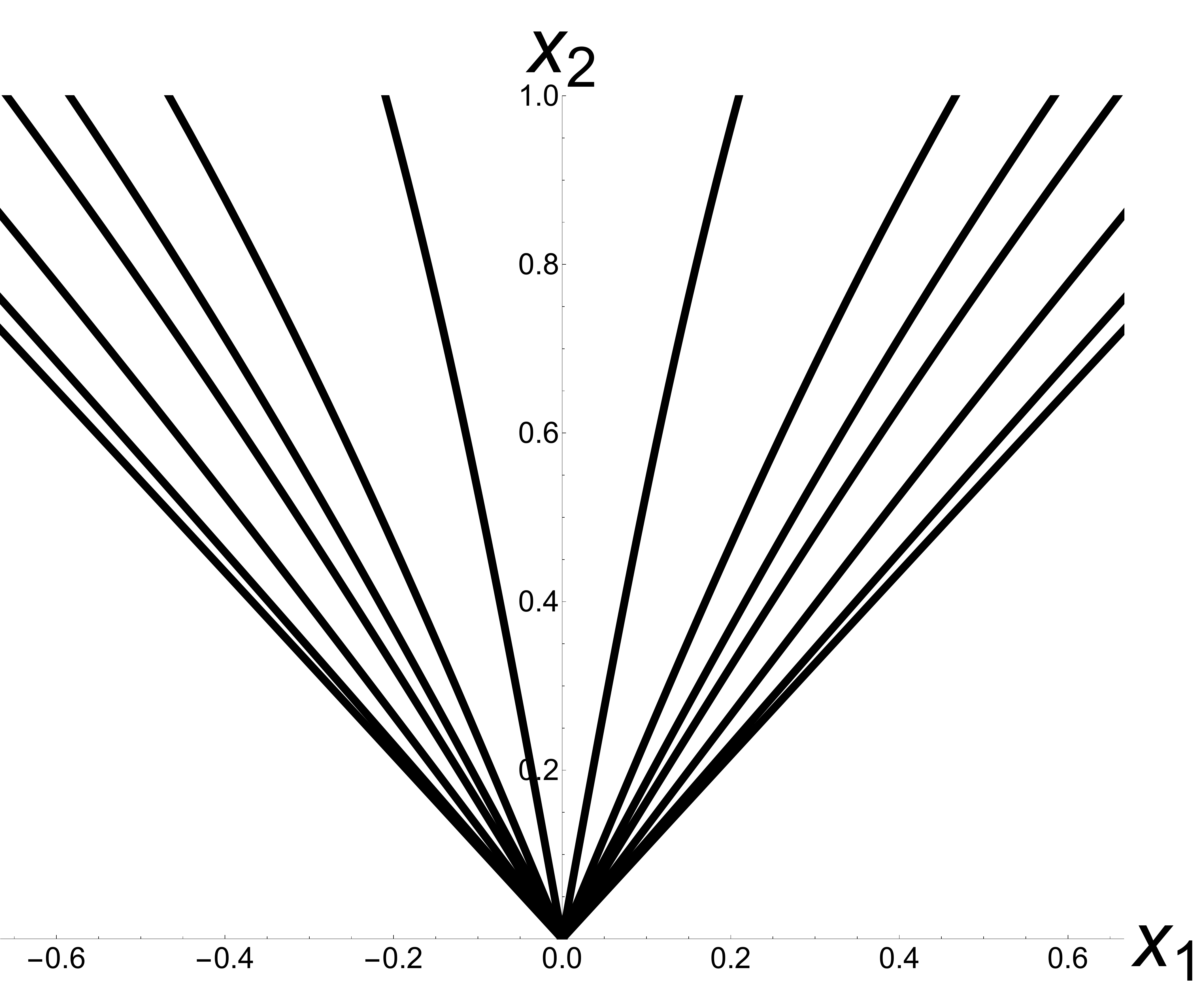}
\,
\includegraphics[width=0.39\linewidth]{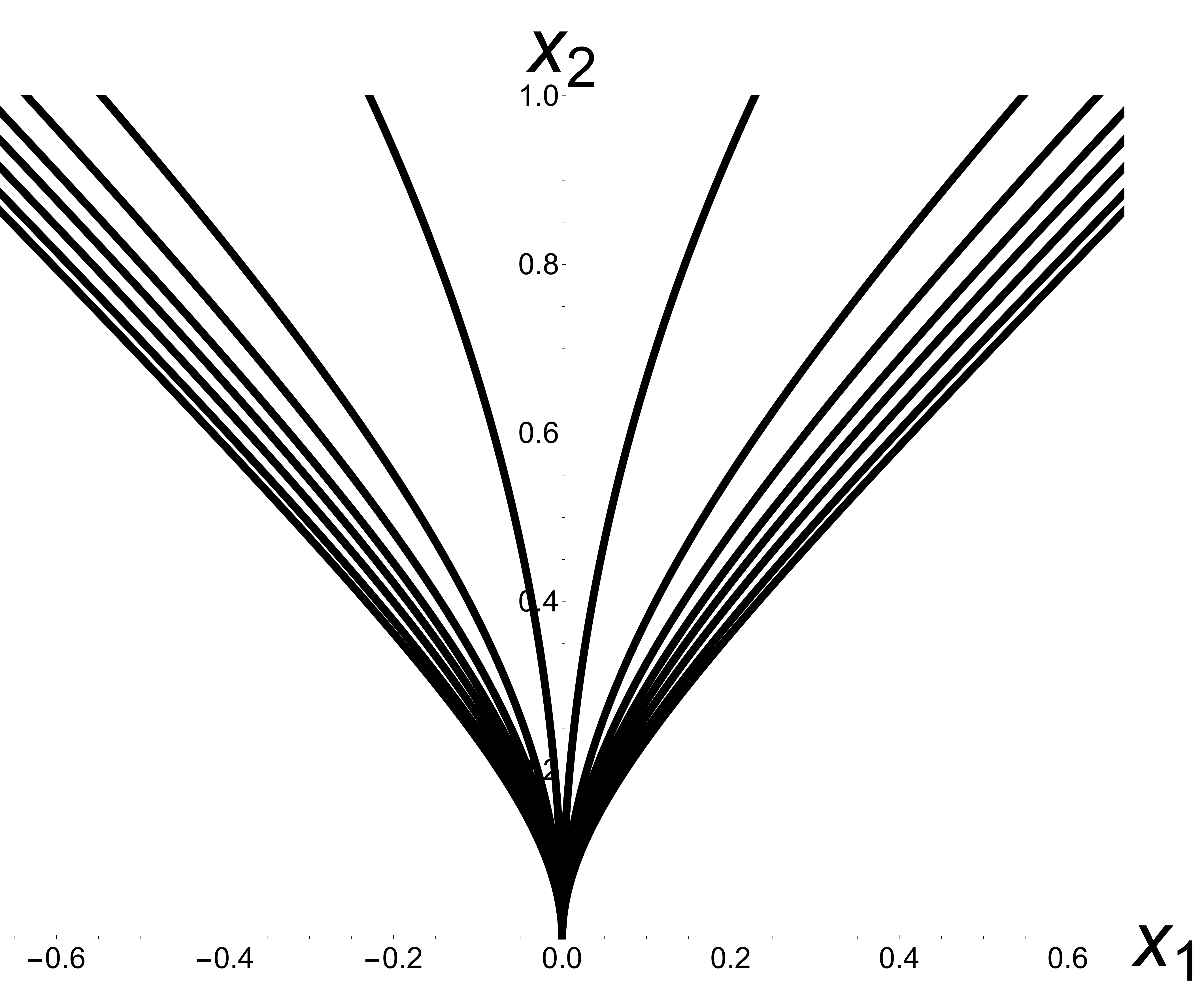}
\caption{Projections of goedesics to the plane $(x_1,x_2)$ for $\lambda \in C_1$, $\lambda \in C_2$ and $\lambda \in C_3$ (spacelike case)}\label{c123}
\end{figure}

If $\lambda \in C_2$, then
\begin{align}
x_1 (t) &= -\frac{\sqrt{2} \sqrt{-\alpha - E} \sgn \theta}{\alpha} \Big(\ssc \psi_t - \ssc \psi_0\Big) ,\\
x_2 (t) &= -\frac{2 \ae \big( \E(\psi_t) - \E (\psi_0) + \dn \psi_0 \ssc \psi_0 - \dn \psi_t \ssc \psi_t\big) + E t}{\alpha},\\
y (t) &= -\frac{\sqrt{-\alpha - E} \sgn \theta}{\sqrt{2} \alpha^2} \bigg(E \Big(\ssc \psi_0 + \ssc \psi_t\Big) t+2 \ae \Big( \big( \ssc \psi_0 + \ssc \psi_t\big) \big(\E (\psi_t) - \E(\psi_0)\big) \nonumber\\
  &\qquad+ \big(\dn \psi_t - \dn \psi_0\big) \big(1 - \ssc \psi_0 \ssc \psi_t\big)\Big)\bigg),\\
z (t) &= \frac{\big(x_2(t)\big)^3}{6}+\frac{2 \ae \big(3 (\alpha + E) \sn^2 \psi_0 - E \cn^2 \psi_0\big)}{3 \alpha^3 \cn^2\psi_0} \Big(\E(\psi_t) - \E(\psi_0)\Big) \nonumber\\
&\qquad+ \frac{1}{3 \alpha^3 \ae \cn^3 \psi_t \cn^3 \psi_0} 
\Bigg(2 \ae\bigg(\cn \psi_t \Big(\alpha \big( \cn^2 \psi_t \dn \psi_0 (1 - 3 \cn^2 \psi_0) \nonumber\\
&\qquad+ 3 \cn^2 \psi_0 \dn \psi_t\big) + \Big(\cn^2 \psi_t \dn \psi_0 \big(1 - 4 \cn^2 \psi_0\big) + 3 \cn^2 \psi_0 \dn \psi_t\Big) E\bigg) \sn \psi_0  \nonumber\\
&\qquad- \cn \psi_0 \dn \psi_t \bigg(\alpha \cn^2 \psi_0+ 3 \alpha \sn^2 \psi_0 \cn^2 \psi_t + \Big(3 \cn^2 \psi_t + \cn^2 \psi_0 \big(1 \nonumber\\
&\qquad- 4 \cn^2 \psi_t\big)\Big) E\bigg) \sn \psi_t\Bigg)
+\frac{\big(\alpha + E\big) \big(\cn^2 \psi_0 (\alpha - 4 E) + 3 E\big)}{3 \alpha^3 \cn^2 \psi_0}t.
\end{align}
Projections of extremals to the plane $(x_1, x_2)$ with $|\alpha|=1$ and $\psi_0 = 0$ are shown in Fig.~\ref{c123}, center.  Notice that since $\psi_t \in(-\K, \K)$ we have the upper bound for the time parameter:
\begin{align}
t < \tsupr (\lambda) = \frac{\K(k) - \psi_0}{\ae}. \label{supr2}
\end{align}

If $\lambda \in C_3$, then
\begin{align}
x_1 (t) &= \frac{2 \sgn c (\cn \psi_0 \dn \psi_t - \cn \psi_t \dn \psi_0)}{\ae \cn \psi_t \cn \psi_0 (1 - k_2)}, \\
x_2 (t) &= \frac{2}{\ae (1 - k_2)} \Big(\dn \psi_t \ssc \psi_t - \dn \psi_0 \ssc \psi_0 - \E(\psi_t)+\E(\psi_0)\Big) + t, \\
y (t)   &= \frac{\sgn c}{\ae (1 - k_2)} \bigg(\frac{2}{\ae (1 - k_2)} \Big(\big(\dc \psi_t + \dc \psi_0\big) \big(\E(\psi_t) - \E(\psi_0)\big) \nonumber\\
 &\qquad+ \big(\dc \psi_0 \dc \psi_t + k_2\big) \big(\sn \psi_0 - \sn \psi_t\big)\Big) -  \Big(\dc \psi_0 + \dc \psi_t\Big) t \bigg), \\
z(t)    &= \frac{2 \big(\E (\psi_t)-\E(\psi_0)\big) \big(6 - 6 k_2 + \cn^2 \psi_0 (1 + 7 k_2)\big)}{3 \ae^3 \cn^2 \psi_0 (-1 + k_2)^3}+ \frac{2 (3 \dn^2  \psi_0 + \cn^2 \psi_0 )}{3 \ae^2 \cn^2 \psi_0 (-1 + k_2)^2}t \nonumber\\
&\qquad+ \frac{2} {3 \ae^3 \cn^3 \psi_t \cn^3 \psi_0 (-1 + k_2)^3}\Big(\cn^3 \psi_t \dn \psi_0 \big(2 + \cn^2 \psi_0 - 2 k_2 + 7 \cn^2 \psi_0 k_2\big) \nonumber\\
&\qquad \times \sn \psi_0 - 6 \cn^2 \psi_0 \cn \psi_t \dn \psi_0 \big(k_2-1 \big) \sn \psi_t + 2 \cn^3 \psi_0 \dn \psi_t \big(k_2 -1 \big) \sn \psi_t \nonumber\\
&\qquad -\cn \psi_0 \cn^2 \psi_t \dn \psi_t \big(6 - 6 k_2 + \cn^2 \psi_0 (1 + 7 k_2)\big) \sn \psi_t\Big) +\frac{\big(x_2(t)\big)^3}{6}.
\end{align}
Projections of extremals to the plane $(x_1, x_2)$ with $|\alpha|=1$ and $\psi_0 = 0$ are shown in Fig.~\ref{c123}, right. Notice that since $\psi_t \in (-\K,\K)$ we have an upper bound for time parameter: $\tsupr (\lambda) = \frac{\K(k) - \psi_0}{\ae}$.

If $\lambda \in C_4$, then
\begin{align}
x_1(t) &= 4 \sgn \theta\bigg(\frac{e^{\psi_t}}{-1 + \alpha e^{2 \psi_t}}-\frac{e^{\psi_0}}{-1 + \alpha e^{2 \psi_0}}\bigg), \\
x_2(t) &= \frac{4}{\ae}\bigg(\frac{1}{-1 + \alpha e^{2 \psi}}- \frac{1}{-1 + \alpha e^{2 \psi}}\bigg)+t, \\
y(t)   &= - \sgn \theta \frac{2 e^{\psi_0} (-1 + \alpha e^{\psi_t+\psi_0})(2 + \ae t + e^{\psi_t-\psi_0}(\ae t -2))}{\ae (-1 + \alpha e^{2\psi_0} ) (-1 + \alpha e^{2 \psi_t})}, \\
z(t)   &= \frac{x_2^3(t)}{6} + \frac{4}{3}\Bigg( \frac{6 e^{2 \psi_0} t}{(-1 + \alpha e^{2 \psi_0})^2}+\frac{-1 - 9 \alpha e^{2 \psi_0} (-2 + \alpha e^{2 \psi_0})}{\alpha \ae (-1 + \alpha e^{2 \psi_0})^3}\nonumber\\
&\qquad+ \frac{12 \big(1 + \alpha e^{2 \psi_0} (-1 + 2 e^{\psi_t-\psi_0})\big)}{\alpha \ae (-1 + \alpha e^{2 \psi_0}) (-1 + \alpha e^{2 \psi_t})^2}
- \frac{8}{\alpha \ae (-1 + \alpha e^{2 \psi_t})^3} \nonumber\\
&\qquad- \frac{3 \Big(1 + \alpha e^{2 \psi_0} \big(6 + 4 e^{\psi_t-\psi_0} - \alpha e^{2 \psi_0} (-1 + 4 e^{\psi_t-\psi_0})\big)\Big)}{\alpha \ae  (-1 + \alpha e^{2 \psi_0})^2 (-1 + \alpha e^{2 \psi_t})}\Bigg).
\end{align}
Projections of extremals to the plane $(x_1, x_2)$ with $|\alpha|=1$ are shown on the left (for $\psi_0$<0) and in the center (for $\psi_0$<0) in Fig.~\ref{c46}.

Notice that here we have the condition $\ds \alpha e^{2 \sqrt{\alpha} \varphi_t} \neq 1$. Denote $\ds t_0 = -\frac{\ln \alpha}{2 \sqrt{\alpha} - \varphi_0}$. If $t_0 <0$, then there is no upper bound for time parameter: $\tsupr (\lambda) = + \infty$ (see Fig.~\ref{c46}, left). If $t_0>0$, then $\tsupr (\lambda) = t_0$  (see Fig.~\ref{c46}, center).

\begin{figure}[htbp]
\centering
\includegraphics[width=0.32\linewidth]{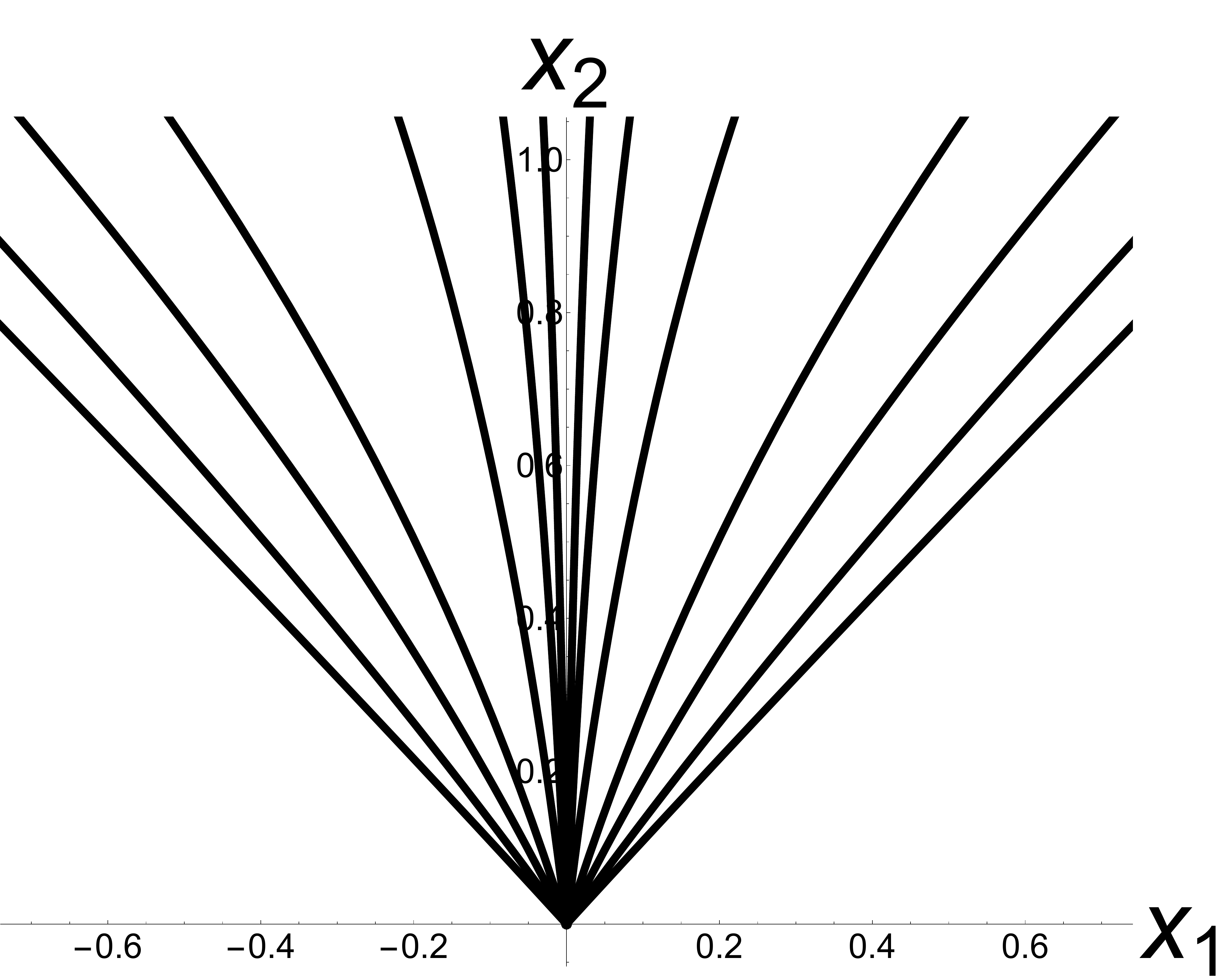}
\
\includegraphics[width=0.32\linewidth]{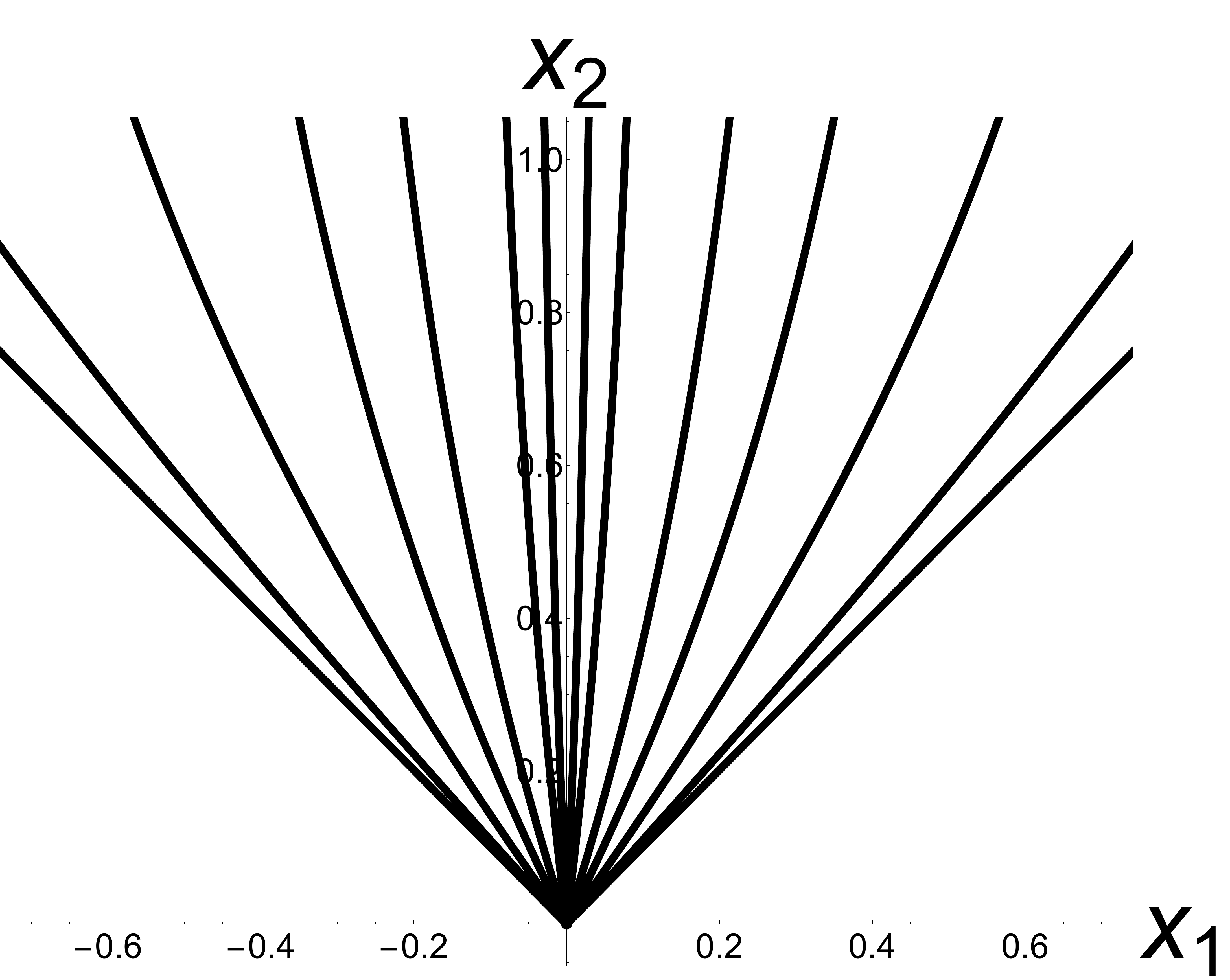}
\
\includegraphics[width=0.32\linewidth]{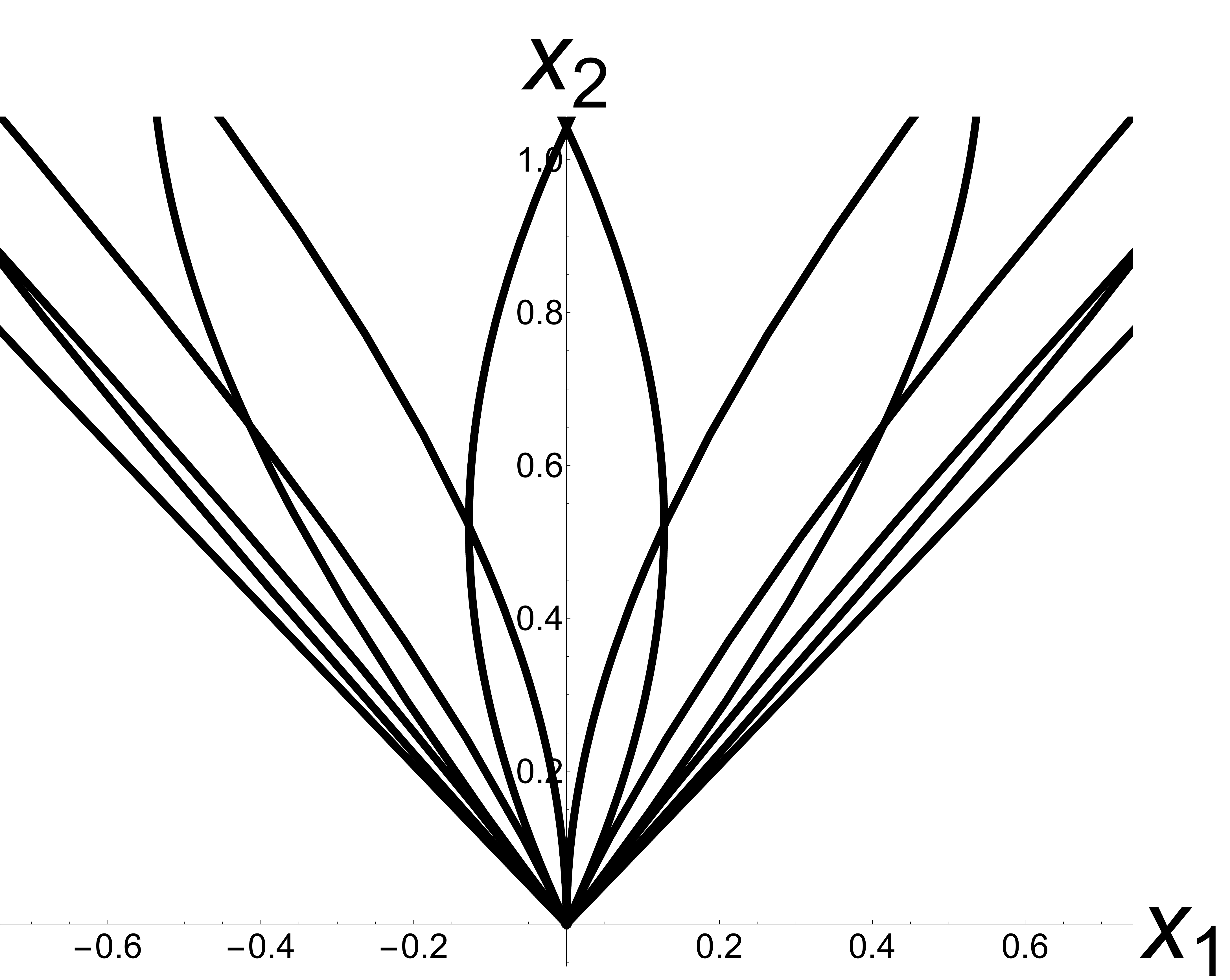}
\caption{Projections of goedesics to the plane $(x_1,x_2)$ for $\lambda \in C_4$ (2 types) and $\lambda \in C_6$ (spacelike case)
}\label{c46}
\end{figure}

In other cases the Hamiltonian system is easy to integrate. If $\lambda \in C_5$, then
\begin{align}
&x_1(t) \equiv 0,\quad x_2(t) = t, \quad y \equiv 0, \quad z = \frac{t^3}{6}.
\end{align}
Projection of the extremal to the plane $(x_1, x_2)$ is the vertical straight line. This case has no upper bounds for the time: $\tsupr = + \infty$.

If $\lambda \in C_6$, then
\begin{align}
&\alpha=0, \qquad  c \equiv \const \neq 0, \quad \theta = \theta_0 - c t, \nonumber\\
x_1(t) &= \frac{\cosh(c t- \theta_0) - \cosh \theta_0}{c}, \\
x_2(t) &= \frac{\sinh (c t - \theta_0) + \sinh \theta_0}{c}, \\
y(t) &= \frac{c t - \sinh (c t)}{2 c^2}, \\
z(t) &= \frac{4 \cosh\big(\frac{3 c t}{2} - 3 \theta_0\big) \sinh^3 \frac{c t}{2} - 3 \cosh \theta_0 \big(c t - \sinh(c t)\big)}{6 c^3}.
\end{align}
Projections of extremals to the plane $(x_1, x_2)$ with $|\alpha|=1$ and $\psi_0 = 0$ (hyperbolas) are shown in Fig.~\ref{c46}, right. This case has no upper bounds for   time: $\tsupr = +\infty$.

If $\lambda \in C_7$, then
\begin{align}
&\alpha=0, \qquad c \equiv 0, \quad \theta \equiv \const, \quad  \sinh \theta = s_0,\quad  \cosh \theta = c_0,\nonumber\\
x_1(t) & =  -s_0 t,  \\
x_2(t) & = c_0 t, \\
y(t)  &\equiv 0, \\
z(t) &=\frac{c_0(1+ 2 s_0^2)}{6} t
\end{align}
Projections of extremals to the plane $(x_1, x_2)$ are straight lines. This case also has no upper bounds for   time: $\tsupr = + \infty$.

So we obtained parameterization of the exponential mapping
\begin{align}
&\Exp: N \to M = \R^4,  \quad \Exp (\lambda, t)=q(t),\\
&N = \{(\lambda, t) \in C \times \R_+ ~ | ~ t \in \big(0,\tsupr(\lambda)\big)\},
\end{align}
in the spacelike case. It maps a pair $(\lambda, t)$ to the point of corresponding extremal $q(t)$.

Further we investigate discrete symmetries of the exponential mapping and on this basis find estimates for the cut time on extremals.

\subsection{Discrete symmetries of exponential mapping}
Subsystem~(\ref{dotth})-(\ref{dota}) for the costate variables of the normal Hamiltonian system has symmetries which preserve the direction field of this system.

\begin{figure}[htbp]
\centering
\includegraphics[width=0.46\linewidth]{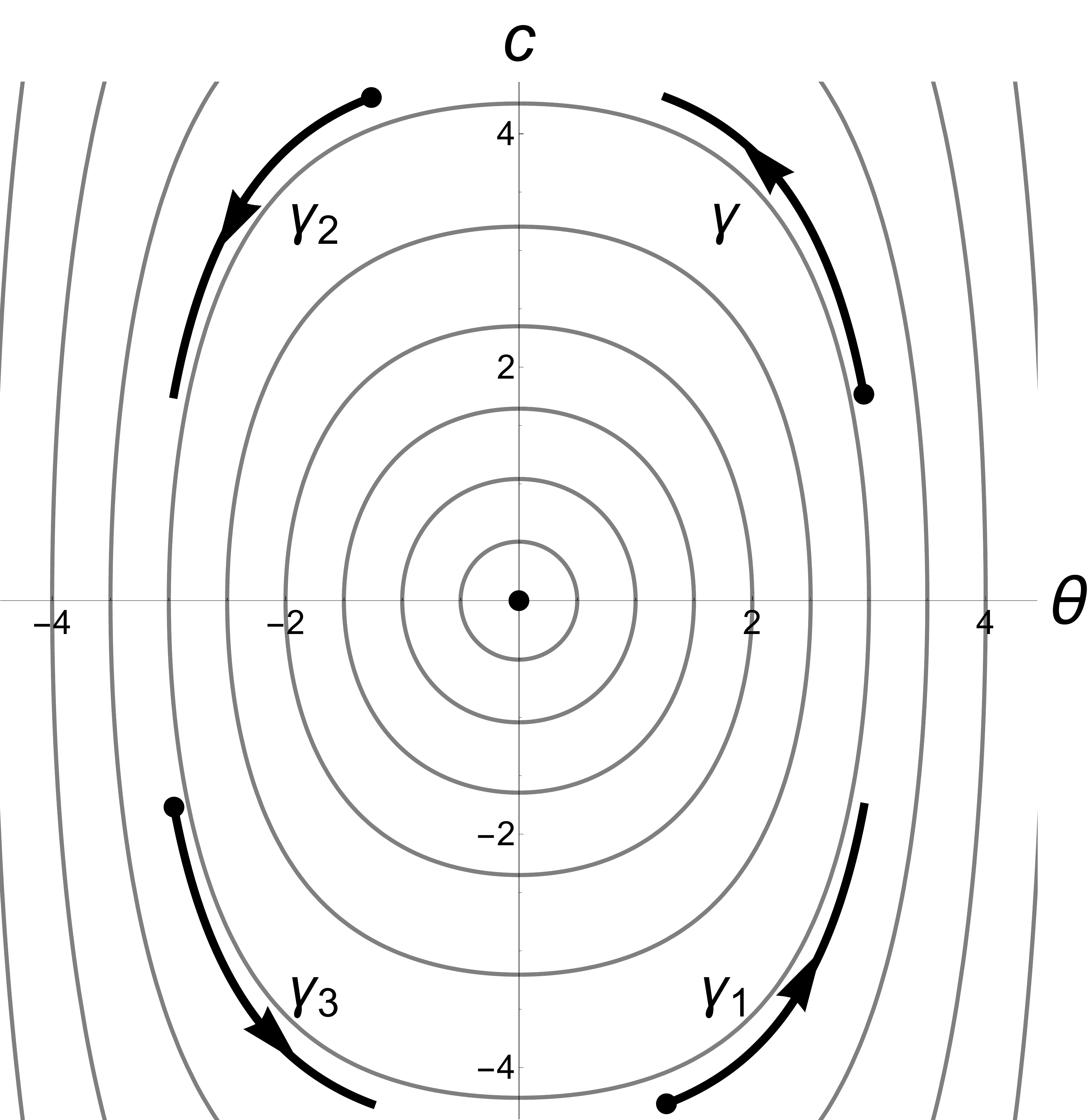}
\qquad
\includegraphics[width=0.46\linewidth]{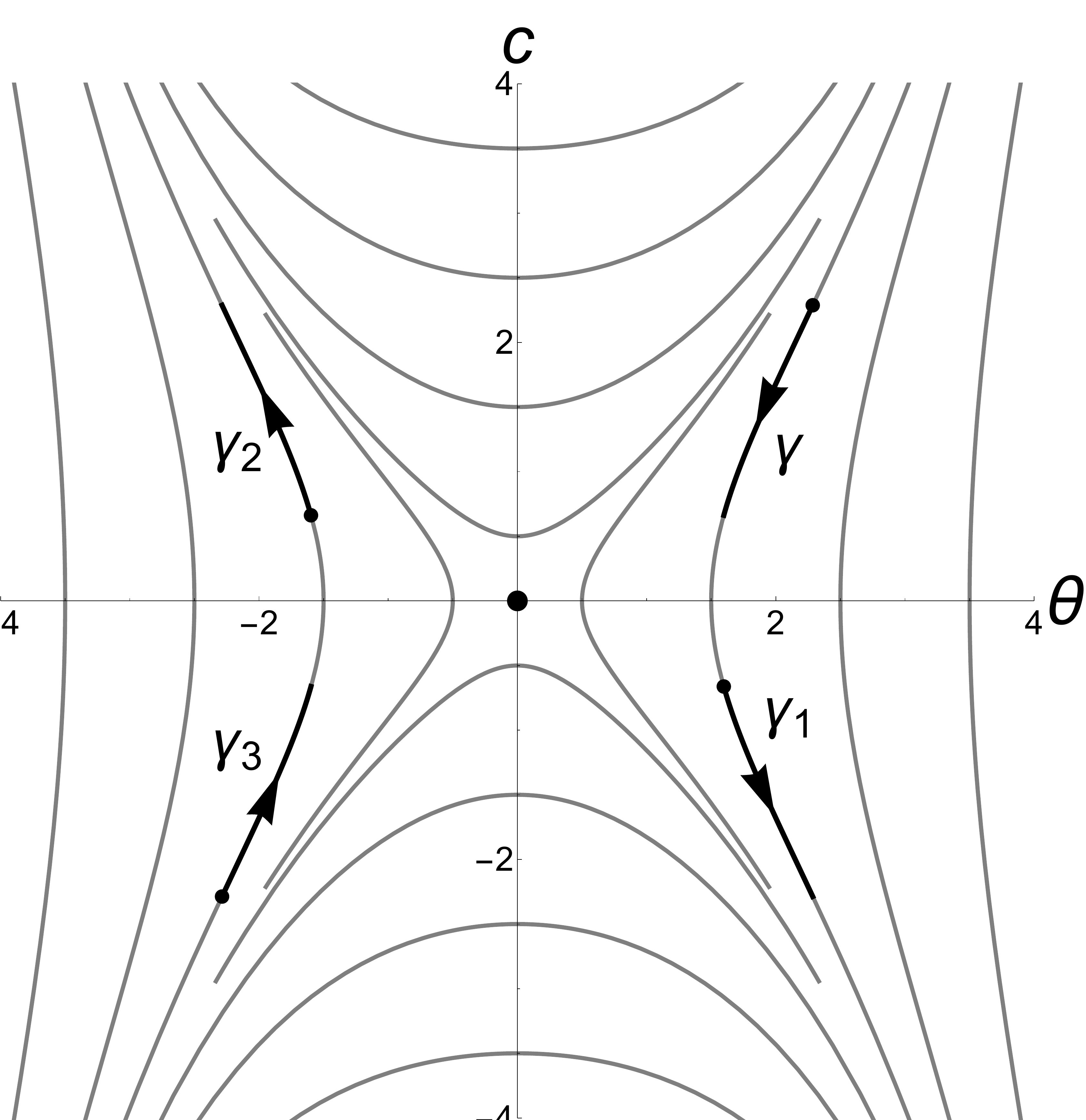}
\caption{Action of symmetries on trajectories of vertical subsystem for $\alpha = -1, 1$ (spacelike case)}\label{reflim}
\end{figure}

Let us describe the action of symmetries $\varepsilon^i$ on the set of trajectories of the vertical subsystem with preservation of time direction. Denote a smooth curve $$\gamma = \Big\{\big(\theta (t), c (t), \alpha \big) ~ | ~ t \in [0,T]\Big\} \subset C.$$  Define action of symmetries on these curves (see. Fig.~\ref{reflim}):
\begin{align*}
& \varepsilon^1 : \gamma \mapsto \gamma_1 = \Big\{\big(\theta^1(t), c^1(t), \alpha^1\big) ~ | ~ t \in [0, T]\Big\} = \Big\{\big(\theta (T-t), -c(T-t), \alpha\big)\Big\}, \\
& \varepsilon^2 : \gamma \mapsto \gamma_2 = \Big\{\big(\theta^2(t), c^2(t), \alpha^2\big) ~ | ~ t \in [0, T]\Big\} = \Big\{\big(-\theta(T-t), c(T-t), \alpha\big)\Big\}, \\
& \varepsilon^3 : \gamma \mapsto \gamma_3 = \Big\{\big(\theta^3(t), c^3(t), \alpha^3\big) ~ | ~ t \in [0, T]\Big\} = \Big\{\big(-\theta(t), -c (t), \alpha\big)\Big\}.
\end{align*}
It is obvious that if $\gamma$ is a solution to the vertical subsystem~(\ref{dotth})-(\ref{dota}), then $\gamma_i, i = 1, 2, 3,$ is a solution as well.

Consider the group of symmetries $G = \{ \mathrm{Id}, \varepsilon^1, \varepsilon^2, \varepsilon^3 = \varepsilon^1 \circ \varepsilon^2\} \cong \Z^2 \times \Z^2$. The symmetry $\varepsilon^3$ preserves direction of time, while $\varepsilon^1$ and $\varepsilon^2$ change it.

Continue the action of symmetries from the vertical subsystem to solutions of the Hamiltonian system of PMP
\begin{align*}
&\dot{\theta} (t) = - c (t), \quad \dot{c}(t) = - \alpha \sinh \theta (t), \quad \dot{\alpha} = 0, \\
&\dot{q} (t) = -\cosh \theta (t) X_1 \big(q(t)\big) + \sinh \theta (t) X_2 \big(q (t)\big),
\end{align*}
in the following way:
\begin{align}
\varepsilon^i: \Big\{\big(\theta (t), c (t), \alpha, q(t)\big) ~ | ~ t \in [0, T]\Big\} \mapsto \Big\{ \big(\theta^i(t), c^i(t), \alpha^i, q^i(t)\big) | t \in [0,T]\Big\}.\label{symact}
\end{align}

Let $q(t) = \big(x_1 (t), x_2 (t), y(t), z(t)\big), \ t \in [0,T],$ be a geodesic and
\begin{align*}
q^i(t) = \big( x_1^i(t), x_2^i (t), y^i (t), z^i(t)\big), \qquad t \in [0,T], \ i = 1, 2, 3,
\end{align*}
be its image under the action of a symmetry $\varepsilon^i$.

\begin{lemma}\label{lemx12} The symmetries $\varepsilon^i$ map trajectories in the plane $(x_1, x_2)$ in the following way:
\begin{align*}
&x_1^1(t) = x_1(T)-x_1(T-t), \qquad x_2^1(t) = x_2(T)-x_2(T-t), \\
&x_1^2(t) = x_1(T-t)-x_1(T), \qquad x_2^2(t) = x_2(T)-x_2(T-t), \\
&x_1^3(t) = -x_1(t), \qquad x_2^3(t) = x_2(t).
\end{align*}
\end{lemma}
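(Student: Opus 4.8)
The plan is to verify each of the six formulas by direct integration of the horizontal equations~(\ref{dotx1})--(\ref{dotx2}), exactly in the spirit of the timelike Lemma~\ref{lem1t}. The only ingredients are the prescribed action of $\varepsilon^i$ on the vertical variable $\theta$, the parities $\sinh(-\theta) = -\sinh\theta$ and $\cosh(-\theta) = \cosh\theta$, and the initial conditions $x_1(0) = x_2(0) = 0$ fixed at the origin.

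I would begin with the time-preserving symmetry $\varepsilon^3$, which is the simplest since it involves no time reversal. Here $\theta^3(t) = -\theta(t)$, so by~(\ref{dotx1})--(\ref{dotx2}) we get $\dot x_1^3 = -\sinh\theta^3 = \sinh\theta = -\dot x_1$ and $\dot x_2^3 = \cosh\theta^3 = \cosh\theta = \dot x_2$. Integrating from $0$ and using $x_j(0) = 0$ yields $x_1^3(t) = -x_1(t)$ and $x_2^3(t) = x_2(t)$ immediately.

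For the two time-reversing symmetries I would use the substitution $r = T - s$. For $\varepsilon^1$ we have $\theta^1(t) = \theta(T-t)$, so
$$x_1^1(t) = \int_0^t \big(-\sinh\theta(T-s)\big)\,ds = \int_{T-t}^T \big(-\sinh\theta(r)\big)\,dr = \int_{T-t}^T \dot x_1(r)\,dr = x_1(T) - x_1(T-t),$$
and the analogous computation with $\cosh$ gives $x_2^1(t) = x_2(T) - x_2(T-t)$. For $\varepsilon^2$ we have $\theta^2(t) = -\theta(T-t)$, which flips the sign inside $\sinh$ only; this produces $x_1^2(t) = x_1(T-t) - x_1(T)$, while the even function $\cosh$ leaves $x_2^2$ identical to $x_2^1$.

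There is no genuine obstacle here; the statement is a routine bookkeeping computation. The one point requiring care is the interaction of the time reversal $r = T - s$ with the parities of $\sinh$ and $\cosh$: the even function $\cosh$ forces $x_2^1$ and $x_2^2$ to coincide, whereas the odd function $\sinh$ distinguishes $x_1^1$ from $x_1^2$ by a single sign. I would therefore present the calculation for one representative symmetry (say $\varepsilon^1$) in full, as above, and note that the remaining cases follow by the same change of variables together with the parity relations.
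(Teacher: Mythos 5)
Your proof is correct and follows exactly the paper's own route: direct integration of $\dot x_1=-\sinh\theta$, $\dot x_2=\cosh\theta$ with the substitution $r=T-s$ for the time-reversing symmetries and the parities of $\sinh$ and $\cosh$ for the sign bookkeeping. The paper likewise computes only the representative case $\varepsilon^1$ in full and leaves the others to the same argument, so there is nothing to add.
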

\begin{proof}
Using direct integration, for instance, for $\varepsilon^1$ we verify that
\begin{align*}
&x_1^1(t) = \int_0^t \big(-\sinh \theta(T-s)\big) d s = -\int_{T-t}^{T} \big(-\sinh \theta(r)\big) d r = x_1(T) - x_1(T-t), \\
&x_2^1(t) = \int_0^t \cosh \theta (T-s) d s = -\int_{T-t}^{T} \cosh \theta(r) d r = x_2(T)  - x_2 (T-t).
\end{align*}
\end{proof}

\begin{lemma}\label{lemend} The symmetries $\varepsilon^i$ maps endpoints of geodesics $q = \big(x_1, x_2, y, z \big)$ to endpoints of $q^i = \big(x_1^i, x_2^i, y^i , z^i\big)$ in the following way:\
\begin{align*}
& x_1^1(T) = x_1(T),  &&x_2^1(T) = x_2(T), &&y^1(T) = -y(T), &&z^1(T) = z(T)-x_1(T) y(T), \\
& x_1^2(T) = -x_1(T), &&x_2^2(T) = x_2(T), &&y^2(T) = y(T), &&z^2(T) = z(T)-x_1(T) y(T), \\
& x_1^3(T) = -x_1(T), &&x_2^3(T) = x_2(T), &&y^3(T) = -y(T), &&z^3(T) = z(T).
\end{align*}
\end{lemma}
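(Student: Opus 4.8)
The plan is to follow the same route as the timelike endpoint computation in Lemma~\ref{lem2t}. The first two columns require no real work: Lemma~\ref{lemx12} already gives $x_1^i(t)$ and $x_2^i(t)$ for all $t$, and evaluating at $t=T$ together with the normalization $x_1(0)=x_2(0)=0$ yields, e.g., $x_1^1(T)=x_1(T)-x_1(0)=x_1(T)$ and $x_1^2(T)=x_1(0)-x_1(T)=-x_1(T)$, and similarly for the remaining entries. The content of the lemma is therefore in the formulas for $y^i(T)$ and $z^i(T)$, which I would obtain by integrating the defining equations (\ref{doty}) and (\ref{dotz}) after substituting the symmetry relations for $\theta^i$ and the expressions from Lemma~\ref{lemx12}.

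For the $y^i(T)$ entries I would integrate $\dot{y}^i=\tfrac12\big(x_2^i\sinh\theta^i+x_1^i\cosh\theta^i\big)$. Writing $\sinh\theta=-\dot{x}_1$ and $\cosh\theta=\dot{x}_2$, and for $i=1,2$ performing the change of variable $r=T-t$, the integral collapses onto the three elementary quantities $\int_0^T\sinh\theta\,dt=-x_1(T)$, $\int_0^T\cosh\theta\,dt=x_2(T)$ and $\int_0^T\dot{y}\,dt=y(T)$; after the boundary terms cancel this produces $y^1(T)=-y(T)$ and $y^2(T)=y(T)$. The reflection $\varepsilon^3$ carries no time reversal and satisfies $\theta^3=-\theta$, so $y^3(T)=-y(T)$ drops out immediately.

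The $z^i(T)$ entries are the real bookkeeping step. I would write $\dot{z}^i=\tfrac12\big((x_1^i)^2+(x_2^i)^2\big)\dot{x}_2^i$ and peel off the perfect-cube part $\int_0^T\tfrac12(x_2^i)^2\dot{x}_2^i\,dt=\tfrac16\big(x_2^i(T)\big)^3$, which is clean because $x_2^i(0)=0$. The surviving term $\int_0^T\tfrac12(x_1^i)^2\dot{x}_2^i\,dt$ is treated, after the substitution $r=T-t$, by expanding $\big(x_1(T)-x_1(r)\big)^2$ and invoking the single nontrivial identity $\int_0^T x_1\dot{x}_2\,dt=\tfrac12 x_1(T)x_2(T)+y(T)$, itself a consequence of integration by parts applied to $x_1 x_2$ together with the definition of $y$. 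Two shortcuts shorten the work considerably: since $(x_1^2)^2=(x_1^1)^2$ and $x_2^2=x_2^1$, one gets $z^2(T)=z^1(T)$ for free, and since $x_1^3=-x_1$, $x_2^3=x_2$, the integrand for $z^3$ equals that for $z$, giving $z^3(T)=z(T)$ at once.

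The only place that demands care is the $z^1$ computation, where the $\tfrac12 x_1(T)^2 x_2(T)$ contributions from the expanded square must be collected so that they cancel against the cross term, leaving $z^1(T)=z(T)-x_1(T)y(T)$. I expect this cancellation to be the main (and essentially only) obstacle. It is worth emphasizing that every step uses only the structure of the Hamiltonian system (\ref{dotx1})--(\ref{dota}) and the initial conditions, not the explicit Jacobi-function parametrizations, so the argument is uniform over all the invariant strata $C_1,\dots,C_7$.
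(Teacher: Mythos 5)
Your proposal is correct and follows essentially the same route as the paper's proof: read off $x_1^i(T),x_2^i(T)$ from Lemma~\ref{lemx12}, then obtain $y^i(T)$ and $z^i(T)$ by direct integration with the substitution $r=T-t$, the perfect-cube split $\int_0^T\tfrac12 (x_2^i)^2\dot x_2^i\,dt=\tfrac16(x_2^i(T))^3$, and the identity $\int_0^T x_1\dot x_2\,dt=\tfrac12 x_1(T)x_2(T)+y(T)$. The only (harmless) difference is that you shortcut $z^2(T)=z^1(T)$ and $z^3(T)=z(T)$ by comparing integrands, whereas the paper simply works out the $\varepsilon^1$ case in full and leaves the others analogous.
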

\begin{proof}
Lemma~\ref{lemx12} gives us expressions for $x_1^i(T)$ and $x_2^i(T)$. The expressions for other variables are obtained by integration. For example, for $\varepsilon^1$ we have
\begin{align*}
y^1 (T) &= \int_0^T \frac{x_2^1(t) \sinh \theta^1 (t) + x_1^1 (t) \cosh \theta^1 (t)}{2} d t=\frac{x_2(T)}{2}\int_0^T \sin \theta(T-t) d t \\
&\qquad+ \frac{x_1(T)}{2}\int_0^T \cosh \theta(T-t) d t - \int_0^T \frac{x_2(s) \sinh \theta (s)  + x_1(s) \cosh \theta (s)}{2} d s  \\
	&= -\frac{x_2(T) x_1(T)}{2} + \frac{x_1(T) x_2(T)}{2} - y(T)= -y(T), \\
z^1(T) &= \int_0^T \frac{\big(x_1^1(t)\big)^2 + \big(x_2^1(t)\big)^2}{2}\dot{x}_2^1(t) d t = \frac{\big(x_2^1(T)\big)^3}{6}+ \int_0^T \frac{\big(x_1^1(t)\big)^2}{2} \dot{x}_2^1 (t) d t  \\
& = \frac{\big(x_2(T)\big)^3}{2} + \int_0^T \frac{\big(x_1(T)\big)^2 - 2 x_1 (T) x_1(T-t) + \big(x_1 (T-t)\big)^2}{2} \dot{x}_2 (T-t) d t \\
&= \frac{\big(x_2 (T)\big)^3}{6} + \frac{\big(x_1 (T)\big)^2 x_2 (T)}{2} - x_1 (T) \int_0^T x_1 (s) \dot{x}_2 (s) d s + z(T) - \frac{\big(x_2 (T)\big)^3}{6} \\
& = \frac{\big(x_1 (T)\big)^2 x_2 (T)}{2} - x_1 (T) \Big(y(T) + \frac{x_1(T)x_2(T)}{2}\Big) + z(T) = z(T) -  x_1 (T) y(T).
\end{align*}
\end{proof}

We define the  action of $\varepsilon^i$ in the preimage of exponential mapping $N$ by restricting action to the initial point of trajectory of the vertical subsystem:
\begin{align}
&\varepsilon^i: C \to C, \qquad \varepsilon^i (\theta, c, \alpha) = (\theta^i, c^i, \alpha^i), \label{actsym3}\\
&(\theta^1, c^1, \alpha^1) = (\theta, -c , -\alpha), \\
&(\theta^2, c^2, \alpha^2) = (-\theta, c, \alpha), \\
&(\theta^3, c^3, \alpha^3) = (-\theta, -c, -\alpha), \label{actsym4}
\end{align}
in the following way:
\begin{align*}
&\varepsilon^i(\lambda, t) = \big(\varepsilon^i \circ e^{t \vec{H}_v} (\lambda), t\big),  \qquad i = 1, 2, \\
&\varepsilon^3(\lambda, t) = \big(\varepsilon^3 (\lambda), t\big),
\end{align*}
where $\vec{H}_v = -c \frac{\partial}{\partial \theta} - \alpha \sinh \theta \frac{\partial}{\partial c} \in \mathrm{Vec}(C)$ is the vertical part of the Hamiltonian vector field.

We define the action of $\varepsilon^i$ in the image $M$ of the exponential mapping $M$ by restricting the action to endpoints of geodesics (see Lemma~\ref{lemend}):
\begin{align}
&\varepsilon^i : M \to M,\quad \varepsilon^i (q) = \varepsilon^i (x_1, x_2, y, z) = q^i = (x_1^i, x_2^i, y^i, z^i), \label{actsym1}\\
&(x_1^1, x_2^1, y^1, z^1) = (x_1,  \ x_2, \ -y, \ z - x_1 y), \\
&(x_1^2, x_2^2, y^2, z^2) = (-x_1, \ x_2, \ y, \ z - x_1 y), \\
&(x_1^3, x_2^3, y^3, z^3) = (-x_1, \ x_2, \ -y, \ z).\label{actsym2}
\end{align}

Since the actions of $\varepsilon^i$ in the domain $N$ and the image $M$ of the exponential map are induced by the actions of symmetries~(\ref{symact}) on trajectories of Hamiltonian system, we have the following result.

\begin{proposition}\label{prop2}
The mappings $\varepsilon^i, i= 1, 2, 3,$ are symmetries of the exponential mapping, i.e.,
\begin{align*}
&(\varepsilon^i \circ \Exp) (\theta,c,\alpha,t) = (\Exp \circ \varepsilon^i) (\theta, c, \alpha, t), \qquad (\theta, c, \alpha, t) \in N.
\end{align*}
\end{proposition}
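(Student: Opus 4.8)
The plan is to realize all three manifestations of $\varepsilon^i$ --- the action on curves of the vertical subsystem, the action (\ref{actsym3})--(\ref{actsym4}) in the preimage $N$, and the action (\ref{actsym1})--(\ref{actsym2}) in the image $M$ --- as restrictions of one and the same transformation (\ref{symact}) of full trajectories of the Hamiltonian system of PMP. Once this is done, the commutation identity $\varepsilon^i\circ\Exp=\Exp\circ\varepsilon^i$ reduces to reading off the initial costate and the terminal point of a transformed trajectory. Concretely, I would fix $(\lambda,T)\in N$ with $\lambda=(\theta(0),c(0),\alpha)$, let $\gamma=\{(\theta(s),c(s),\alpha,q(s))\mid s\in[0,T]\}$ be the corresponding Hamiltonian trajectory, so that $q(T)=\Exp(\lambda,T)$, and let $\gamma_i$ be its image under (\ref{symact}).

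The first step is to check that $\gamma_i$ is again a trajectory of the full Hamiltonian system, not merely of its vertical part. That the vertical component $(\theta^i,c^i,\alpha^i)$ solves the subsystem (\ref{dotth})--(\ref{dota}) was already observed. For the horizontal component it suffices to verify that $q^i(s)$ satisfies $\dot q^i=-\cosh\theta^i\,X_1(q^i)+\sinh\theta^i\,X_2(q^i)$: the $(x_1,x_2)$-equations are exactly the content of Lemma~\ref{lemx12}, and the $(y,z)$-equations follow by the same direct integration carried out in Lemma~\ref{lemend}, using the explicit frame (\ref{frame}) together with the transformation rules for $\theta^i$ and $c^i$.

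The second step is to identify the endpoints of $\gamma_i$. At $s=0$ the costate of $\gamma_i$ equals, for the time-preserving symmetry $\varepsilon^3$, simply $\varepsilon^3(\lambda)$, whereas for the time-reversing symmetries $\varepsilon^1,\varepsilon^2$ it equals $\varepsilon^i\big(e^{T\vec{H}_v}(\lambda)\big)$, since $\theta^i(0)$ and $c^i(0)$ are built from $\theta(T)$ and $c(T)$; this is precisely the definition of $\varepsilon^i(\lambda,T)$ in $N$. Hence $\gamma_i$ is the geodesic emanating from $\varepsilon^i(\lambda,T)$, and its terminal horizontal point is $\Exp\big(\varepsilon^i(\lambda,T)\big)=q^i(T)$. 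On the other hand, Lemma~\ref{lemend} gives $q^i(T)=\varepsilon^i\big(q(T)\big)$ in the sense of the image action (\ref{actsym1})--(\ref{actsym2}). Combining these,
\begin{align*}
\big(\Exp\circ\varepsilon^i\big)(\lambda,T)=\Exp\big(\varepsilon^i(\lambda,T)\big)=q^i(T)=\varepsilon^i\big(q(T)\big)=\big(\varepsilon^i\circ\Exp\big)(\lambda,T),
\end{align*}
which is the assertion; the argument runs entirely parallel to that of Proposition~\ref{prop1t} in the timelike case.

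The step I expect to be the main obstacle is the verification, in the first step, that the transformed horizontal curve $q^i$ is genuinely the geodesic issuing from the transformed costate --- in particular its $y$- and $z$-components, where the nilpotent but nonabelian structure of the Engel group enters through the quadratic and cubic nonlinearities in $\dot y$ and $\dot z$. This is exactly where the integration computations behind Lemmas~\ref{lemx12} and~\ref{lemend} do the substantive work; everything else is bookkeeping of signs and of the time reversal $s\mapsto T-s$.
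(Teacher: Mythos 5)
Your proposal is correct and follows essentially the same route as the paper, which justifies the proposition in one sentence by noting that the actions of $\varepsilon^i$ in $N$ and in $M$ are both induced by the action (\ref{symact}) on full trajectories of the Hamiltonian system; your write-up simply makes explicit the three ingredients the paper leaves implicit (the transformed curve is again a Hamiltonian trajectory starting at the origin, its initial costate is $\varepsilon^i(\lambda,T)$ by the definitions (\ref{actsym3})--(\ref{actsym4}), and its endpoint is $\varepsilon^i(q(T))$ by Lemma~\ref{lemend}). No gaps.
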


\subsection{Maxwell points of spacelike normal extremals}
In this section we compute the Maxwell points corresponding to the symmetries $\varepsilon^i$. On this basis we derive estimates for the cut time along extremals.

We define the Maxwell sets in the preimage of $\Exp$ corresponding to the symmetries $\varepsilon^i$:
\begin{align*}
\MAX^i = \big\{(\lambda,t) \in N ~ | ~ \lambda^i \neq \lambda, \ \Exp(\lambda^i, t) = \Exp(\lambda, t) \big\}, \quad \lambda = (\theta, c, \alpha), \ \lambda^i = \varepsilon^i(\lambda).
\end{align*}

It follows from Proposition~\ref{prop2} that the equality $\Exp (\lambda^i, t) = \Exp (\lambda, t)$ is equivalent to $\varepsilon^i\big(q(t)\big) = q(t)$. Therefore we get a description of fixed points of the symmetries $\varepsilon^i$ in the preimage of the exponential mapping.

\begin{lemma}~

\begin{enumerate}
\item $\varepsilon^1(q) = q ~ \Longleftrightarrow ~ y = 0$.
\item $\varepsilon^2(q) = q ~ \Longleftrightarrow ~ x_1 = 0$.
\item $\varepsilon^3(q) = q ~ \Longleftrightarrow ~ x_1^2 + y^2 = 0$.
\end{enumerate}
\end{lemma}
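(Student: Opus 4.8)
The plan is to proceed exactly as in the proof of Lemma~\ref{lemim} for the timelike case, namely by a direct componentwise comparison using the explicit formulas~(\ref{actsym1})--(\ref{actsym2}) for the action of each $\varepsilon^i$ on the image $M = \R^4$. Since each $\varepsilon^i$ acts there as an explicit polynomial involution of the coordinates $(x_1, x_2, y, z)$, the fixed-point condition $\varepsilon^i(q) = q$ is merely a finite system of scalar equations, and I would simply solve it in each of the three cases.

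First, for $\varepsilon^1$ I would write $q^1 = (x_1,\, x_2,\, -y,\, z - x_1 y)$ and set it equal to $q = (x_1, x_2, y, z)$. The first two coordinates match automatically; the $y$-coordinate gives $-y = y$, i.e.\ $y = 0$, and the $z$-coordinate gives $z - x_1 y = z$, i.e.\ $x_1 y = 0$. Once $y = 0$, the latter holds automatically, so the whole system is equivalent to $y = 0$, as claimed. Next, for $\varepsilon^2$ I would use $q^2 = (-x_1,\, x_2,\, y,\, z - x_1 y)$; comparing with $q$, the $x_1$-coordinate forces $x_1 = 0$, the $y$-coordinate is automatic, and the $z$-coordinate again reduces to $x_1 y = 0$, which $x_1 = 0$ already guarantees. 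Hence $\varepsilon^2(q) = q \Longleftrightarrow x_1 = 0$.

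Finally, for $\varepsilon^3$ I would use $q^3 = (-x_1,\, x_2,\, -y,\, z)$, whose fixed-point equations are $x_1 = 0$ and $y = 0$ simultaneously; over $\R$ this is precisely $x_1^2 + y^2 = 0$. This disposes of the third item.

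The computation is entirely routine, so there is no genuine obstacle; the only point requiring any care is to check that the terms $x_1 y$ appearing in $z^1$ and $z^2$ do not impose a constraint beyond those coming from the $x_1$- and $y$-coordinates. In each case they collapse to $x_1 y = 0$, which is implied by the simpler condition already obtained, so no extra equation survives. This consistency check is the only place where one must verify an implication rather than just read off an equality.
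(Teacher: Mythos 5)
Your proposal is correct and is exactly the paper's argument: the paper's proof simply says the lemma follows from the definition of the action of $\varepsilon^i$ on $M$, and your componentwise comparison (including the observation that the $z$-coordinate condition $x_1y=0$ is implied by the constraints from the other coordinates) is the computation being invoked. Nothing further is needed.
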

\begin{proof}
Follows from definition~(\ref{actsym1})--(\ref{actsym2}) of action of the symmetries in $M$.
\end{proof}

Now we compute fixed points of symmetries in the preimage of the exponential mapping, which is necessary for description of Maxwell sets. Introduce the following coordinates in the set $\cup_{i=1}^3 N_i, \ N_i = \{(\lambda, t) \in C_i \times \R_+ ~ | ~ t \in \big(0, \tsupr (\lambda)\big)\}$:
\begin{align}
&\tau = \frac{\psi_t + \psi_0}{2}, \qquad p = \frac{\psi_t-\psi_0}{2} = \frac{\ae t}{2}. \label{ptau2}
\end{align}
The parameter $\tau$ corresponds to the middle point of a trajectory.

\begin{lemma}~

\begin{enumerate}
\item $\lambda^1 = \lambda  ~ \Longleftrightarrow ~ \left\{\begin{array}{l}
			\sn \tau = 0, \text{ if } \lambda \in C_1 \cup C_2, \\
			\text{impossible, if } \lambda \in C_3 \cup C_4 \cup C_6,  \\
			c=0, \text{ if } \lambda \in C_5 \cup C_7. \end{array} \right.$
\item $\lambda^2 = \lambda ~ \Longleftrightarrow ~  \left\{\begin{array}{l}
			\cn \tau = 0, \text{ if } \lambda \in C_1, \\
			\text{impossible, if } \lambda \in C_2 \cup C_4, \\
			\sn \tau = 0, \text{ if } \lambda \in C_3, \\
			\theta = 0, \text{ if  } \lambda \in \bigcup_{i=5}^7 C_i.\end{array} \right.$
\item $\lambda^3 = \lambda ~ \Longleftrightarrow ~  \left\{\begin{array}{l}
			\text{impossible, if } \lambda \in (\bigcup_{i=1}^4 C_i) \cup C_6, \\
			\theta = c = 0 \text{, if } \lambda \in C_5 \cup C_7.\end{array} \right.$
\end{enumerate}
\end{lemma}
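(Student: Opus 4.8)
The plan is to turn the fixed-point condition $\lambda^i = \lambda$ into explicit scalar equations, using the definition of the action of $\varepsilon^i$ on the preimage together with the description of these symmetries on trajectories of the vertical subsystem, and then to verify them stratum by stratum on $C_1, \dots, C_7$ via the explicit parametrizations of $\theta$ and $c$ recorded above. I would first separate the time-preserving symmetry $\varepsilon^3$ from the time-reversing $\varepsilon^1, \varepsilon^2$. Since $\varepsilon^3$ does not involve the flow, $\varepsilon^3(\lambda, t) = (\varepsilon^3(\lambda), t)$, and its reflection sends $(\theta, c) \mapsto (-\theta, -c)$ while fixing $\alpha$, the condition $\lambda^3 = \lambda$ is simply $\theta = c = 0$, a requirement on the initial covector alone. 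For $\varepsilon^1$ and $\varepsilon^2$ one first flows by $e^{t\vec H_v}$ to $\lambda(t) = (\theta(t), c(t), \alpha)$ and then reflects, which gives $\lambda^1 = \lambda \Longleftrightarrow \theta(t) = \theta(0),\ c(t) = -c(0)$ and $\lambda^2 = \lambda \Longleftrightarrow \theta(t) = -\theta(0),\ c(t) = c(0)$.

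The key observation I would isolate is a midpoint reformulation: each of these pairs of equations says exactly that the midpoint of the vertical trajectory, the point with argument $\psi = \tau$ (equivalently time $t/2$), is a fixed point of the reflection of the $(\theta, c)$-plane carried by the symmetry. Thus $\lambda^1 = \lambda$ is equivalent to $c = 0$ at the midpoint, and $\lambda^2 = \lambda$ to $\theta = 0$ (that is $\sinh\theta = 0$) at the midpoint. This reduces every case to reading off the midpoint value of a single coordinate.

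On $C_1, C_2, C_3$ I would substitute the Jacobi parametrizations, pass to the coordinates $\tau, p$ of (\ref{ptau2}), and invoke the standard addition formulas, in which $\sn(\tau+p) + \sn(\tau-p)$ and $\sn(\tau+p) - \sn(\tau-p)$ factor through $\sn\tau \cn p \dn p$ and $\cn\tau \dn\tau \sn p$ over the positive denominator $1 - k^2 \sn^2\tau \sn^2 p$. Because $p = \ae t/2 \in (0, \K)$ keeps $\sn p \neq 0$ and $\dn\tau > 0$ for $k \in (0,1)$, the midpoint conditions collapse to $\sn\tau = 0$ or $\cn\tau = 0$, yielding the stated entries. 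For the ``impossible'' entries on $C_3$, $C_4$ and on $C_6$ for $\varepsilon^1$ I would note that $c$ keeps a constant sign along the whole trajectory, since $\cn\psi, \dn\psi > 0$ on $(-\K, \K)$, respectively $c \equiv \const \neq 0$ when $\alpha = 0$, so $c(t) = -c(0)$ has no solution. The elementary strata $C_5, C_6, C_7$ are then dispatched directly: $C_5$ has $\theta \equiv c \equiv 0$, so all three symmetries fix it; $C_7$ has $c \equiv 0$ and constant $\theta$; and $C_6$ has $\alpha = 0$, $c \equiv \const \neq 0$ with $\theta$ linear in $t$, from which the conditions $\theta = 0$, $c = 0$, $\theta = c = 0$ follow by inspection.

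The main difficulty, I expect, lies not in the elliptic-function manipulations, which become routine once the midpoint reformulation is set up, but in the bookkeeping of compatibility between the fixed-point conditions and the defining (in)equalities of each stratum. For instance $\theta = c = 0$ forces $E = \frac{c^2}{2} - \alpha \cosh\theta = -\alpha$, which is incompatible with the strict inequalities cutting out $C_1, C_2, C_3$ and with $c \neq 0$ on $C_4$, but holds identically on $C_5$; this is exactly what makes $\lambda^3 = \lambda$ impossible outside $C_5 \cup C_7$. One must likewise track the sign branches $\sgn\theta, \sgn c$ present in the parametrizations of $C_2, C_3, C_4$ and the degenerate behavior as $p \to \K$, so that each claimed equivalence is established in both directions.
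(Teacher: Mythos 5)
Your proposal is correct and follows the same route the paper intends but leaves unstated: the paper's proof is a one-line reference to the definitions of $\varepsilon^i$ and of the coordinate $\tau$, while you supply the actual content, namely that for the time-reversing symmetries the fixed-point condition is equivalent (by reversibility and uniqueness of solutions of the vertical subsystem) to $c=0$, respectively $\theta=0$, at the midpoint $\psi=\tau$, after which each stratum is read off from the explicit parametrizations. Your compatibility check $\theta=c=0\Rightarrow E=-\alpha$ for the $\varepsilon^3$ case, and the observation that $c$ (resp.\ $\sinh\theta$) never vanishes on the strata marked ``impossible'', are exactly the missing verifications; note only that you are implicitly (and correctly) using the $\alpha$-preserving form of the reflections from the trajectory-level definition rather than the sign-flipped $\alpha^1,\alpha^3$ printed in the displayed preimage formulas, which would otherwise contradict the lemma on $C_5$.
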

\begin{proof}
Follows from definition of action of the symmetries in the domain of the exponential mapping~(\ref{actsym3})--(\ref{actsym4}) and definition~(\ref{ptau2}) of the coordinate $\tau$.
\end{proof}

The equations $\varepsilon^i (q) = q$ define sub-manifolds in $\R^4$ with dimension from 3 to 2, the exponential mapping transforms the corresponding Maxwell sets into these sub-manifolds:
\begin{align*}
\Exp (\MAX^i) \subset \{ q \in \R^4 ~ | ~ \varepsilon^i (q) = q\}\quad i=1,2,3.
\end{align*}

Further we describe in detail the sets $\MAX^1, \MAX^2$ since they define sub-manifolds in $M$ with the maximum dimension 3, while $\MAX^3$ defines a sub-manifold with dimension 2. We investigate roots of the equations $x_1=0, y=0$ along a geodesic. Using   coordinates~(\ref{ptau2}) for the case $\lambda \in C_1$ we get
\begin{align*}
x_1 &= \frac{2 \sqrt{2 (\alpha + E)} \cn \tau \dn \tau \sn p}{\alpha (1 - k^2 \sn^2 p \sn^2 \tau)}, \\
y &= \frac{2 \sqrt{2 (\alpha + E)} \sn \tau f_1(p)}{\alpha^2 \ae (1 - k^2 \sn^2 p \sn^2 \tau)},\\
f_1 (p) &= \alpha p \cn p \dn p + \ae^2 \big(2 \cn p \dn p \E (p) - (1 + k^2) \sn p + 2 k^2 \sn^3 p\big).
\end{align*}

This gives us description of the Maxwell sets in $N$:
\begin{align}
\MAX^1 \cap N_1 &= \{\nu \in N_1 \mid y = 0, \sn \tau \neq 0\} =\{\nu  \in N_1  \mid f_1(p)=0\},\\
\MAX^2 \cap N_1 &= \{\nu \in N_1 \mid x_1 = 0, \cn \tau \neq 0\} = \{\nu \in N_1  \mid \sn p=0\},
\end{align}
where $\nu = (\tau, p, E, \alpha)$.

Let $p_1^i, i \in \N$, denote a positive root of equation $f_1(p)=0$, s.t. $p_1^1< p_1^2 < \dots < p_1^i<\cdots $. We have $t_{\MAX^1} (\lambda) = p_1^1$, $t_{\MAX^2} (\lambda)= 2 \K$. There is a conjecture (this conjecture is supported numerically) that $t_{\MAX^2} (\lambda) < t_{\MAX^1} (\lambda)$. This means that each geodesic from $C_1$ meets the set $\MAX^2$ first.

If $\lambda \in C_2$, then
\begin{align}
x_1 &= -\frac{2 \sgn \theta \sqrt{2} \sqrt{-\alpha - E} \dn \tau \cn p \sn p}{\alpha (\cn^2 \tau - \sn^2 p \dn^2 \tau)}, \nonumber\\
y &= \frac{2 \sgn \theta \sqrt{2} \sqrt{-\alpha - E} \cn \tau \sn \tau f_2(p) }{\alpha^2 \ae (\cn^2 \tau - \sn^2 p \dn^2 \tau)}, \nonumber\\
f_2(p) &= -E (p) \dn p - \ae^2 \big(2 \dn p \E (p) - k^2 \cn p \sn p\big) \label{fy2}
\end{align}

We have
\begin{align}
\MAX^1 \cap N_2  &= \{\nu \in N_2 \mid y = 0, \sn \tau \neq 0, \tau \in (-\K,\K), p \in (0,\K)\} \nonumber\\
&\subset \{\nu  \in N_2 \mid f_2(p)=0,  p \in (0,\K) \}, \label{max12}\\
\MAX^2 \cap N_2 &= \{\nu \in N_2 \mid x_1 = 0, \tau \in (-\K,\K), p \in (0,\K)\} = \emptyset,
\end{align}
where $\nu = (\tau, p, E, \alpha)$.

\begin{lemma} \label{lemc2}
The function $f_2(p)>0$ for $p \in \big(0,\K(k)\big), k \in (0,1)$.
\end{lemma}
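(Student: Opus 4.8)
The plan is to apply the comparison-function Lemma~\ref{lem4}, following the same scheme used for the timelike case in Lemma~\ref{lem5t}. First I would use the defining relations of the chart on $C_2$, namely $\ae = \sqrt{(\alpha-E)/2}$ and $k^2 = 2\alpha/(\alpha-E)$, which give the two identities $\ae^2 k^2 = \alpha$ and $E = \ae^2(k^2-2)$. Substituting the latter into the expression~(\ref{fy2}) for $f_2$ factors out the positive constant $\ae^2$ and reduces the claim to proving
\begin{equation*}
F(p) = (2-k^2)\,p\,\dn p - 2\,\dn p\,\E(p) + k^2 \sn p\,\cn p > 0, \qquad p \in \big(0, \K(k)\big),
\end{equation*}
since $f_2 = \ae^2 F$ with $\ae^2 > 0$.

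Next I would differentiate $F$, using $\tfrac{d}{dp}\dn p = -k^2 \sn p\,\cn p$, $\tfrac{d}{dp}\E(p) = \dn^2 p$, and the usual rules for $\sn$ and $\cn$. The decisive simplification is that the purely $\dn$-dependent terms, including the $-2\dn^3 p$ coming from $\E' = \dn^2$, cancel completely once one substitutes $\cn^2 p = 1 - \sn^2 p$ and $1 - k^2 \sn^2 p = \dn^2 p$; what survives is the compact form
\begin{equation*}
F'(p) = -k^2 \sn p\,\cn p\,C(p), \qquad C(p) = (2-k^2)\,p - 2\,\E(p).
\end{equation*}
This shows that $F$ obeys the linear ODE $F' + \tfrac{k^2 \sn p\,\cn p}{\dn p}\,F = \tfrac{k^4 \sn^2 p\,\cn^2 p}{\dn p}$, whose integrating factor is $1/\dn p$. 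Spotting this structure is the heart of the argument, because it forces the choice of comparison function $g(p) = \dn p$.

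With $g = \dn p$ I would check the three hypotheses of Lemma~\ref{lem4}. Positivity $\dn p > 0$ on $(0,\K)$ is clear, and either a direct quotient-rule computation or the integrating-factor identity gives
\begin{equation*}
\bigg(\frac{F(p)}{\dn p}\bigg)' = \frac{k^4 \sn^2 p\,\cn^2 p}{\dn^2 p} \geq 0,
\end{equation*}
which is condition~(\ref{cond1}); condition~(\ref{cond2}) holds since $\lim_{p\to 0} F(p)/\dn p = F(0) = 0$. Finally, $F \not\equiv 0$ follows from the expansion $F(p) = \tfrac{k^4}{3}p^3 + o(p^3)$, equivalently $f_2(p) = \tfrac{\alpha k^2}{3}p^3 + o(p^3)$, which is positive for small $p$ because $\alpha > 0$ on $C_2$. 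Lemma~\ref{lem4} then yields $F(p) > 0$, whence $f_2(p) = \ae^2 F(p) > 0$ for $p \in \big(0, \K(k)\big)$.

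I expect the only genuinely nontrivial step to be the identification of the comparison function $g = \dn p$; once the first-order linear ODE for $F$ is recognized, everything else — the differentiation, the cancellation of the $\dn^3 p$ terms, and the leading Taylor coefficient — is routine. The expansion $f_2(p) = \tfrac{\alpha k^2}{3}p^3 + o(p^3)$ also provides a useful consistency check on the reduction $f_2 = \ae^2 F$ and on the sign, since $\alpha k^2/3 > 0$.
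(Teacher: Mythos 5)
Your proof is correct and takes essentially the same approach as the paper: both apply the comparison-function Lemma~\ref{lem4} with $g(p) = \dn p$, verify $\big(f_2(p)/\dn p\big)' \geq 0$ (your $\ae^2 k^4 \sn^2 p \cn^2 p / \dn^2 p$ agrees with the paper's $\alpha^2 \sn^2 p \cn^2 p /(\ae^2 \dn^2 p)$ since $\alpha = \ae^2 k^2$ on $C_2$), and use the expansion $f_2(p) = \tfrac{1}{3}\alpha k^2 p^3 + o(p^3)$. Your normalization $f_2 = \ae^2 F$ and the integrating-factor derivation are simply a more detailed presentation of the identical computation.
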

\begin{proof}
We show that $g(p) = \dn p$ is a comparison function for $f_2(p)$ for $p \in (0,\K)$.

The inequality $f_2(p)\not \equiv 0$ follows from the expansion $f_2(p) = \frac{1}{3}\alpha k^2 p^3 + o(p^3)$.
Notice that $g(p) > 0$ for $p \in (0, \K)$. Finally we get the equalities
\begin{align*}
& \bigg(\frac{f_2(p)}{g(p)}\bigg)' = \frac{\alpha^2 \sn^2 p \cn^2 p} {\ae^2 \dn^2 p}, \qquad \frac{f_2(p)}{g(p)} = \frac{1}{3}\alpha k^2 p^3 + o(p^3).
\end{align*}

So $g(p)$ is a comparison function for $f_2 (p)$; thus, it follows from Lemma~\ref{lem4} that $f_2 (p) > 0$ for $p \in  (0, \K)$.
\end{proof}

\begin{proposition} \label{propc2}
$\MAX^1 \cap N_2 = \emptyset$.
\end{proposition}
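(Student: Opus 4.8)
The plan is to obtain the conclusion as an immediate consequence of the two ingredients already assembled: the set-theoretic description of $\MAX^1 \cap N_2$ recorded in~(\ref{max12}), and the strict positivity of $f_2$ established in Lemma~\ref{lemc2}. The structure mirrors exactly the timelike argument that produced $\MAX^2 \cap N^\pm = \emptyset$ from the description~(\ref{max2t}) together with Lemma~\ref{lem5t}, so no new machinery is needed.

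First I would recall why the inclusion in~(\ref{max12}) holds. By the preceding lemma, a point of $\MAX^1 \cap N_2$ is characterized by $y = 0$ with $\sn \tau \neq 0$, and along a geodesic with $\lambda \in C_2$ the coordinate $y$ factors as $y = \frac{2 \sgn \theta \sqrt{2}\sqrt{-\alpha-E}\,\cn\tau}{\alpha^2 \ae(\cn^2\tau - \sn^2 p\,\dn^2\tau)}\,\sn\tau\, f_2(p)$. On $N_2$ one has $\tau \in (-\K,\K)$, so $\cn\tau \neq 0$, while $\sqrt{-\alpha-E}>0$ (since $E<-\alpha$ on $C_2$), $\sgn\theta = \pm 1$, and the denominator is nonzero; hence the displayed prefactor cannot vanish. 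Combined with $\sn\tau \neq 0$, the condition $y=0$ forces $f_2(p)=0$, which is precisely the inclusion $\MAX^1 \cap N_2 \subset \{\nu \in N_2 \mid f_2(p)=0,\ p \in (0,\K)\}$. Next I would invoke Lemma~\ref{lemc2}, which asserts $f_2(p) > 0$ for every $p \in (0,\K(k))$ and $k\in(0,1)$. Since the admissible time parameter on $N_2$ satisfies $p = \ae t/2 \in (0,\K)$ by the definition~(\ref{ptau2}) and the bound~(\ref{supr2}), the equation $f_2(p)=0$ has no admissible root, so the superset in the inclusion is empty, whence $\MAX^1 \cap N_2 = \emptyset$.

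The genuine obstacle is not in this proposition but upstream, in Lemma~\ref{lemc2}: proving $f_2>0$ requires choosing the comparison function $g(p)=\dn p$ and checking the monotonicity and limit hypotheses~(\ref{cond1})--(\ref{cond2}) of Lemma~\ref{lem4} via the derivative identity $(f_2/g)' = \alpha^2 \sn^2 p\,\cn^2 p/(\ae^2 \dn^2 p) \geq 0$ together with the expansion $f_2(p) = \tfrac13 \alpha k^2 p^3 + o(p^3)$. Once that positivity is granted, the present statement demands no further computation and is a one-line corollary.
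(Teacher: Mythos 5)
Your argument is correct and coincides with the paper's own proof, which likewise deduces the emptiness of $\MAX^1 \cap N_2$ directly from the inclusion~(\ref{max12}) and the positivity of $f_2$ in Lemma~\ref{lemc2}. The extra detail you supply about the nonvanishing of the prefactor of $f_2(p)$ in the expression for $y$ (using $\cn\tau\neq 0$ for $\tau\in(-\K,\K)$) is a harmless elaboration of why~(\ref{max12}) holds; the paper simply records that inclusion and cites it.
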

\begin{proof}
Follows immediately from Lemma~\ref{lemc2} and (\ref{max12}).
\end{proof}

If $\lambda \in C_3$, then
\begin{align}
x_1 &= \frac{4 \sgn c \sn p \sn \tau}{\ae (\cn^2 \tau - \dn^2 \tau \sn^2 p)}, \nonumber\\
y &= -\frac{4 \sgn c \cn \tau \dn \tau f_3(p)}{\ae^2 (1 - k_2)^2 (\cn^2 \tau - \dn^2 \tau \sn^2 p)}, \nonumber\\
f_3(p) &= \cn p \dn p \big((1 - k_2) p - 2 \E (p) \big) + (1 + k_2) \sn p - 2 k_2 \sn^3 p. \label{fy3}
\end{align}

We have
\begin{align}
\MAX^1 \cap N_3  &= \{\nu \in N_3 \mid y = 0, \tau \in (-\K,\K), p \in (0,\K)\} \nonumber\\
&=\{\nu  \in N_3 \mid f_3(p)=0, p \in (0,\K) \}, \label{max13}\\
\MAX^2 \cap N_3  &= \{\nu \in N_3 \mid x_1 = 0, \sn \tau \neq 0, \tau \in (-\K,\K), p \in (0,\K)\} = \emptyset\nonumber,
\end{align}
where $\nu = (\tau, p, E, \alpha)$.

\begin{lemma} \label{lemc3}
The function $f_3(p)>0$ for $p \in (0,\K)$.
\end{lemma}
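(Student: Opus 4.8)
The plan is to prove Lemma~\ref{lemc3} exactly as Lemmas~\ref{lem5t} and~\ref{lemc2} were proved, namely by exhibiting a comparison function $g(p)$ and invoking Lemma~\ref{lem4}. Thus I must find a smooth $g$ with $g(p)>0$ on $(0,\K)$, verify $f_3 \not\equiv 0$, check that $\lim_{p\to 0} f_3(p)/g(p) = 0$, and show $\big(f_3(p)/g(p)\big)' \geq 0$ on $(0,\K)$; Lemma~\ref{lem4} then yields $f_3(p)>0$.

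The choice of $g$ is dictated by the structure of $f_3$. Writing $B(p) = (1-k_2)p - 2\E(p)$, we have $f_3(p) = \cn p \dn p \, B(p) + (1+k_2)\sn p - 2 k_2 \sn^3 p$, so the only non-elliptic ingredient enters through the factor $\cn p \dn p$ multiplying $B$. This suggests taking $g(p) = \cn p \dn p$, which is strictly positive on $(0,\K)$ and tends to $1$ as $p\to 0$. With this choice $f_3/g = B + \big((1+k_2)\sn p - 2k_2\sn^3 p\big)/(\cn p \dn p)$, and the hope is that upon differentiation the transcendental contribution $B' = (1-k_2) - 2\dn^2 p$ recombines with the derivative of the rational part to produce a manifestly nonnegative expression.

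Concretely, I would first record the Taylor expansion $f_3(p) = \tfrac{(1-k_2)^2}{3} p^3 + o(p^3)$, obtained from $\sn p = p - \tfrac{1+k_2}{6}p^3 + o(p^3)$, $\cn p \dn p = 1 - \tfrac{1+k_2}{2}p^2 + o(p^2)$, and $\E(p) = p - \tfrac{k_2}{3}p^3 + o(p^3)$. This simultaneously gives $f_3 \not\equiv 0$ (the leading coefficient is positive since $k_2 < 1$) and, after dividing by $g \to 1$, the limit condition $\lim_{p\to 0} f_3/g = 0$. Next I would differentiate $f_3/g$ using $\sn' = \cn \dn$, $\cn' = -\sn \dn$, $\dn' = -k_2 \sn \cn$, $\E' = \dn^2$, and reduce everything to a rational function of $s = \sn^2 p$ with the aid of $\dn^2 p = 1 - k_2 s$ and $\cn^2 p = 1 - s$. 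The expected outcome of this computation is the clean identity
\begin{align*}
\bigg(\frac{f_3(p)}{g(p)}\bigg)' = \frac{(1-k_2)^2 \sn^2 p}{\cn^2 p \, \dn^2 p},
\end{align*}
whose right-hand side is $\geq 0$ on $(0,\K)$ because $k_2<1$. All four hypotheses of Lemma~\ref{lem4} then hold and the conclusion follows.

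The main obstacle is the derivative computation itself: the numerator of $\big(f_3/g\big)'$, taken over the common denominator $\cn^2 p\,\dn^2 p$, is a cubic polynomial in $s = \sn^2 p$ built from $B'\cdot \cn^2 p\,\dn^2 p$ and the derivative of the rational part, and one must check that its constant, quadratic, and cubic coefficients all vanish, leaving only the linear term $(1-k_2)^2 s$. This is precisely where the cancellation of the transcendental $B$ occurs, so careful bookkeeping is essential. A secondary point to address is that in $C_3$ the modulus satisfies $k_2 \in (-\infty, 1)$ rather than $k_2 \in (0,1)$; however the derivative identity above is a formal algebraic identity in the Jacobi functions valid for all $k_2$, and $(1-k_2)^2 > 0$ whenever $k_2 \neq 1$, so the argument goes through unchanged, with $\cn^2 p \, \dn^2 p > 0$ on $(0,\K)$ guaranteeing positivity of the denominator.
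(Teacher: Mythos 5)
Your proposal matches the paper's proof essentially verbatim: the same comparison function $g(p)=\cn p \dn p$, the same derivative identity $\big(f_3/g\big)' = (1-k_2)^2\sn^2 p/(\cn^2 p\,\dn^2 p)$, and the same appeal to Lemma~\ref{lem4}, with your leading coefficient $\tfrac{(1-k_2)^2}{3}$ agreeing with the paper's $\tfrac{\alpha^2}{3\ae^4}$ since $1-k_2=\alpha/\ae^2$ on $C_3$. The only cosmetic difference is that you justify the case $k_2<0$ by noting the identity is formal in the modulus, whereas the paper treats it in a separate remark via the imaginary-modulus transformation; both are fine.
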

\begin{proof}
We show that $g(p) = \cn p \dn p$ is a comparison function for $f_3(p)$ for $p \in (0,\K)$.

The inequality $f_3(p)\not \equiv 0$ follows from the expansion $\ds f_3(p) = \frac{\alpha^2}{3 \ae^4} p^3+ o(p^3)$.
Notice that $g(p) > 0$ for $p \in (0, \K)$. Finally we get the equalities
\begin{align*}
& \bigg(\frac{f_3(p)}{g(p)}\bigg)' = \frac{(1 - k_2)^2 \sn^2 p}{\cn^2 p \dn^2 p}, \qquad \frac{f_2(p)}{g(p)} = \frac{\alpha^2}{3 \ae^4} p^3+ o(p^3).
\end{align*}

So $g(p)$ is a comparison function for $f_3 (p)$; thus, it follows from Lemma~\ref{lem4} that $f_3 (p) > 0$ for $p \in  (0, \K)$.
\end{proof}
\begin{remark}
Notice that if $k^2 = k_2 < 0$, then Lemma $\ref{lemc3}$ is valid. We have the following formulas:
\begin{align*}
x_1& = \frac{2 \sqrt{2} \sqrt{\alpha + E} \sgn c \dn \tau \sn \tau \dn p \sn p}{\alpha (\cn^2 \tau - \dn^2 \tau \sn^2 p)}, \\
y &= \frac{2 \sqrt{2} \cn \tau \sqrt{\alpha + E} \sgn c f_4 (p)}{\alpha \ae (\cn^2 \tau - \dn^2 \tau \sn^2 p)},  \qquad \ae=\hat{\ae}=\sqrt{\alpha}, \\
f_4 (p)&= 2 \cn p \E (p) - p \cn p - \dn p \sn p, \qquad k = \hat{k} = \sqrt{\frac{\alpha - E}{2 \alpha}}.
\end{align*}
One can easily prove that $g(p) = \cn p$ is a comparison function for $f_4(p)$ for $p \in (0,\K)$.
\end{remark}
\begin{proposition}
$\MAX^1 \cap N_3 = \emptyset$.
\end{proposition}
\begin{proof}
Follows immediately from Lemma~\ref{lemc3} and (\ref{max13}).
\end{proof}
We found the following upper bound for the cut time along general extremal with $\lambda \in \bigcup_{i=1}^3 C_i$:
\begin{theorem}
\begin{align}
&\lambda \in C_1 ~ &&\Longrightarrow ~ &&\tcut (\lambda) \leq  2 \K(k). \\
&\lambda \in C_2 \cup C_3 ~ &&\Longrightarrow ~ &&\tcut (\lambda) \leq  \frac{\K(k) - \psi_0}{\ae}.
\end{align}
\end{theorem}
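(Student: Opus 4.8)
The plan is to derive both bounds from the optimality principle recalled earlier: a geodesic ceases to be optimal once it passes a Maxwell point, so $\tcut(\lambda)$ is bounded above by the first positive time at which the geodesic $q(t) = \Exp(\lambda, t)$ meets one of the Maxwell sets $\MAX^i$. The regimes $C_1$ and $C_2 \cup C_3$ are governed by opposite mechanisms, and I would treat them separately.

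For $\lambda \in C_1$ I would invoke the reflection $\varepsilon^2$. We have already identified $\MAX^2 \cap N_1 = \{\nu \in N_1 \mid \sn p = 0\}$, and the first positive instant at which $x_1(t) = 0$ while $\cn \tau \neq 0$ occurs at $t_{\MAX^2}(\lambda) = 2\K(k)$. At that instant $\varepsilon^2$ carries the geodesic to a distinct geodesic with the same endpoint, so the point is a genuine Maxwell point and optimality is lost; hence $\tcut(\lambda) \le t_{\MAX^2}(\lambda) = 2\K(k)$. Notice that the conjectured inequality $t_{\MAX^2}(\lambda) < t_{\MAX^1}(\lambda)$ is \emph{not} needed for this upper bound: the single symmetry $\varepsilon^2$ already supplies it.

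For $\lambda \in C_2 \cup C_3$ the argument is of the opposite character. By Proposition~\ref{propc2} (and its analogue for $C_3$) we have $\MAX^1 \cap N_i = \emptyset$; moreover $\MAX^2 \cap N_i = \emptyset$, and since $\varepsilon^3(q) = q$ forces $x_1 = y = 0$, the set $\MAX^3 \cap N_i$ is empty as well. Thus there are no Maxwell points in the interior of the domain, and the Maxwell mechanism yields nothing. Instead the bound comes from the domain of definition of the exponential map: the coordinates require $\psi_t \in (-\K, \K)$, and the formulas for $q(t)$ explode as $t \to \tsupr(\lambda)$, so the geodesic is defined only on $\big(0, \tsupr(\lambda)\big)$. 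Consequently the supremum of times of optimality cannot exceed $\tsupr(\lambda)$, giving $\tcut(\lambda) \le \tsupr(\lambda) = \frac{\K(k) - \psi_0}{\ae}$.

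The delicate step is the $C_1$ case: I must confirm that the zero $\sn p = 0$ at $p = 2\K$ really produces a Maxwell point rather than a mere fixed point of $\varepsilon^2$ in the preimage, i.e.\ that $\cn \tau \neq 0$ (equivalently $\lambda^2 \neq \lambda$) precisely at that parameter value, handling the exceptional covectors with $\cn \tau = 0$ by a separate continuity or direct-comparison argument. The $C_2 \cup C_3$ case is then routine, resting only on the explosion at $\tsupr$ and the already-established emptiness of the Maxwell sets.
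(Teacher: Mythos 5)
Your proposal is correct and follows essentially the same route as the paper: the $C_1$ bound comes from the first Maxwell time of the symmetry $\varepsilon^2$, located at the first positive root of $\sn p = 0$, while the $C_2 \cup C_3$ bound is simply $\tsupr(\lambda) = \big(\K(k)-\psi_0\big)/\ae$, beyond which the extremal is not even defined (the emptiness of $\MAX^1,\MAX^2$ there is needed only for sharpness, not for the upper bound, as you note). You are in fact slightly more careful than the paper in flagging the exceptional covectors with $\cn \tau = 0$ (equivalently $\cn \psi_0 = 0$), for which $\lambda^2 = \lambda$ and the Maxwell argument requires a limiting or conjugate-point supplement that the paper silently omits.
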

There is a conjecture that this bound is sharp.

\section{Conclusion}
In this paper we consider extremals in the Engel group with a sub-Lorentzian metric. For this problem

$\bullet$ we prove that timelike normal extremals are locally maximizing;

$\bullet$ we calculate the timelike, spacelike and lightlike normal extremal trajectories;

$\bullet$ we  describe the symmetries of the exponential map and the corresponding
Maxwell points of timelike, spacelike normal extremals;

$\bullet$  we  obtain an upper estimate for the cut time on extremal trajectories.

In the future we are intending to investigate the optimality of extremal trajectories, relying on the method of nonlinear
approximation we shall also apply our results to consider general
non-linear systems with 2-dimensional control in a 4-dimensional space with a sub-Lorentzian metric.

\end{document}